\newcommand{\MH}[1]{\textcolor[rgb]{.8,.3,1}{MH: #1}}
\renewcommand{\a}{\alpha}
\newcommand{\Q}{\mathbb{Q}}
\tikzstyle directed=[postaction={decorate,decoration={markings,
    mark=at position #1 with {\arrow{>}}}}]
\newcommand{\hackcenter}[1]{
 \xy (0,0)*{#1}; \endxy}
\tikzset{->-/.style={decoration={
  markings,
  mark=at position #1 with {\arrow{>}}},postaction={decorate}}}
\tikzset{middlearrow/.style={
        decoration={markings,
            mark= at position 0.5 with {\arrow{#1}} ,
        },
        postaction={decorate}
    }
}
\theoremstyle{plain}
\newtheorem{theorem}{Theorem}
\newtheorem{corollary}[theorem]{Corollary}
\newtheorem{proposition}[theorem]{Proposition}
\newtheorem{lemma}[theorem]{Lemma}
\theoremstyle{definition}
\newtheorem{example}[theorem]{Example}
\newtheorem{definition}[theorem]{Definition}
\theoremstyle{definition}
\newtheorem{remark}[theorem]{Remark}
\numberwithin{equation}{section}
\numberwithin{theorem}{section}
\newcommand{\maps}{\colon}
\newcommand{\und}[1]{\underline{#1}}
\newcommand{\refequal}[1]{\xy {\ar@{=}^{#1}
(-1,0)*{};(1,0)*{}};
\endxy}
\renewcommand{\to}{\rightarrow}
\def\Res{{\mathrm{Res}}}
\def\Ind{{\mathrm{Ind}}}
\def\smod{{\mathrm{-smod}_{\mathrm{pol}}}}  
\def\mf{\mathfrak}
\numberwithin{equation}{section}
\let\hat=\widehat
\let\tilde=\widetilde
\let\epsilon=\varepsilon
\def\C{{\mathbb{C}}}
\def\N{{\mathbbm N}}
\def\Z{{\mathbbm Z}}
\def\H{{\mathcal{H}}}
\def\cal#1{\mathcal{#1}}%
\def\1{\mathbbm{1}}%
\def\nn{\notag}
\def\la{\langle}
\def\ra{\rangle}
\def\cal#1{\mathcal{#1}}
\newcommand\nc{\newcommand}
\nc\rnc{\renewcommand}
\nc\Kar{\operatorname{Kar}}
\nc\End{\operatorname{End}}
\nc{\im}{\operatorname{im}}
\newcommand{\scs}{\scriptstyle}
\nc\Sym{\operatorname{Sym}}
\newcommand{\Sq}{{\rm Sq}}
\nc\Omit[1]{}
\nc\sfk{\mathsf{k}}
\nc\IQ{\mathbb{Q}}
\nc\IZ{\mathbb{Z}}
\nc\NH[1]{\mathsf{NH}_{#1}}
\nc\dd[1]{\partial_{#1}}
\nc\xv[1]{x_{#1}}
\nc\wv[1]{\omega_{#1}}
\nc\swv[1]{\omega_{#1}^{\scs{\sfs{}}}}
\nc\schwv[1]{\vartheta_{#1}}
\nc\dschwv[1]{\vartheta^{*}_{#1}}
\nc\ewv[1]{\mathsf{e}^{\omega}_{#1}}
\nc\hwv[1]{\mathsf{h}^{\omega}_{#1}}
\nc\schwvt[1]{\tilde{\vartheta}_{#1}}
\nc\schwvr[1]{{\vartheta}^{(r)}_{#1}}
\nc\schu[1]{\mathfrak{s}_{#1}}
\nc\Sp[1]{\mathfrak{S}_{#1}}
\nc\dSp[1]{\mathfrak{S}^{*}_{#1}}
\nc\symL[1]{\Lambda_{#1}}
\nc\Pol[1]{\mathsf{P}_{#1}}
\nc\ext{^{\scs{\mathsf{ext}}}}
\nc\extP[1]{\mathsf{P}\ext_{#1}}
\nc\extL[1]{\Lambda\ext_{#1}}
\nc\extNH[1]{\mathsf{NH}\ext_{#1}}
\nc\Sgp{\mathsf{S}}
\nc\ten{\otimes}
\nc\Wedge[1]{\bigwedge[#1]}
\nc\lp{\left(}
\nc\rp{\right)}
\nc\wdt[1]{\widetilde{#1}}
\nc\id{\operatorname{id}}
\nc\aand{\qquad\mbox{and}\qquad}
\rnc\S{\mathcal{S}}
\nc\ef{\mathsf{e}}
\nc\hf{\mathsf{h}}
\nc\pf{\mathsf{p}}
\newcommand{\Lw}{L^w}
\nc\sfu[1]{\mathsf{u}_{#1}}
\nc\sfr[1]{\mathsf{r}_{#1}}
\nc\sfs[1]{\mathsf{s}_{#1}}
\nc\sfv[1]{\mathsf{v}_{#1}}
\nc\Smx{\mathbb{S}}
\nc\sfvt[1]{\widetilde{\mathsf{v}}_{#1}}
\nc\sfD[1]{\mathsf{D}_{#1}}
\nc\sfM[1]{\mathsf{M}_{#1}}
\title{
A DG-extension of symmetric functions arising from higher representation theory
}
\begin{document}
\setcounter{tocdepth}{1}

\author{Andrea Appel}
\email{appelmb@gmail.com}
\address{Department of Mathematics\\ University of Southern California \\ Los Angeles, CA}

\author{Ilknur Egilmez}
\email{egilmez@usc.edu}
\address{Department of Mathematics\\ University of Southern California \\ Los Angeles, CA}

\author{Matthew Hogancamp}
\email{hogancam@usc.edu}
\address{Department of Mathematics\\ University of Southern California \\ Los Angeles, CA}

\author{Aaron D. Lauda}
\email{lauda@usc.edu}
\address{Department of Mathematics\\ University of Southern California \\ Los Angeles, CA}

%

\begin{abstract}
We investigate analogs of symmetric functions arising from an extension of the nilHecke algebra defined by Naisse and Vaz.  These extended symmetric functions form a subalgebra of the polynomial ring tensored with an exterior algebra.  We define families of bases for this algebra and show that it admits a family of differentials making it a sub-DG-algebra of the extended nilHecke algebra.  The ring of extended symmetric functions equipped with this differential is quasi-isomorphic to the cohomology of a Grassmannian.  We also introduce new deformed differentials on the extended nilHecke algebra that when restricted makes extended symmetric functions quasi-isomorphic to $GL(N)$-equivariant cohomology of Grassmannians.
\end{abstract}

\maketitle

\tableofcontents

One of the most fundamental objects in higher representation theory is the nilHecke algebra~\cite{Lau1,Rou2,KL1}.  This object is the most basic ingredient in categorified quantum groups and is intimately related to the geometry of flag varieties and Grassmannians~\cite{KoKu,Lau3}.  The nilHecke algebra admits a faithful action on the polynomial ring, further relating it to the combinatorics of symmetric functions and Schubert polynomials.

The categorification, or higher representation theory, perspective has demonstrated that extensions or alternative categorifications of quantum groups often have parallel implications in geometry and combinatorics.
As an example,  one motivation for studying the odd (spin/super) nilHecke algebra~\cite{KW1,Wang,EKL,KKO} was an attempt to supply a representation theoretic explanation for the appearance of ``odd Khovanov homology" -- a distinct link homology theory with similar properties to Khovanov homology.
The odd nilHecke algebra shared many of the relationships of the usual nilHecke algebra, including connections to a new noncommutative Hopf algebra of symmetric functions with strikingly similar combinatorics~\cite{EK}.  The odd nilHecke algebra gave ``odd" noncommutative analog of the cohomology of Grassmannians and Springer varieties~\cite{LauR,EKL}.  All of these developments grew out of the discovery of an odd analog of the nilHecke algebra.

Recently, Naisse and Vaz \cite{Vaz} have introduced an extension of the nilHecke algebra $\extNH{n}$ that we refer to as the {\em extended nilHecke algebra}.   This algebra arose in the study of a fundamental issue in higher representation theory.  The problem  was the fact biadjointness for $\cal{E}$ and $\cal{F}$ in the definition of  categorified quantum groups~\cite{KL3,Rou2} implied that it was only possible to categorify finite dimensional modules; in particular, categorical analogs of Verma modules were inaccessible within the existing theory.  Naisse and Vaz overcame this issue in the case of $\mf{sl}_2$, by omitting the biadjointness condition, enhancing the nilHecke algebra to the extended nilHecke algebra, and altering the main $\mf{sl}_2$-relation to a short exact sequence, rather than a direct sum decomposition.  This work allowed for the first categorification of Verma modules and may be an indication of the way forward in higher representation theory.

Given the importance of the extended nilHecke algebra in categorifying Verma modules, this article investigates the combinatorial implications of this algebra.  We define analogs of symmetric functions $\extL{n}$ arising from the extended nilHecke algebra that we call {\em extended symmetric functions}.  We construct families of bases for these algebras and investigate their combinatorial properties.  Extending the work of Naisse and Vaz, we show that the ring $\extL{n}$ admits a family of differentials $d_N$ such that $(\extL{n},d_N)$ is a sub-DG-algebra of the extended nilHecke algebra. Additionally, we show that the extended nilHecke algebra with its differential $d_N$ is isomorphic to the Koszul complex associated to a regular sequence of central elements in $\NH{n}$.  Restricting to $(\extL{n},d_N)$ gives a DG-algebra which is quasi-isomorphic to the cohomology ring of a Grassmannian $Gr(n,N)$.  The algebra $\extL{n}$ has been independently discovered by Naisse and Vaz using different techniques~\cite{Vaz2}.

Our work facilitates an explicit realization of the extended nilHecke algebra $\extNH{n}$ as a matrix ring of size $n!$ over its center,  the ring of extended symmetric functions.  This identifies the ring $\extL{n}$ with the center of the DG-algebra $\extNH{n}$.   The importance of the explicit isomorphism as a matrix ring over a positively graded algebra is that it allows us to define primitive idempotents decomposing the identity $1 \in \extNH{n}$.  This implies $\extNH{n}$ has a unique graded indecomposable supermodule up to isomorphism and grading shift.  Using this fact, we prove that the family of extended nilHecke algebras $\extNH{n}$, taken for all $n\geq0$, categorifies the bialgebra corresponding to the positive part $\mathbf{U}^+(\mf{sl}_2)$ of the quantized universal enveloping algebra of $\mf{sl}_2$, suggesting that the extended nilHecke algebra likely fits into a similar extension of KLR-algebras categorifying $\mathbf{U}^+(\mf{g})$ for symmetrizable Kac-Moody algebras.

We also define new deformed differentials $d_N^{\Sigma}$ on $\extNH{n}$ in section~\ref{sec:deformed-diff}.  The deformed differentials also restrict to $\extL{n}$ and the resulting cohomology of $(\extL{n},d_N^{\Sigma})$ is generically isomorphic to the $GL(N)$-equivariant cohomology of a Grassmannian.

Let us point out more clearly the relation between our work and \cite{Vaz}.  In \emph{loc.~cit.}, Vaz-Naisse define bigraded algebras $\Omega_k$ ($k\in\Z_{\geq 0}$) and bigraded bimodules ${}_{\Omega_{k+1}}\cal{F}_{\Omega_k}$, ${}_{\Omega_k}\cal{E}_{\Omega_{k+1}}$.  These bimodules generate a 2-category which categorifies the Verma module for quantum $\mf{sl}_2$ with generic highest weight.  In this context, the extended nilHecke $\extNH{n}$ algebra arises as the ring of bimodule endomorphisms of $\cal{F}^{\otimes n}$, or equivalently $\cal{E}^{\otimes n}$.  Our work provides an idempotent decomposition of $\cal{E}^{\otimes n}$ (respectively $\cal{F}^{\otimes n}$) as a direct sum of $n!$ copies with shifts of a bimodule $\cal{E}^{(n)}$ (respectively $\cal{F}^{(n)}$), thereby paving the way for a ``thick calculus'' version of the Vaz-Naisse 2-category, similar to what was accomplished in \cite{KLMS}.  In this context, the ring of extended symmetric functions appears as the ring of endomorphisms of $\cal{E}^{(n)}$ and $\cal{F}^{(n)}$.  It occurs that the resulting endomorphism ring is isomorphic to $\Omega_n$, so that  ${}_{\Omega_{n+k}}\cal{F}^{(n)}_{\Omega_k}$ and ${}_{\Omega_{k-n}}\cal{E}^{(n)}_{\Omega_{k}}$ may be more appropriately referred to as \emph{trimodules} over $(\Omega_{k\pm n},\Omega_n,\Omega_k)$.  We remark that all of the above is compatible with the differentials $d_N$ in the appropriate sense.  See \ref{ss:cat} for more.

Finally, we mention an interpretation of the algebraic structures appearing in this subject in terms of Khovanov-Rozansky homology, both the doubly graded $\mf{sl}_N$ version \cite{KhR} and the triply graded HOMFLY-PT version \cite{KR08b}.  The cohomology rings of Grassmannian $Gr(k,N)$ can be thought of as the $\mf{sl}_N$ homology of the $k$-colored unknot \cite{Wu,Yoshi}, while the ring of extended symmetric functions $\extL{k}$ can be thought of as the HOMFLY-PT homology of the $k$-colored unknot \cite{WW09}.  The Koszul differential $d_N$ considered here and in \cite{Vaz} is then a special case of Rasmussen's $\mf{sl}_N$ differential \cite{Rasmussen}.  We expect that the trimodules ${}_{\Omega_{n+k}}\cal{F}^{(n)}_{\Omega_k}$ and ${}_{\Omega_{k-n}}\cal{E}^{(n)}_{\Omega_{k}}$ appear in this setting as the homologies of certain MOY diagrams, namely the colored theta graphs.  This is likely to be related to the point of view adopted by Vaz and Naisse in ~\cite{Vaz3}.

\subsection*{Acknowledgements}  We would like to thank David Rose for helpful conversations on deformed differentials and Weiqiang Wang for pointing out the connection between extended symmetric functions and the work of Solomon.   All authors would like to acknowledge partial support by NSF grant DMS-1255334.  This project grew out of a NSF CAREER sponsored vertically integrated Categorification learning seminar at the University of Southern California.

\section{The nilHecke algebra}

Many of our constructions for the extended nilHecke algebra build off of results for the usual nilHecke algebra and its action on polynomials.  Here we recall the relevant results.

\subsection{The definition}

Recall the nilHecke algebra $\NH{n}$ defined by generators $x_i$ for $1 \leq i \leq n$ and $\partial_j$ for $1 \leq j \leq n-1$ and relations
\begin{equation}\label{eq:nilHecke}
 \begin{array}{ll}
 x_i x_j =   x_j x_i , &  \\
   \partial_i x_j = x_j\partial_i \quad \text{if $|i-j|>1$}, &
   \partial_i\partial_j = \partial_j\partial_i \quad \text{if $|i-j|>1$}, \\
  \partial_i^2 = 0,  &
   \partial_i\partial_{i+1}\partial_i = \partial_{i+1}\partial_i\partial_{i+1},  \\
   x_i \partial_i - \partial_i x_{i+1}=1,  &   \partial_i x_i - x_{i+1} \partial_i =1.
  \end{array}
\end{equation}
It is not hard to prove that these relations imply
\begin{equation}\label{eq:ind-dotslide}
  \partial_i x_{i}^{a+1} - x_{i+1}^{a+1}\partial_i= \hf_{a}(x_{i},x_{i+1})
 = x_{i}^{a+1}\partial_i - \partial_i x_{i+1}^{a+1}.
\end{equation}
for all $a>0$.

Given any element $w \in \Sgp_n$ and a reduced decomposition $w =s_{i_1} \dots s_{i_m}$ into simple transpositions we write $\partial_{w}:= \partial_{i_1} \dots \partial_{i_m}$.  The axioms ensure this definition does not depend on the choice of reduced expression.  We write $w_0$ for the longest word in the symmetric group $\Sgp_n$ and $\partial_{w_0}$ for the corresponding product of divided difference operators.

The algebra $\NH{n}$ acts on the polynomial ring $\Pol{n} :=\IQ[\xv{1},\dots, \xv{n}]$
with $x_i$ acting by multiplication by $x_i$ and $\partial_i \maps \Pol{n} \to \Pol{n}$
given by divided difference operators
\begin{equation}
 \partial_i := \frac{1-s_i}{x_i-x_{i+1}}.
\end{equation}

We recall several important facts relating to the nilHecke algebra and its action on polynomials.
\begin{itemize}
  \item The ring of symmetric functions can be realized strictly in terms of the divided difference operators
\[
 \Lambda_n \; :=\; \Z[x_1,\dots, x_n]^{\Sgp_n} \;=\; \bigcap_{j=1}^{n-1} \ker \partial_i
 \;=\; \bigcap_{j=1}^{n-1}\im \partial_i.
\]

  \item The additive basis of $\Lambda_n$ given by Schur functions $\mathfrak{s}_{\lambda}$ can be defined using the nilHecke algebra action on polynomials via
  \[
  \mathfrak{s}_{\lambda}:= \partial_{w_0}(\und{x}^{\delta+\lambda}) := \partial_{w_0}(x_1^{n-1 + \lambda_1} x_2^{n-2 + \lambda_2} \dots x_n^{0+\lambda_n}),
  \]
for $\lambda=(\lambda_1,\dots,\lambda_n)$ a partition with $n$ parts.

  \item For $w \in \Sgp_n$ define the {\em Schubert polynomials} of Lascoux and
Sch\"{u}tzenberger~\cite{Las} as
\begin{equation}
\Sp{w}(x) = \partial_{w^{-1}w_0}x^{\delta}
\end{equation}
where $w_0$ is the permutation of maximal length and
$x^{\delta}=x_1^{a-1}x_2^{a-2}\cdots x_{a-1}$.  In case $w=1\in \Sgp_n$, we have $\Sp{\id} = \partial_{w_0}(x^\delta)=1$.

\item   We have
\begin{equation}
\im \partial_{w_0} = \symL{n}\subset \Pol{n}.
\end{equation}
Indeed, if $f\in \symL{n}$, then $f = f \partial_{w_0}(x^{\delta}) = \partial_{w_0}(f x^\delta)$ since divided difference operators are $\symL{n}$-linear.  Conversely, if $f\in
\im \partial_{w_0}$, then $\partial_i(f)=0$ for $i=1,\ldots,n-1$, hence $f\in \Pol{n}^{\Sgp_n}$.

\item The polynomial ring $\Pol{n}$ is a free module over $\Lambda_n$ of rank $n!$~\cite[Proposition 2.5.5 and 2.5.5]{Man}.  In particular, multiplication in $\Pol{n}$ induces a ring isomorphism $\Pol{n}\simeq \H_n\ten\Lambda_n$
where $\H_n$ is equivalently the abelian subgroup spanned by either of the sets
$\left\{ \Sp{w} \mid w \in \Sgp_n   \right\}$ or 
$\left\{  x_1^{i_1} \dots x_n^{i_n} \mid 0 \leq i_k \leq n-k .  \right\}$.
\end{itemize}
The last statement allows us to identify $\End_{\Lambda_n}(\Pol{n})$ as the matrix ring of size $n!$ with coefficients in the ring $\Lambda_n$.  The ring $\Pol{n}$ is graded with $\deg(x_i) =2$.  Taking grading into account, it follows that there is an isomorphism of graded rings $\End_{\Lambda_n}(\Pol{n}) \cong {\rm Mat}( (n)^!_{q^2}; \Lambda_n)$, where
$(n)_{q^2}^! =q^{n(n-1)/2}[n]!$ are the symmetric quantum factorials~\cite[Proposition 3.5]{Lau1}.

The action of $\NH{n}$ on $\Pol{n}$ defines a graded ring homomorphism
\[
\gamma \maps \NH{n} \to {\rm Mat}( (n)^!_{q^2}; \Lambda_n).
\]
It was shown in \cite[Proposition 3.5]{Lau1} that $\gamma$ is an isomorphism of graded rings.  We recall an alternative proof from \cite{KLMS} Section 2.5 that we translate into algebraic language from the so-called ``thick calculus".

For any composition $\mu=(\mu_1,\dots, \mu_n)$ write $\und{x}^{\mu}:= x_1^{\mu_1} x_2^{\mu_2} \dots x_n^{\mu_n}$.  We write $\und{x}^{\delta}:= x_1^{n-1}x_2^{n-2}\dots x_n^0$.
The set of sequences
\begin{equation}
  \Sq(n) := \{
  \und{\ell} = \ell_1 \dots \ell_{n-1} \mid 0 \leq \ell_{\nu} \leq \nu, \;\; \nu =  1,2, \dots n-1
  \}
\end{equation}
has size $|\Sq(n)|=n!$.  Let $|\und{\ell}|=\sum_{\nu} \ell_{\nu}$, and set $\hat{\ell_j}=j-\ell_j$.
Define a composition with $n$-parts by
\begin{equation}
  \hat{\und{\ell}}=(0, \hat{\ell}_1,\dots,\hat{\ell}_{n-1})=  \left(0, 1-\ell_1, 2-\ell_2, \cdots,   n-1-\ell_{n-1} \right).
\end{equation}

Let $\ef_r^{(a)}$ denote the $r$th elementary symmetric polynomial in $a$ variables.   The {\em standard elementary monomials} are given  by
\begin{equation}
  \ef_{\und{\ell}} := \ef_{\ell_1}^{(1)}\ef_{\ell_2}^{(2)} \dots \ef_{\ell_{a-1}}^{(a-1)}.
\end{equation}

Define elements in $\NH{n}$ by
\begin{equation}
  \sigma_{\und{\ell}} := \ef_{\und{\ell}} \partial_{w_0}, \qquad \quad \lambda_{\und{\ell}} := (-1)^{\hat{\und{\ell}}}\;\und{x}^{\delta}\partial_{w_0}\und{x}^{\hat{\und{\ell}}}.
\end{equation}

\begin{theorem}[\cite{KLMS}] \label{thm_Ea} \hfill

\begin{enumerate}
\item For all $\ell,\ell'$ in $Sq(n)$, $\lambda_{\und{\ell'}} \cdot \sigma_{\und{\ell}} =\delta_{\und{\ell},\und{\ell'}}\; x^{\delta} \partial_{w_0}$.

\item The set
$
\left\{ \;
    \lambda_{\und{\ell}} \sigma_{\und{\ell}} \in   \Sq(n)
\;\right\}
$
form a complete set of mutually orthogonal primitive idempotents in $\NH{n}$.

\item The identity element $1 \in \NH{n}$ decomposes as
\begin{equation}
  1 = \sum_{\und{\ell}} (-1)^{\hat{\und{\ell}}}\ef_{\und{\ell}}\partial_{w_0} \und{x}^{\hat{\und{\ell}}}.
\end{equation}

\item  Enumerate the rows and columns of $n! \times n!$-matrices by the elements $\ell \in \Sq(n)$.  There is an isomorphism of graded algebras
\begin{equation} \label{eq:nil-to-matrix}
{\rm Mat}\left(
  (n)^!_{q^2}, \Lambda_{n}    \right) \longrightarrow \NH{n}
\end{equation}
sending an element $x \in \extL{n}$ in the $(\und{\ell}, \und{\ell'})$ entry to the element $\sigma_{\und{\ell}}x\lambda_{\und{\ell'}}$.
\end{enumerate}
\end{theorem}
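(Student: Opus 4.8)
The substantive step is part (1); once it is in hand, (2)--(4) follow by formal manipulations. The plan is to use throughout that $\NH{n}$ acts faithfully on $\Pol{n}$, that $\partial_{w_0}$ is $\Lambda_n$-linear, and that $\partial_{w_0}(\und{x}^{\delta})=1$; the last two facts give in $\NH{n}$ the identities $\partial_{w_0}\,\und{x}^{\delta}\,\partial_{w_0}=\partial_{w_0}$, $\sigma_{\und{\ell}}\,\und{x}^{\delta}\partial_{w_0}=\sigma_{\und{\ell}}$ and $\und{x}^{\delta}\partial_{w_0}\,\lambda_{\und{\ell}}=\lambda_{\und{\ell}}$ (verify them as operators on $\Pol{n}$). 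Expanding $\lambda_{\und{\ell'}}\sigma_{\und{\ell}}$ as an operator $f\mapsto(-1)^{\hat{\und{\ell'}}}\und{x}^{\delta}\partial_{w_0}\bigl(\und{x}^{\hat{\und{\ell'}}}\ef_{\und{\ell}}\cdot\partial_{w_0}(f)\bigr)$ and pulling the symmetric scalar $\partial_{w_0}(f)\in\Lambda_n$ out of $\partial_{w_0}$, one sees (evaluating the resulting operator identity at $f=\und{x}^{\delta}$, a non-zerodivisor) that (1) is equivalent to the numerical identity $\partial_{w_0}\bigl(\und{x}^{\hat{\und{\ell'}}}\ef_{\und{\ell}}\bigr)=(-1)^{\hat{\und{\ell'}}}\,\delta_{\und{\ell},\und{\ell'}}$ for $\und{\ell},\und{\ell'}\in\Sq(n)$.

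For this identity I would use the classical alternant description $\partial_{w_0}(\und{x}^{\mu})=\det(x_j^{\mu_i})_{i,j}/\det(x_j^{\delta_i})_{i,j}$, which vanishes unless the entries $\mu_i$ are pairwise distinct and otherwise equals $\pm\,\schu{\lambda}$ for a partition $\lambda$ that is the empty partition precisely when $\mu$ is a permutation of the staircase $\delta=(n-1,\dots,1,0)$. The decisive observation is that every monomial $\und{x}^{\mu}$ occurring in $\und{x}^{\hat{\und{\ell'}}}\ef_{\und{\ell}}$ has all exponents in $\{0,1,\dots,n-1\}$: the $x_k$-exponent contributed by $\und{x}^{\hat{\und{\ell'}}}$ is at most $k-1$, and the multilinear factors $\ef_{\ell_1}^{(1)},\dots,\ef_{\ell_{n-1}}^{(n-1)}$ can raise it by at most $n-k$. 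Hence only the monomials $\und{x}^{\mu}$ with $\mu$ a permutation of $\delta$ survive $\partial_{w_0}$, each contributing $\pm 1$, so $\partial_{w_0}(\und{x}^{\hat{\und{\ell'}}}\ef_{\und{\ell}})$ is a $\Z$-valued signed count of such staircase monomials; in particular it vanishes unless $|\und{\ell}|=|\und{\ell'}|$. It then remains to show that when $|\und{\ell}|=|\und{\ell'}|$ this count equals $(-1)^{\hat{\und{\ell'}}}$ if $\und{\ell}=\und{\ell'}$ and $0$ otherwise. I would prove this by induction on $n$, using the factorization $\partial_{w_0}=\partial_{w_0^{(n-1)}}\,\partial_{n-1}\partial_{n-2}\cdots\partial_1$ (with $\partial_{w_0^{(n-1)}}$ involving only $\partial_1,\dots,\partial_{n-2}$), peeling off the variable $x_n$ and the factor $\ef_{\ell_{n-1}}^{(n-1)}$ via the recursion $\ef_r^{(n-1)}=\ef_r^{(n-2)}+x_{n-1}\ef_{r-1}^{(n-2)}$ together with the divided-difference relations \eqref{eq:ind-dotslide}, and matching the bijection $\Sq(n)=\Sq(n-1)\times\{0,1,\dots,n-1\}$. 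This induction --- the algebraic form of the ``splitter/bubble-slide'' computation in \cite{KLMS} --- is the step I expect to cost the most work; everything else is bookkeeping.

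Granting (1): for (2), setting $e_{\und{\ell}}:=\sigma_{\und{\ell}}\lambda_{\und{\ell}}$ one computes $e_{\und{\ell}}e_{\und{\ell'}}=\sigma_{\und{\ell}}(\lambda_{\und{\ell}}\sigma_{\und{\ell'}})\lambda_{\und{\ell'}}=\delta_{\und{\ell},\und{\ell'}}\,\sigma_{\und{\ell}}\,\und{x}^{\delta}\partial_{w_0}\,\lambda_{\und{\ell}}=\delta_{\und{\ell},\und{\ell'}}e_{\und{\ell}}$, so the $e_{\und{\ell}}$ are mutually orthogonal idempotents. For completeness, and for (3), write an arbitrary $f\in\Pol{n}$ as $\sum_{\und{m}}c_{\und{m}}\ef_{\und{m}}$ with $c_{\und{m}}\in\Lambda_n$: this is possible because $\{\und{x}^{\hat{\und{m}}}\}_{\und{m}\in\Sq(n)}$ is a $\Lambda_n$-basis of $\Pol{n}$ (it is the image under $w_0$ of the monomial basis of $\H_n$ recorded in the excerpt) and (1) exhibits $\{\ef_{\und{m}}\}$ as its dual up to signs under the pairing $(f,g)\mapsto\partial_{w_0}(fg)$, so $\{\ef_{\und{m}}\}$ is also a $\Lambda_n$-basis by graded Nakayama. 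Then $e_{\und{\ell}}(f)=(-1)^{\hat{\und{\ell}}}\ef_{\und{\ell}}\,\partial_{w_0}(\und{x}^{\hat{\und{\ell}}}f)=(-1)^{\hat{\und{\ell}}}\ef_{\und{\ell}}\sum_{\und{m}}c_{\und{m}}\,\partial_{w_0}(\und{x}^{\hat{\und{\ell}}}\ef_{\und{m}})=\ef_{\und{\ell}}c_{\und{\ell}}$ by (1), whence $\bigl(\sum_{\und{\ell}}e_{\und{\ell}}\bigr)(f)=f$ and so $\sum_{\und{\ell}}e_{\und{\ell}}=1$; this is (3) and the completeness asserted in (2). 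Primitivity of $e_{\und{\ell}}$ follows once (4) is established, since then $e_{\und{\ell}}\NH{n}\,e_{\und{\ell}}\cong\Lambda_n$, a domain, which has no idempotents besides $0$ and $1$.

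For (4), define $\psi\colon{\rm Mat}\bigl((n)^!_{q^2},\Lambda_n\bigr)\to\NH{n}$ by $\psi(cE_{\und{\ell},\und{\ell'}})=\sigma_{\und{\ell}}\,c\,\lambda_{\und{\ell'}}$ for $c\in\Lambda_n$, extended $\Lambda_n$-linearly. It is a ring homomorphism by (1) together with $\sigma_{\und{\ell}}\,\und{x}^{\delta}\partial_{w_0}=\sigma_{\und{\ell}}$ and $\und{x}^{\delta}\partial_{w_0}\,\lambda_{\und{\ell'}}=\lambda_{\und{\ell'}}$ and the centrality of $\Lambda_n$, and it is unital by (3). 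It is onto: (3) gives $\NH{n}=\bigoplus_{\und{\ell},\und{\ell'}}e_{\und{\ell}}\NH{n}\,e_{\und{\ell'}}$, and for any $a\in\NH{n}$ the element $\lambda_{\und{\ell}}\,a\,\sigma_{\und{\ell'}}$ acts on $\Pol{n}$ as $c\cdot\und{x}^{\delta}\partial_{w_0}$ for some $c\in\Lambda_n$ independent of the argument (again pull $\partial_{w_0}(f)\in\Lambda_n$ through the central $\Lambda_n$-action and out of $\partial_{w_0}$), so $e_{\und{\ell}}\NH{n}\,e_{\und{\ell'}}\subseteq\Lambda_n\,\sigma_{\und{\ell}}\lambda_{\und{\ell'}}=\psi(\Lambda_n E_{\und{\ell},\und{\ell'}})$. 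It is injective: if $c\,\sigma_{\und{\ell}}\lambda_{\und{\ell'}}=0$ then left-multiplying by $\lambda_{\und{\ell}}$, right-multiplying by $\sigma_{\und{\ell'}}$, and using (1) give $c\,\und{x}^{\delta}\partial_{w_0}=0$, whence evaluating at $\und{x}^{\delta}$ yields $c\,\und{x}^{\delta}=0$ and $c=0$ (alternatively, surjectivity plus the freeness of both sides as $\Lambda_n$-modules of rank $(n!)^2$ forces bijectivity). Finally $\psi$ is graded: $\deg\sigma_{\und{\ell}}=2|\und{\ell}|-n(n-1)$ and $\deg\lambda_{\und{\ell'}}=n(n-1)-2|\und{\ell'}|$, so $\psi$ sends an $(\und{\ell},\und{\ell'})$-entry of degree $d$ to degree $d+2|\und{\ell}|-2|\und{\ell'}|$, matching the grading on the matrix ring of graded size $\sum_{\und{\ell}\in\Sq(n)}q^{2|\und{\ell}|}=\prod_{\nu=1}^{n-1}(1+q^2+\cdots+q^{2\nu})=(n)^!_{q^2}$.
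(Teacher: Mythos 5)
Your reduction of the whole theorem to the single pairing identity $\partial_{w_0}\bigl(\und{x}^{\hat{\und{\ell'}}}\ef_{\und{\ell}}\bigr)=(-1)^{|\hat{\und{\ell'}}|}\delta_{\und{\ell},\und{\ell'}}$ is sound, and your derivations of (2)--(4) from it are correct and complete: the operator identities $\partial_{w_0}\und{x}^{\delta}\partial_{w_0}=\partial_{w_0}$, $\sigma_{\und{\ell}}\und{x}^{\delta}\partial_{w_0}=\sigma_{\und{\ell}}$, $\und{x}^{\delta}\partial_{w_0}\lambda_{\und{\ell}}=\lambda_{\und{\ell}}$ all follow from $\Lambda_n$-linearity of $\partial_{w_0}$ and faithfulness, the orthogonality/completeness/primitivity arguments are the standard ones, and your grading check against $(n)^!_{q^2}=\prod_{\nu}(1+q^2+\cdots+q^{2\nu})$ is right. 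You also (correctly, if silently) read the idempotents as $\sigma_{\und{\ell}}\lambda_{\und{\ell}}$ rather than the order $\lambda_{\und{\ell}}\sigma_{\und{\ell}}$ printed in part (2), which is the reading forced by part (3). Be aware that the paper itself supplies no proof here --- the theorem is quoted from \cite{KLMS} (and \cite{Lau1}), and the argument there is, in algebraic translation, exactly the pairing computation you have singled out, so your route coincides with the cited one.

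The genuine gap is that this pairing identity, which carries essentially all of the content of part (1), is not actually proved. Your alternant/degree argument does establish real partial information --- every monomial of $\und{x}^{\hat{\und{\ell'}}}\ef_{\und{\ell}}$ has exponents in $\{0,\dots,n-1\}$, so the value is a signed integer count of staircase monomials, and it vanishes unless $|\und{\ell}|=|\und{\ell'}|$ --- but neither the diagonal value $(-1)^{|\hat{\und{\ell'}}|}$ nor the off-diagonal vanishing within a fixed degree follows from this; those are precisely the ``orthonormality of $\{\ef_{\und{m}}\}$ against $\{\und{x}^{\hat{\und{m}}}\}$'' that the theorem encodes, and they are only asserted to follow from an induction (peeling off $x_n$ and $\ef_{\ell_{n-1}}^{(n-1)}$ via $\ef_r^{(n-1)}=\ef_r^{(n-2)}+x_{n-1}\ef_{r-1}^{(n-2)}$ and \eqref{eq:ind-dotslide}) that you do not carry out. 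The plan is viable --- it is the algebraic form of the computation in \cite{KLMS} --- but as written the base case/inductive step bookkeeping (in particular, which factorization of $\partial_{w_0}$ lets you commute the $x_n$-power and the last elementary factor past the remaining divided differences, and how $\Sq(n)\cong\Sq(n-1)\times\{0,\dots,n-1\}$ matches up under it) is exactly where the work lies, and it needs to be written out before the proof of (1), and hence of the theorem, is complete. Note also that a secondary ingredient you invoke for completeness in (2)--(3), that $\{\ef_{\und{m}}\}_{\und{m}\in\Sq(n)}$ is a $\Lambda_n$-basis of $\Pol{n}$, is itself deduced from the same unproven pairing identity (via perfectness of $(f,g)\mapsto\partial_{w_0}(fg)$), so it does not provide an independent shortcut; alternatively you may quote it directly from \cite[Prop.~2.5.5]{Man}.
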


The nilHecke algebra is the simplest example of a KLR-algebra, corresponding to the Lie algebra $\mf{sl}_2$.  The results above are critical in the categorification of   positive parts of quantized universal enveloping algebras via KLR-algebras~\cite{KL1,KL2,Rou2}.  Another important construction from categorified representation theory is the so-called {\em cyclotomic quotients} of KLR-algebras.  These are used to categorify irreducible representations of $\mathbf{U}_q(\mf{g})$.

For each $N>1$ define the cyclotomic ideal of $\NH{n}$ as the two sided ideal generated by $x_1^N$,
\begin{equation} \label{eq:cyclotomic}
  I_N := \la x_1^N\ra.
\end{equation}
We define the cyclotomic quotient by $\NH{n}^N:= \NH{n}/ I_N$.
We have the following results.
\begin{itemize}
  \item The isomorphism $\gamma$ from \eqref{eq:nil-to-matrix} induces an isomorphism~\cite[Proposition 5.3]{Lau3}
\begin{equation}
     {\rm Mat}\left(
  (n)^!_{q^2}, H^{\ast}(Gr(n,N))    \right) \longrightarrow \NH{n}^N
\end{equation}
where  $H^{\ast}(Gr(n,N))$ is the cohomology ring of the Grassmannian of complex $n$-planes in $\C^N$.
  \item The categories of graded projective modules over $\bigoplus_n \NH{n}^N$ categorify~\cite{LV,KK,Web5} the irreducible $\mathbf{U}_q(\mf{sl}_2)$ representation $V_N$ of highest weight $N$.
\end{itemize}

\section{The extended nilHecke algebra}

\subsection{The definition}

The extended nilHecke algebra $\extNH{n}$, first defined in ~\cite{Vaz}, is a graded superalgebra with even generators $x_1,\cdots,x_n$ and $\dd{1}, \cdots, \dd{n-1}$, and odd generators $\wv{1},\cdots, \wv{n}$ satisfying equations \eqref{eq:nilHecke} and the  following relations
\begin{equation*}
\begin{aligned}
&x_{i}\wv{j} = \wv{j}x_{i}, \hspace{1.5cm} \wv{i}\wv{j}= - \wv{j}\wv{i},\\
&\dd{i}\wv{j} = \wv{j}\dd{i} - \delta_{ij}\wv{i+1}(x_{i+1}\dd{i} - \dd{i}x_{i+1}).
\end{aligned}
\end{equation*}
For each fixed integer $k$ the algebra $\extNH{n}$ admits a $\Z$-grading with
\begin{equation}
  \deg(x_i) = 2, \qquad \deg(\partial_i) = -2, \qquad \deg(\omega_k) = -2k-2.
\end{equation}

%
%

For each $w\in \Sgp_n$ fix a reduced expression.  A basis for the superalgebra $\extNH{n}(m)$ is given in~\cite{Vaz} by the set of elements
\begin{equation}
\left\{ x_1^{i_1}\cdots x_n^{i_n}\wv{1}^{\alpha_1}\cdots \wv{n}^{\alpha_n}\dd{w} \right\}
\end{equation}
 $i_k \in \mathbb{N}$ and $\alpha_l \in \{0,1\}$.

\begin{remark}
  In \cite{Vaz} they consider an additional grading for their application to categorical Verma modules.  Here we ignore this additional grading.
\end{remark}

\subsection{Action on polynomials}

Define the {\em extended polynomial ring}
\begin{equation}
\extP{n}=\IQ[\xv{1},\dots, \xv{n}]\ten\Wedge{\wv{1},\dots, \wv{n}}.
\end{equation}
There is an action of $\extNH{n}$ on $\extP{n}$ defined by letting
$\xv{i}$ and $\wv{i}$ act by left multiplication and letting $\partial_i$ act by {\em extended divided difference operators}
\begin{equation*}
\dd{i}(1) = 0, \qquad \dd{i}(\wv{j}) = -\delta_{ij}\wv{j+1}, \qquad
\dd{i}(\xv{j}) =
\begin{cases}
1  & \text{if } j=i, \\
-1 & \text{if } j=i+1, \\
0  & \text{otherwise.}
\end{cases}
\end{equation*}
These operators are extended to arbitrary polynomials by the rule
\begin{equation}\label{eq:modLeib}
\dd{i}(fg) = \dd{i}(f)g + f\dd{i}(g) - (\xv{i}-\xv{i+1})\dd{i}(f)\dd{i}(g)
\end{equation}
for all $f,g \in \IQ[\xv{1},\dots, \xv{n}]\ten\Wedge{\wv{1},\dots, \wv{n}}$.

\subsection{Differentials} \label{subsec:diff}

Recall that a \emph{differential graded algebra} (or DG-algebra) is  a $\Z$-graded unital algebra $A$ with
$d \maps A \to A$ which is degree -1 satisfying
\begin{equation}
  d^2=0, \qquad d(ab) = d(a)b +(-1)^{\deg(a)\deg(b)}ad(b), \quad d(1) =0.
\end{equation}

A \emph{left DG-module} $M$ is a graded left $A$-module with differential $d_M \maps M_i \to M_{i-1}$ such that for all $a\in A$, $m\in M$,
\begin{equation}
  d_M(am) = d(a)m + (-1)^{\deg(a)}a d_M(m).
\end{equation}

For each $N>0$, define a differential $d_N$ on $\extNH{n}$ by
\begin{equation}
  d_N(x_i) =0, \qquad  d_N(\partial_i) =0, \qquad d_N(\omega_i) = (-1)^i \hf_{N-i+1}(\und{x}_i),
\end{equation}
where $\und{x}_i$ denotes the set of variables $\{x_1,x_2,\dots, x_i\}$.  Note the ordinary nilHecke algebra $\NH{n}$ is in the kernel of this differential for all $N$.  Furthermore, $d_N(\omega_1)=-x_1^N$.  By \cite[Proposition 2.8]{Lau4} it follows $d_N(\omega_j)$ is contained in the cyclotomic ideal $I_N:= \la x_1^N\ra$ from \eqref{eq:cyclotomic}.

\begin{theorem}[\cite{Vaz} Proposition 8.3]
  The DG-algebra $\left(\extNH{n}, d_N\right)$ is quasi-isomorphic to the cyclotomic quotient of the nilHecke algebra $\NH{n}^N := \NH{n}/I_N$.
\end{theorem}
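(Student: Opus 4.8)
\emph{The plan} is to show that the map $\phi\colon\extNH{n}\to\NH{n}^N$ determined by $x_i\mapsto x_i$, $\dd i\mapsto\dd i$, $\wv i\mapsto0$ is a quasi-isomorphism of DG-algebras, where $\NH{n}^N$ carries the zero differential. It is a well-defined algebra map (the relation $\dd i\wv j=\wv j\dd i-\delta_{ij}\wv{i+1}(x_{i+1}\dd i-\dd ix_{i+1})$ is sent to $0=0$, the others coming from $\NH{n}$), and a chain map because $\phi(d_N(\wv i))=0$: indeed $d_N(\wv i)=(-1)^i\hf_{N-i+1}(\und{x}_i)\in I_N$ as recorded above, while $d_N$ kills the remaining generators. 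So on $H_0$ the map $\phi$ is the quotient $\NH{n}\to\NH{n}/I_N$, and it remains to identify the cohomology of $(\extNH{n},d_N)$.

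First I would make a structural identification. Using the basis of $\extNH{n}$ from \cite{Vaz} and straightening — moving each $\wv i$ to the right past the $\dd w$, which only produces corrections involving $\wv$'s of strictly larger index and hence is a unitriangular change of basis — one gets $\extNH{n}\cong\NH{n}\ten\Wedge{\wv1,\dots,\wv n}$ as graded left $\NH{n}$-modules, and since the $x_i$ commute with the $\wv j$,
\[
d_N\bigl(g\,\wv{j_1}\cdots\wv{j_k}\bigr)=\sum_{l=1}^{k}(-1)^{l-1}\,g f_{j_l}\,\wv{j_1}\cdots\widehat{\wv{j_l}}\cdots\wv{j_k},\qquad f_i:=(-1)^i\hf_{N-i+1}(\und{x}_i)\in\NH{n},
\]
for $g\in\NH{n}$. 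Thus $(\extNH{n},d_N)$ is exactly the Koszul complex of left multiplication by the pairwise commuting elements $f_1,\dots,f_n$ — pairwise commuting because they are polynomials in the $x_i$, which is also why $d_N^2=0$.

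Next I would compute the homology. For $H_0$: a short computation with divided differences — using $\dd k\bigl(\hf_m(\und{x}_k)\bigr)=\hf_{m-1}(\und{x}_{k+1})$, hence $\dd k(f_k)=-f_{k+1}$ and $f_k\dd k=\dd kf_k+f_{k+1}\bigl(1-(x_k-x_{k+1})\dd k\bigr)$, while $f_j\dd i=\dd if_j$ for $i\neq j$ — shows that the left ideal $\sum_j\NH{n}f_j$ is stable under right multiplication by the $x_i$ and the $\dd i$, hence is two-sided; as it contains $f_1=-x_1^N$ and is contained in $I_N$, it equals $I_N$, so $H_0(\extNH{n},d_N)=\NH{n}/I_N=\NH{n}^N$ and $\phi$ induces this isomorphism. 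For $H_i$ with $i>0$: pass to the faithful polynomial representation $\NH{n}=\End_{\symL n}(\Pol n)$ with $\Pol n$ free of rank $n!$ over $\symL n$, where $f_j$ acts by multiplication by $(-1)^j\hf_{N-j+1}(\und{x}_j)$; then the Koszul complex above is $\Hom_{\symL n}\!\bigl(\Pol n,\,\mathrm{Kos}(\Pol n;\hf_N(\und{x}_1),\dots,\hf_{N-n+1}(\und{x}_n))\bigr)$, the inner complex being the ordinary super-commutative Koszul complex of a sequence of \emph{polynomials}. Since $\Pol n$ is $\symL n$-projective, $\Hom_{\symL n}(\Pol n,-)$ is exact, so it suffices that $x_1^N=\hf_N(\und{x}_1),\hf_{N-1}(\und{x}_2),\dots,\hf_{N-n+1}(\und{x}_n)$ be a regular sequence in $\Pol n$; this holds because $\hf_{N-k+1}(\und{x}_k)$ is monic of degree $N-k+1$ in $x_k$, so $\Pol n$ modulo these $n$ elements is finite-dimensional, making them a system of parameters in the Cohen--Macaulay ring $\Pol n$ and hence regular. (By \eqref{eq:ind-dotslide} the ideal they generate is the $\dd{}$-closure of $(x_1^N)$, so the inner Koszul complex has $H_0$ the polynomial representation of $\NH{n}^N$, consistent with the computation above.) Hence $H_i(\extNH{n},d_N)=0$ for $i>0$, and $\phi$ is a quasi-isomorphism.

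The step I expect to be the main obstacle is the structural identification: checking that straightening really turns $(\extNH{n},d_N)$ into an honest Koszul complex over $\NH{n}$ forces one to control the twisted $\dd{}$--$\wv{}$ commutation relation with care — to see that it preserves $\wv$-degree, so that $d_N$ has the stated triangular form, and to verify that $\sum_j\NH{n}f_j$ is two-sided. Once that is in place, the homological input is only the classical regularity of $\hf_N(\und{x}_1),\dots,\hf_{N-n+1}(\und{x}_n)$ together with exactness of $\Hom_{\symL n}(\Pol n,-)$.
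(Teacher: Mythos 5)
Your overall strategy is sound, and it is worth noting that the paper does not actually prove this statement — it is quoted from Naisse--Vaz — while the closest argument in the paper is the proof of Proposition~\ref{prop:dN-restricts} and its corollary, which treats the restricted complex $(\extL{n},d_N)$ by exactly your Koszul-plus-regular-sequence strategy. Your individual steps mostly check out: $\phi$ is a well-defined DG-algebra map onto $(\NH{n}^N,0)$; the straightening argument does give $\extNH{n}\cong\bigoplus_{\alpha}\NH{n}\,\wv{1}^{\alpha_1}\cdots\wv{n}^{\alpha_n}$ as left $\NH{n}$-modules with the displayed formula for $d_N$; the commutation identities $\dd{k}(f_k)=-f_{k+1}$, $f_k\dd{k}=\dd{k}f_k+f_{k+1}(1-(x_k-x_{k+1})\dd{k})$ and $f_j\dd{i}=\dd{i}f_j$ for $i\neq j$ are correct (the second uses $\dd{k}(x_k-x_{k+1})+(x_k-x_{k+1})\dd{k}=2$, and one also needs $f_{k+1}\dd{k}=\dd{k}f_{k+1}$ to absorb the last term, which is covered by your third identity); hence $\sum_j\NH{n}f_j$ is two-sided and equals $I_N$, so $H_0\cong\NH{n}^N$.

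The genuine gap is in the acyclicity step: the identification of your complex with $\Hom_{\symL{n}}\bigl(\Pol{n},K(\Pol{n};\hf_N(\und{x}_1),\dots,\hf_{N-n+1}(\und{x}_n))\bigr)$ is wrong as stated. Your own formula has the differential acting by \emph{right} multiplication $g\mapsto g f_{j_l}$, which under $\NH{n}\cong\End_{\symL{n}}(\Pol{n})$ is \emph{pre}composition with the multiplication operator by $f_{j_l}$; applying the covariant functor $\Hom_{\symL{n}}(\Pol{n},-)$ to the Koszul complex instead yields the differential given by \emph{post}composition, i.e.\ left multiplication by $f_{j_l}$ in $\NH{n}$. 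These differ, because $f_j=(-1)^j\hf_{N-j+1}(x_1,\dots,x_j)$ is not symmetric in all $n$ variables, so multiplication by $f_j$ is not central in $\End_{\symL{n}}(\Pol{n})$ (it does not commute with $\dd{j}$); the obvious comparison map is therefore not a chain map, and your exactness argument, as written, proves acyclicity of the wrong complex (this is also the source of your prose saying ``left multiplication'' under a formula that is right multiplication). The conclusion is still true and can be repaired in either of two ways: (a) apply the anti-automorphism of $\extNH{n}$ (or of $\NH{n}$) fixing $x_i$ and $\dd{i}$, which carries $g f_j$ to $f_j\cdot(\mbox{image of }g)$ and identifies your complex with the left-multiplication Koszul complex, after which $\Hom_{\symL{n}}(\Pol{n},-)$-exactness applies; or, more directly, (b) avoid $\End_{\symL{n}}(\Pol{n})$ altogether: $\NH{n}$ is free as a \emph{right} $\Pol{n}$-module (basis $\{\dd{w}\}_{w\in\Sgp_n}$), so your complex is $\NH{n}\otimes_{\Pol{n}}K\bigl(\Pol{n};\hf_N(\und{x}_1),\dots,\hf_{N-n+1}(\und{x}_n)\bigr)$, and flatness plus regularity of this sequence in $\Pol{n}$ — which follows already from each $\hf_{N-j+1}(\und{x}_j)$ being monic in $x_j$, so the Cohen--Macaulay detour is unnecessary — kills all higher homology. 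With either repair the argument is complete.
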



\section{The ring of extended symmetric polynomials}

\subsection{Definition}

\subsubsection{Preliminary definition}

The action of $\extNH{n}$ on the extended polynomial ring
$\extP{n}=\IQ[\xv{1},\dots, \xv{n}]\ten\Wedge{\wv{1},\dots, \wv{n}}$
gives rise to a homomorphism
\[
\extNH{n}\to\End_{\IQ}\lp\extP{n}\rp.
\]
By analogy with the case of symmetric polynomials, we define the ring of extended
symmetric polynomials $\extL{n}$ as
\[
\extL{n}=\bigcap_{i=1}^{n-1}\mathsf{ker}\,\dd{i}=\bigcap_{i=1}^{n-1}\mathsf{im}\,\dd{i}.
\]

\subsubsection{Action of the symmetric group on $\extP{n}$}

The standard action of the symmetric group $\Sgp_n$ on the polynomial ring
$\Pol{n}=\IQ[\xv{1},\dots,\xv{n}]$ lifts to an action on $\extP{n}$. Namely, one
sets
\[
s_i(\xv{j})=\xv{s_i(j)}
\qquad\mbox{and}\qquad
s_i(\wv{j})=\wv{j}+\delta_{ij}(\xv{j}-\xv{j+1})\wv{j+1}
\]
for any $1\leq i\leq n-1$, $1\neq j\leq n$, and extends it to $\extP{n}$ by
$s_i(fg)=s_i(f)s_i(g)$
for any $f,g\in\extP{n}$. With respect to this action, the operators $\dd{i}$
coincide with the standard divided difference operators:
\begin{equation}\label{eq:dd}
\dd{i}=\frac{\id-s_i}{\xv{i}-\xv{i+1}}~.
\end{equation}
In particular, \eqref{eq:modLeib} reduces to the standard Leibniz rule
for divided difference operators
\begin{equation}
\dd{i}(fg)=\dd{i}(f)g+s_i(f)\dd{i}(g).
\end{equation}
\Omit{
\begin{proof}
Set $\wdt{\dd{i}}=\frac{\id-s_i}{\xv{i}-\xv{i+1}}$.
One check immediately that $\dd{i}$ and $\wdt{\dd{i}}$
coincide on the generators of $\extP{n}$. Moreover, they both satisfy
the modified Leibniz rule \eqref{}. Namely,
\begin{align*}
\wdt{\dd{i}}(fg)=&\frac{fg-s_i(fg)}{\xv{i}-\xv{i+1}}=\\
=&\wdt{\dd{i}}(f)g+s_i(f)\wdt{\dd{i}}(g)=\\
=&\wdt{\dd{i}}(f)g+f\wdt{\dd{i}}(g)-(\xv{i}-\xv{i+1})\wdt{\dd{i}}(f)\wdt{\dd{i}}(g)
\end{align*}
\end{proof}
}
It follows that $\extL{n}$ coincides with the subalgebra
of $\Sgp_n$--invariants $\extL{n}=\lp\extP{n}\rp^{\Sgp_n}$.

We now provide an explicit description of $\extL{2}$ and $\extL{3}$.
The general case is discussed in \ref{ss:sizeext} and \ref{ss:basis}.

\begin{remark}
The algebra $\extP{n}$ is endowed with another, more natural action of the symmetric
group (which on the other hand does not extend to an action of $\extNH{n}$).
Namely, for any $w\in\Sgp_n$, one can set $w(\wv{i})=\wv{w(i)}$.
The corresponding subalgebra of $\Sgp_n$--invariants is described by Solomon
in ~\cite{Solomon}, see also~\cite[Chapter 22]{Kane}. In Section \ref{s:Sol-thm}, we discuss the connection between these two actions and their invariants.
\end{remark}

\subsubsection{Case $n=2$}

The algebra $\extP{2}$ is a free module of rank $4$ over $\Pol{2}$. Then
it is easy to see that an element
\[
v=a+b\wv{1}+c\wv{2}+d\wv{1}\wv{2}\in\extL{2}
\]
if and only if
\[
a=s_1(a)
\quad
b=s_1(b)
\quad
c=s_1(c)+s_1(b)(\xv{1}-\xv{2})
\quad
d=s_1(d)
\]
or equivalently
$a,d\in\symL{2}$,
$\dd{1}(c)\in\symL{2}$,
and $b=\dd{1}(c)$.
The general solution to the equation $\dd{1}(c)\in\symL{2}$
has the form $c=Af+B$,
where $f\in\symL{2}$, $A,B\in\Pol{2}$, and
\[
\dd{1}(A)=1\aand\dd{1}(B)=0.
\]
Therefore for any choice of $A$ such that $\dd{1}(A)=1$,
$v\in\extL{2}$ if and only if $v$ has the form
\[
v=a+b(\wv{1}+A\wv{2})+c\wv{2}+d\wv{1}\wv{2}
\]
for some $a,b,c,d\in\symL{2}$. In particular, $\extL{2}$ is
a free module of rank $4$ over $\symL{2}$ with basis $\{1,\wv{1}+A\wv{2}, \wv{2}, \wv{1}\wv{2}\}$,
where $A$ is any solution of $\dd{1}(A)=1$.
Particular choices of $A$ are $\{\xv{1}, -\xv{2}, \frac{1}{2}(\xv{1}-\xv{2})\}$.

\subsubsection{Case $n=3$}

The algebra $\extP{3}$ is a free module of rank $8$ over $\Pol{3}$. Then
\[
v=a+b\wv{1}+c\wv{2}+d\wv{3}+e\wv{1}\wv{2}+f\wv{1}\wv{3}+g\wv{2}\wv{3}+h\wv{1}\wv{2}\wv{3}\in\extL{3}
\]
if and only if 
$a\in\symL{3}$, $b=\dd{1}\dd{2}(d)$, $c=\dd{2}(d)$, 
$e=\dd{2}\dd{1}(g)$, $f=\dd{1}(g)$, $h\in\symL{3}$, and
\[
\begin{array}{rcr}
\dd{1}(d)=0  		& & \dd{1}\dd{2}(d)\in\symL{3}\\
\dd{2}(g)=0 	&&  \dd{2}\dd{1}(g)\in\symL{3}.
\end{array}
\]
It is easy to show that the general solution of the system $\dd{1}(d)=0, \dd{1}\dd{2}(d)\in\symL{3}$
has the form
\[
d=Af_1+Bf_2+f_3,
\]
where $f_1,f_2, f_3\in\symL{3}$ and $A,B\in\Pol{3}$ are any solution of
\[
\begin{array}{rr}
\dd{1}(A)=0, & \quad		\dd{1}(B)=0, \\
\dd{1}\dd{2}(A)=1, &\quad  \dd{2}(B)=1.
\end{array}
\]
Similarly for $g$.
\Omit{the general solution of the system $\dd{2}(g)=0, \dd{1}\dd{2}(g)\in\symL{3}$
has the form
\[
g=Cg_1+Dg_2+g_3,
\]
where $g_1,g_2, g_3\in\symL{3}$ and $D,E\in\Pol{3}$ are any solution of
\[
\begin{array}{rr}
\dd{2}(C)=0, & \quad		\dd{1}(D)=1, \\
\dd{2}\dd{1},(C)=1 &\quad  \dd{2}(D)=0.
\end{array}
\]
}
We conclude that $\extL{3}$ is a free module over $\symL{3}$ of rank $8$
with basis
\[
\begin{array}{ccccc}
1 &  \wv{1}+\dd{2}(A)\wv{2}+A\wv{3} & \wv{2}+ B\wv{3} & \wv{3} & \\
  &\wv{1}\wv{2}+\dd{1}(C)\wv{1}\wv{3}+ C\wv{2}\wv{3} & \wv{1}\wv{3}+ D\wv{2}\wv{3} & \wv{2}\wv{3} &\wv{1}\wv{2}\wv{3}
\end{array}
\]
where $A,B,C,D\in\Pol{3}$ are any solution of
\[
\begin{array}{rrrr}
\dd{1}(A)=0 & 		\dd{1}(B)=0 & \dd{2}(C)=0 & 		\dd{1}(D)=1 \\
\dd{1}\dd{2}(A)=1 &  \dd{2}(B)=1 & \dd{2}\dd{1}(C)=1 &  \dd{2}(D)=0
\end{array}
\]
Particular choices of solutions of the above system are $A\in \{x_1x_2, x_3^2\}$, $B\in \{x_1+x_2, -x_3\}$, $C =x_1^2$, and $D =x_1$.

\subsection{The size of extended symmetric functions}\label{ss:sizeext}

We now discuss the general case for $n\geq3$.

\subsubsection{Notations}

For any binary sequence $\alpha\in\IZ_2^n$, set $\wv{\alpha}=\wv{1}^{\alpha_1}\cdots\wv{n}^{\alpha_n}$.
Then 
\[
\extP{n}=\bigoplus_{\alpha\in\IZ_2^n}\Pol{n}\cdot\wv{\alpha}.
\]
The action of $\Sgp_n$ is concisely described by the formula
\[
s_i(\wv{\alpha})=\wv{\alpha}+\delta_{\alpha_i, 1}\delta_{\alpha_{i+1},0}(\xv{i}-\xv{i+1})\wv{s_i(\alpha)}.
\]
For $k=1,\dots, n-1$ and $\alpha\in\IZ_2^n$, set
\begin{align*}
I_k=&\{\alpha\in\IZ_2^n\;|\; \alpha_k=1, \alpha_{k+1}=0\},\\
J_k=&\{\alpha\in\IZ_2^n\;|\;\alpha_k=0, \alpha_{k+1}=1\}=s_k(I_k),\\
D_{\alpha}=&\{k\;|\; \alpha\in J_k\},
\end{align*}
so that, in particular,
\[
s_k(\wv{\alpha})=
\left\{
\begin{array}{ccc}
\wv{\alpha} & \mbox{if} & \alpha\not\in I_k~,\\
\wv{\alpha}+(\xv{i}-\xv{i+1})\wv{s_i(\alpha)} & \mbox{if} & \alpha\in I_k~.
\end{array}
\right.
\]
For $k=0,1,\dots, n$, let $(\IZ_2^n)_k$ be the subset of strings of length $k$
\[
(\IZ_2^n)_k=\{\alpha\in\IZ_2^n\;|\;|\alpha|=\sum_{i=1}^n\alpha_i=k\}
\]
endowed with the following partial ordering $\prec$.
We say that $\alpha\prec\beta$ if there exists a
sequence in $(\IZ_2^n)_k$
\[\alpha_1=\alpha, \alpha_2, \dots, \alpha_m=\beta\]
where $m>1$ and for any $i=1,\dots, m-1$, $\alpha_i\in I_{r}$ and $\alpha_{i+1}\in J_{r}$
for some $r$.
Let $\tau^{(k)}, \lambda^{(k)}$ be, respectively, the highest and lowest element in
$((\IZ_2^n)_k, \prec)$,i.e. $\tau_i^{(k)}=0$ if and only if $i<n-k+1$ and $\lambda^{(k)}_i=0$
if and only if $i>k$.

\subsubsection{Grassmannian permutations}\label{sss:fund-perm}
A {\em Grassmannian permutation} $w$ is a permutation with a unique descent.  In other words there exists $k\in \{1,\ldots,n-1\}$ such that $w(i) < w(i+1)$ if $i \neq k$ and
$w(k) > w(k+1)$.\\

The Grassmannian permutations with descent $n-k$ are in canonical bijection with elements in $(\IZ_2^n)_k$, as we now describe.   Let $\a\in (\IZ_2^n)_k$ be given.  Let $1\leq \sfv{1}<\cdots <\sfv{n-k}\leq n$ be the indices such that $\a_{\sfv{1}}=\cdots=\a_{\sfv{n-k}}=0$, and let $1\leq \sfu{1}<\cdots<\sfu{k}\leq n$ be the indices such that $\a_{\sfu{1}}=\cdots=\a_{\sfu{k}}=1$.  Define $\sigma_{\alpha}\in\Sgp_n$ 
by
\[
\sigma_{\alpha}(i)=
\left\{
\begin{array}{ccc}
\sfv{i} & \mbox{if} & 1\leq i\leq n-k  \\
\sfu{i-n+k} & \mbox{if} &n-k+1\leq i\leq n
\end{array}
\right.
\]
More concisely, $\sigma_\a$ is the unique minimal length permutation which sends
\[
\tau^{(k)}=(\:\underbrace{0,\ldots,0}_{n-k},\underbrace{1,\ldots,1}_k\:) \mapsto \a.
\]
In particular, $\sigma_{\tau^{(k)}}=\id$.  Note that $\sigma_\a$ is a minimal length
representative of a coset in $\Sgp_n/\Sgp_{n-k}\times\Sgp_k$.\\

For every $\alpha\in(\IZ_2^n)_k$, $\alpha\neq\tau^{(k)}$,
$\sigma_{\alpha}$ has a unique descent at $n-k$, and it is therefore Grassmannian.  Conversely every Grassmannian permutation arises in this way.\\

\subsubsection{Lehmer codes and partitions}
Recall that the Lehmer code of a permutation $w$ is the composition $\Lw= (\Lw_1, \dots, \Lw_n)$, where \[
\Lw_i = \#\{ i< j \maps w(j) < w(i)\}.
\]
We write $\lambda(w)$ for the partition obtained by sorting $\Lw$ into decreasing order.
In particular, the Lehmer code of the Grassmannian permutation $\sigma_{\alpha}$, $\alpha\in(\IZ_2^n)_k$,
is given by
\begin{align*}
L_i^{\alpha}=
\left\{
\begin{array}{ccl}
 m &\mbox{if}& \sfu{m}-m<i\leq\sfu{m+1}-(m+1),\\
 0  & \mbox{if} & \sfu{m+1}-(m+1)<i.
 \end{array}
 \right.
\end{align*}
More concretely, if $1\leq i\leq n-k$, $L_i^{\alpha}$ is the number of ones which appear to the left of the $i$-th zero of $\a$, and $L_i^{\alpha}=0$ otherwise.
In particular, $\Lw_1 \leq \dots \leq \Lw_{n-k}$, and $\Lw_i=0$ for $i>n-k$.
The partition corresponding to $\sigma_{\alpha}$ is then
\[
\lambda_{\alpha}:=\lambda(\sigma_{\alpha})=(m^{\sfr{m}})_{m=k,\dots,1}.
\]
where $\sfr{m}=\sfu{m+1}-\sfu{m}-1$ for every $m=0,\dots, k$ (we impose $\sfu{0}=0, \sfu{k+1}=n+1$).
Notice that $\lambda_{\alpha}$ has at most $n-k$ non zero terms. In fact,
one sees immediately that the biggest possible size of the tableau of shape 
$\lambda_{\alpha}$,
$\alpha\in(\IZ_2^n)_k$, is $(n-k)\times k$.
The conjugate partition is $\lambda'_{\alpha}=(\lambda'_j)_{j=1,\dots, k}$
\[
\lambda'_j=\sum_{m= j}^k\sfr{m}=n+1-\sfu{j}-(k-j+1)=n-k-\sfu{j}+j.
\]

\subsubsection{Examples}
For any $1\leq j<k\leq n$, set $c_{[j,k]}=s_j\cdots s_{k-1}$ 
and $c^{(k)}=c_{[k,n]}\cdots c_{[2,n-k+2]}\cdot c_{[1, n-k+1]}$. We sometimes write $c_{[j]}:= c_{[j,n]}$.
It may be helpful to visualize these elements
$$c^{(k)} \;\; = \;\;
   \hackcenter{\begin{tikzpicture}[scale=0.9]
    \draw[thick] (0,0) .. controls ++(0,.4) and ++(0,-.4) .. (3,2);
    \draw[thick] (1.5,0) .. controls ++(0,.4) and ++(0,-.4) .. (4.5,2);
    \node at (1,.25) {$\cdots$};
    \draw[thick] (2.25,0) .. controls ++(0,.4) and ++(0,-.4) .. (0,2);
    \draw[thick] (3,0) .. controls ++(0,.4) and ++(0,-.4) .. (.75,2);
    \draw[thick] (4.5,0) .. controls ++(0,.4) and ++(0,-.4) .. (2.25,2);
    \node at (3.5,.25) {$\cdots$};
    \node at (0,-.25) {$\scs 1$};
    \node at (1.4,-.25) {$\scs n-k$};
    \node at (2.4,-.25) {$\scs n-k+1$};
    \node at (4.5,-.25) {$\scs n$};
\end{tikzpicture}}
$$
where diagrams are read from bottom to top.
Then it is easy to see that
\[
c^{(k)}(\tau^{(k)})=\lambda^{(k)}
\]
and, for any $\alpha\in(\IZ_2^n)_k$, $\sigma_{\alpha}$ is a subword of $c^{(k)}$.

\subsubsection{Main result}

The rest of this section is devoted to prove the following

\begin{theorem}\label{thm:ext-sym}\hfill
\begin{enumerate}[(i)]
\item The ring of extended symmetric polynomials $\extL{n}$ is a free module over $\symL{n}$
of rank $2^n$.
\Omit{
\item For each $k\in \{0,1,\ldots,n\}$ and each $\a\in (\IZ_2^n)_k$, choose a polynomial $p_\a\in \Pol{n}^{\Sgp_{n-k}\times\Sgp_k}$ such that
\begin{equation}\label{eq:palphaNormalization}
\partial_{\sigma_\a}(p_\a)=1 \ \ \ \ \ \ \ \ (\Pol{n}^{\Sgp_{n-k}\times\Sgp_k}),
\end{equation}
and define
\[
\swv{\alpha}(p_\a):=\wv{\alpha}+\sum_{\beta\succ\alpha}\dd{\sigma_{\beta}}(p_{\alpha})\cdot\wv{\beta}
\]
Then as $\symL{n}$--modules
\[
\extL{n}\simeq\bigoplus_{\alpha\in\IZ_2^n}\symL{n}\cdot\swv{\alpha}(p_\a)
\]
Moreover every $\symL{n}$--basis of $\extL{n}$ has this form.
}
\item For any collection of polynomials $\{p_{\alpha}\}_{\alpha\in\IZ_2^n}$ satisfying
\begin{equation}\label{eq:palphaNormalization}
p_{\alpha}\in\Pol{n}^{\Sgp_{n-|\alpha|}\times \Sgp_{|\alpha|}}
\aand
\dd{\sigma_{\alpha}}p_{\alpha}=1
\end{equation}
there is an isomorphism of $\symL{n}$--modules
\[
\extL{n}\simeq\bigoplus_{\alpha\in\IZ_2^n}\symL{n}\cdot\swv{\alpha}(p_\a)
\quad\mbox{where}\quad
\swv{\alpha}(p_\a):=\wv{\alpha}+\sum_{\beta\succ\alpha}\dd{\sigma_{\beta}}(p_{\alpha})\cdot\wv{\beta}
\]
\item Multiplication in $\extL{n}$ induces a ring isomorphism
\[
\extL{n}\simeq\symL{n}\ten\Wedge{\swv{1}, \dots, \swv{n}}.
\]
\item Multiplication in $\extP{n}$ induces a ring isomorphism
$\extP{n}\simeq \H_n\ten\extL{n}$, where $\H_n\subset \Pol{n}$ is the subspace 
spanned by either of the sets $\left\{ \Sp{w} \mid w \in \Sgp_n   \right\}$
or  $\left\{  x_1^{i_1} \dots x_n^{i_n} \mid 0 \leq i_k \leq n-k .  \right\}$.
This gives rise to a canonical ring isomorphism
\[
\End_{\extL{n}}(\extP{n})\simeq\mathsf{Mat}(n!, \extL{n}).
\]
\end{enumerate}
\end{theorem}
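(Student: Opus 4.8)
The plan is to work throughout with the description $\extL{n}=(\extP{n})^{\Sgp_n}$ and the $\Pol{n}$--module decomposition $\extP{n}=\bigoplus_{\alpha\in\IZ_2^n}\Pol{n}\cdot\wv{\alpha}$. The structural observation driving everything is that the $\Sgp_n$--action is \emph{unitriangular} with respect to $\prec$: since $s_i(\wv{\alpha})=\wv{\alpha}$ for $\alpha\notin I_i$ and $s_i(\wv{\alpha})=\wv{\alpha}+(\xv{i}-\xv{i+1})\wv{s_i\alpha}$ with $s_i\alpha\succ\alpha$ for $\alpha\in I_i$, the $\Pol{n}$--spans of up-closed sets of $\wv{\beta}$'s are $\Sgp_n$--submodules, and choosing a linear extension of each $((\IZ_2^n)_k,\prec)$ (largest elements first) gives a finite $\Sgp_n$--stable filtration of $\extP{n}$ whose subquotients are each $\Pol{n}$ with the \emph{standard} $\Sgp_n$--action, hence have invariants $\symL{n}$. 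Since taking invariants is left exact, every $v\in\extL{n}$ has a well-defined leading coefficient in $\symL{n}$ along this filtration. To prove (i) and (ii) I would therefore (a) show each $\swv{\alpha}(p_\a)$ lies in $\extL{n}$, and (b) note that the $\swv{\alpha}(p_\a)$ are unitriangular with respect to $\{\wv{\alpha}\}$ (coefficient of $\wv{\alpha}$ equal to $1$, coefficient of $\wv{\beta}$ zero when $\beta\not\succeq\alpha$); then a downward induction along the filtration — subtracting $c\,\swv{\beta}(p_\b)$ with $c\in\symL{n}$ to kill the current leading term — shows $\{\swv{\alpha}(p_\a)\}$ spans $\extL{n}$ over $\symL{n}$, and unitriangularity gives linear independence, which also yields the rank $2^n$. (Valid families exist: e.g.\ $p_\a=\Sp{\sigma_\a}$, which is $\Sgp_{n-k}\times\Sgp_k$--invariant with $k=|\alpha|$ and satisfies $\dd{\sigma_\a}\Sp{\sigma_\a}=\Sp{\id}=1$.)

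The heart of the matter — and the step I expect to be hardest — is (a): that $\swv{\alpha}(p_\a)=\sum_\gamma g_\gamma\wv{\gamma}$, with $g_\alpha=1$, $g_\beta=\dd{\sigma_\beta}(p_\a)$ for $\beta\succ\alpha$ and $g_\gamma=0$ otherwise, is $\Sgp_n$--invariant. Expanding $s_i$ of the sum and comparing coefficients, $s_i$--invariance is equivalent to the pair of conditions: $g_\gamma\in\ker\dd{i}$ for all $\gamma\notin J_i$, and $g_{s_i\gamma}=\dd{i}(g_\gamma)$ for all $\gamma\in J_i$. I would verify these from three facts about Grassmannian permutations and divided differences: that $\sigma_\gamma$ has a left descent at $i$ precisely when $\gamma\in I_i$ (so $\dd{i}\dd{\sigma_\gamma}=0$ if $\gamma\in I_i$, while $\dd{i}\dd{\sigma_\gamma}=\dd{s_i\sigma_\gamma}$ length-additively otherwise); that $s_i\sigma_\gamma=\sigma_{s_i\gamma}$ when $\gamma\in J_i$, because $s_i\gamma\lessdot\gamma$ is a covering relation in $\prec$ and a minimal coset representative times a parabolic element has additive length; and that $p_\a\in\Pol{n}^{\Sgp_{n-k}\times\Sgp_k}$ lies in $\ker\dd{j}$ for every $j\neq n-k$, so $\dd{w}(p_\a)=0$ unless $w$ is a minimal coset representative $\sigma_\delta$, which together with the normalization $\dd{\sigma_\a}(p_\a)=1$ pins down the coefficients and forces $\dd{\sigma_\gamma}(p_\a)$ to vanish in exactly the cases where $\wv{\gamma}$ is absent from $\swv{\alpha}(p_\a)$. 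The delicate bookkeeping is in matching these vanishing outputs to the absent terms of the formula, especially when $\gamma$ is incomparable to $\alpha$.

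For (iii) I would set $\swv{j}:=\swv{\epsilon_j}$, where $\epsilon_j$ is the unit vector with a $1$ in slot $j$. Since $\extP{n}$ is super-commutative in the $\wv$'s and each $\swv{j}$ has $\wv$--degree $1$, the $\swv{j}$ pairwise anticommute and square to zero inside $\extL{n}$, so multiplication defines a morphism of $\symL{n}$--algebras $\Phi\maps\symL{n}\ten\Wedge{\swv{1},\dots,\swv{n}}\to\extL{n}$. Both sides are free $\symL{n}$--modules of rank $2^n$ (the source visibly, the target by part (i)), and a surjection between free modules of equal finite rank over a commutative ring is an isomorphism, so it remains to see $\Phi$ is surjective. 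For that I would check that the products $\swv{j_1}\cdots\swv{j_k}$ ($j_1<\dots<j_k$) are again unitriangular with respect to $\{\wv{\alpha}\}$ — using that $\prec$ is compatible with disjoint unions of supports, so that $\wv{j_1}\cdots\wv{j_k}=\pm\wv{\alpha}$ (with $\alpha$ of support $\{j_1,\dots,j_k\}$) is $\prec$--minimal among the terms of the product — whence they form an $\symL{n}$--basis of $\extL{n}$ by the argument of the first paragraph, and in particular span.

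For (iv) I would use $\Pol{n}=\bigoplus_{w\in\Sgp_n}\Sp{w}\,\symL{n}$ (recalled in the introduction), so that $\extP{n}=\bigoplus_{w,\alpha}\Sp{w}\wv{\alpha}\,\symL{n}$ is $\symL{n}$--free on $\{\Sp{w}\wv{\alpha}\}$. Expanding each $\Sp{w}\,\swv{\alpha}(p_\a)$ in this basis, the transition matrix to $\{\Sp{w}\wv{\alpha}\}$ is block triangular along a linear extension of $\prec$ with all diagonal blocks equal to the $n!\times n!$ identity, hence lies in $GL(\symL{n})$; so $\{\Sp{w}\,\swv{\alpha}(p_\a)\}$ is also an $\symL{n}$--basis of $\extP{n}$. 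Regrouping by $w$ gives $\extP{n}=\bigoplus_{w}\Sp{w}\cdot\extL{n}$, i.e.\ $\extP{n}$ is free as a left $\extL{n}$--module on $\H_n=\{\Sp{w}\}$, which is the claimed multiplication isomorphism $\extP{n}\simeq\H_n\ten\extL{n}$. Since $\extP{n}$ is then free of rank $n!$ over $\extL{n}$, we obtain $\End_{\extL{n}}(\extP{n})\simeq\mathsf{Mat}(n!,\extL{n})$.
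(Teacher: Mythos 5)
Your overall architecture is close to the paper's: your coefficientwise criterion for $s_i$--invariance is exactly Proposition \ref{prop:characterization1}, your filtration/leading--coefficient elimination plays the role the paper assigns to the maps $\Phi_k\colon\Pol{n}^{\Sgp_{n-k}\times\Sgp_k}\to\extL{n}$ and a rank count, and your treatments of (iii) and (iv) (anticommutation of $\omega$--degree--one invariants plus triangularity, then a unipotent change of basis combined with $\Pol{n}\simeq\H_n\ten\symL{n}$) are essentially the paper's. The descent facts you invoke are also correct: $\sigma_\gamma$ has a left descent at $i$ exactly when $\gamma\in I_i$, and $s_i\sigma_\gamma=\sigma_{s_i\gamma}$ length--additively for $\gamma\in J_i$.

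The gap is in the step you yourself single out as the heart of the matter: it is \emph{not} true that \eqref{eq:palphaNormalization} ``forces $\dd{\sigma_\gamma}(p_\alpha)$ to vanish in exactly the cases where $\wv{\gamma}$ is absent from $\swv{\alpha}(p_\alpha)$''. Take $n=4$, $\alpha=(0,1,1,0)$ and $p_\alpha=(x_1+x_2)^2$. Then $p_\alpha\in\Pol{4}^{\Sgp_2\times\Sgp_2}$ and, since $\sigma_\alpha=s_3s_2$, one has $\dd{\sigma_\alpha}(p_\alpha)=\dd{3}\dd{2}(x_1+x_2)^2=1$, so \eqref{eq:palphaNormalization} holds; yet for the incomparable string $\gamma=(1,0,0,1)$, with $\sigma_\gamma=s_1s_2$, one computes $\dd{\sigma_\gamma}(p_\alpha)=\dd{1}\dd{2}(x_1+x_2)^2=1\neq0$. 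Concretely, $\swv{\alpha}(p_\alpha)=\wv{\alpha}+(2x_1+x_2+x_3)\,\wv{(0,1,0,1)}+(x_1+x_2)^2\,\wv{(0,0,1,1)}$ fails your own invariance criterion at $i=1$: since $(0,1,0,1)\in J_1$, invariance would force the coefficient of $\wv{(1,0,0,1)}$ to be $\dd{1}(2x_1+x_2+x_3)=1$, whereas it is $0$. So the vanishing at $\gamma\not\succeq\alpha$ is an \emph{extra} property of the chosen family (it is automatic when $|\alpha|\leq 1$, because then every $\sigma_\gamma$ with $\gamma\prec\alpha$ factors length--additively through $\sigma_\alpha$ and kills the constant $\dd{\sigma_\alpha}(p_\alpha)=1$, and it is what the paper checks for its Schubert and dual Schubert families in \ref{sss:schubert-2} and \ref{sss:dual-schubert}), not a formal consequence of the normalization; your step (a), hence your proofs of (i) and (ii) and the unitriangularity used downstream, all rest on it. The paper's route to membership is different and avoids this for (i): Corollary \ref{cor:membership} shows that for \emph{any} $F\in\Pol{n}^{\Sgp_{n-k}\times\Sgp_k}$ the full sum $\sum_{\gamma\in(\IZ_2^n)_k}\dd{\sigma_\gamma}(F)\,\wv{\gamma}$ is invariant, proved by writing $F=\dd{w}(G)$ with $w$ the longest element of $\Sgp_{n-k}\times\Sgp_k$ and using that $\ell(s_i\sigma_\gamma w)=\ell(\sigma_\gamma w)-1$ iff $i\notin D_\gamma$; no choice of $p_\alpha$ is needed there. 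To repair your argument you should either adopt that route, or fix a concrete family such as $p_\alpha=\Sp{\sigma_\alpha}$ and actually prove the required vanishing (i.e.\ that $\ell(\sigma_\alpha\sigma_\gamma^{-1})=\ell(\sigma_\alpha)-\ell(\sigma_\gamma)$ forces $\gamma\succeq\alpha$), rather than deduce it from \eqref{eq:palphaNormalization} alone.
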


\begin{remark}
In \ref{ss:basis} we construct examples of $p_\a\in \Pol{n}^{\Sgp_{n-|\a|}\times\Sgp_{|\a|}}$ satisfying (\ref{eq:palphaNormalization}), for each $\a$.
\end{remark}

The proof is carried out in \ref{sss:char}--\ref{sss:end-pf-ext-sym}.

\subsubsection{First characterization of $\extL{n}$}\label{sss:char}

\begin{proposition}\label{prop:characterization1}
Let $v=\sum_{\alpha}f_\alpha\wv{\alpha}\in\extP{n}$.
The following are equivalent.
\begin{enumerate}[(i)]
\item $v\in\extL{n}$
\item For every $i=1,\dots, n-1$,
\begin{equation}\label{eq:sys-1}
\dd{i}(f_{\alpha})=
\left\{
\begin{array}{ccc}
0 & \mbox{if} & \alpha\not\in J_i,\\
f_{s_i(\alpha)} & \mbox{if} & \alpha\in J_i.
\end{array}
\right.
\end{equation}
\item For every $\alpha\in\IZ_2^n$,
\begin{equation}\label{eq:sys-2}
\dd{i}(f_{\alpha})=
\left\{
\begin{array}{ccc}
0 & \mbox{if} & i\not\in D_{\alpha},\\
f_{s_i(\alpha)} & \mbox{if} & i\in D_{\alpha}.
\end{array}
\right.
\end{equation}
\end{enumerate}
\end{proposition}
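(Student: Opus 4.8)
The plan is to prove the three conditions equivalent by showing (ii) $\Leftrightarrow$ (iii) is essentially a matter of unwinding definitions, and then establishing (i) $\Leftrightarrow$ (ii) by expanding the condition $\dd{i}(v)=0$ in the $\wv{\alpha}$-basis of $\extP{n}$. The starting point is the formula
\[
\dd{i}(fg)=\dd{i}(f)g+s_i(f)\dd{i}(g)
\]
together with the explicit action of $s_i$ on the basis monomials $\wv{\alpha}$, namely $s_i(\wv{\alpha})=\wv{\alpha}$ if $\alpha\notin I_i$ and $s_i(\wv{\alpha})=\wv{\alpha}+(\xv{i}-\xv{i+1})\wv{s_i(\alpha)}$ if $\alpha\in I_i$, and the fact that $\dd{i}(\wv{\alpha})=-\wv{s_i(\alpha)}$ when $\alpha\in I_i$ and $\dd{i}(\wv{\alpha})=0$ otherwise (this last fact should be checked quickly from $\dd{i}(\wv{j})=-\delta_{ij}\wv{j+1}$ and the modified Leibniz rule, or from \eqref{eq:dd}).

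First I would compute $\dd{i}(v)=\dd{i}\big(\sum_\alpha f_\alpha\wv{\alpha}\big)$. For a fixed index $i$, partition the index set $\IZ_2^n$ into the three pieces $I_i$, $J_i=s_i(I_i)$, and the complement (those $\alpha$ with $\alpha_i=\alpha_{i+1}$). For $\alpha$ in the complement, $s_i(\wv{\alpha})=\wv{\alpha}$ and $\dd{i}(\wv{\alpha})=0$, so the contribution is $\dd{i}(f_\alpha)\wv{\alpha}$. For $\alpha\in I_i$ we get $\dd{i}(f_\alpha)\wv{\alpha}+s_i(f_\alpha)\dd{i}(\wv{\alpha})=\dd{i}(f_\alpha)\wv{\alpha}-s_i(f_\alpha)\wv{s_i(\alpha)}$; for $\beta=s_i(\alpha)\in J_i$ we get $\dd{i}(f_\beta)\wv{\beta}$. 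Now collect the coefficient of each basis vector $\wv{\gamma}$. The coefficient of $\wv{\gamma}$ with $\gamma\in I_i$ (or in the complement) is simply $\dd{i}(f_\gamma)$. The coefficient of $\wv{\beta}$ with $\beta\in J_i$ receives two contributions: $\dd{i}(f_\beta)$ from the $\beta$-term itself, and $-s_i(f_{s_i(\beta)})$ from the $\alpha=s_i(\beta)\in I_i$ term. Setting $\dd{i}(v)=0$ thus gives: $\dd{i}(f_\gamma)=0$ for $\gamma\notin J_i$, and $\dd{i}(f_\beta)=s_i(f_{s_i(\beta)})$ for $\beta\in J_i$. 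The last step is to note that for $\beta\in J_i$ the element $\alpha=s_i(\beta)$ lies in $I_i$, so $s_i(f_\alpha)=f_\alpha+(\xv{i}-\xv{i+1})\dd{i}(f_\alpha)\cdot(\text{something})$ — more cleanly, since $\alpha\notin J_i$ we already know $\dd{i}(f_\alpha)=0$ from the first batch of equations, hence $s_i(f_\alpha)=f_\alpha-(\xv{i}-\xv{i+1})\dd{i}(f_\alpha)=f_\alpha$ by \eqref{eq:dd}, and the relation becomes $\dd{i}(f_\beta)=f_{s_i(\beta)}$. This is exactly \eqref{eq:sys-1}, giving (i) $\Leftrightarrow$ (ii). Finally (ii) $\Leftrightarrow$ (iii) is immediate since $\bigcup_i J_i = \{\alpha : D_\alpha \neq \emptyset\}$ and, by definition, $i\in D_\alpha$ exactly when $\alpha\in J_i$; so the system "$\dd{i}(f_\alpha)=0$ unless $\alpha\in J_i$, and $=f_{s_i(\alpha)}$ if $\alpha\in J_i$", quantified over all $i$, says precisely the same thing as the per-$\alpha$ statement \eqref{eq:sys-2}.

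The main obstacle, and the one subtlety to be careful about, is the coupling in the $J_i$-coefficient: a priori the vanishing of $\dd{i}(v)$ gives $\dd{i}(f_\beta)=s_i(f_{s_i(\beta)})$ rather than the cleaner $\dd{i}(f_\beta)=f_{s_i(\beta)}$, and one must use the already-derived equation $\dd{i}(f_{s_i(\beta)})=0$ (valid since $s_i(\beta)\in I_i\subset\IZ_2^n\setminus J_i$) to replace $s_i(f_{s_i(\beta)})$ by $f_{s_i(\beta)}$. Logically one should derive the "$=0$ unless in $J_i$" equations first, then feed them into the $J_i$-equations; the bookkeeping is entirely linear-algebraic once the basis decomposition is set up, so no serious difficulty remains beyond keeping the index sets $I_i,J_i,D_\alpha$ straight.
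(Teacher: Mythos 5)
Your proposal is correct and follows essentially the same route as the paper: expand $v$ in the $\Pol{n}$-basis $\{\wv{\alpha}\}$, compare coefficients to get the system $\dd{i}(f_\alpha)=0$ for $\alpha\notin J_i$ and $\dd{i}(f_\alpha)=s_i(f_{s_i(\alpha)})$ for $\alpha\in J_i$, and then use the first batch of equations (since $s_i(\alpha)\in I_i\not\subset J_i$) to replace $s_i(f_{s_i(\alpha)})$ by $f_{s_i(\alpha)}$, with (ii) $\Leftrightarrow$ (iii) being pure reindexing via $i\in D_\alpha\iff\alpha\in J_i$. The only cosmetic difference is that you impose $\dd{i}(v)=0$ via the Leibniz rule while the paper imposes $s_i(v)=v$; these differ by the unit factor $(\xv{i}-\xv{i+1})$ and yield identical coefficient equations.
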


\begin{proof}
Clearly, $(ii)$ and $(iii)$ are equivalent. Now, let $v=\sum_{\alpha}f_{\alpha}\wv{\alpha}$,
$f_{\alpha}\in\Pol{n}$. For every $i=1,\dots, n-1$,
\begin{align*}
s_i(v)=&\sum_{\alpha\in\IZ_2^n}s_i(f_{\alpha})+\sum_{\alpha\in I_i}(\xv{i}-\xv{i+1})s_i(f_{\alpha})\wv{s_i(\alpha)}=\\
=&\sum_{\alpha\not\in J_i}s_i(f_{\alpha})\wv{\alpha}+
\sum_{\alpha\in J_i}\lp s_i(f_{\alpha})+s_i(f_{s_i(\alpha)})(\xv{i}-\xv{i+1})\rp\wv{\alpha}
\end{align*}
Therefore $v\in\extL{n}$ if and only if, for every $i=1, \dots, n-1$,
\[
\dd{i}(f_{\alpha})=
\left\{
\begin{array}{ccc}
0 & \mbox{if} & \alpha\not\in J_i,\\
s_i(f_{s_i(\alpha)}) & \mbox{if} & \alpha\in J_i.
\end{array}
\right.
\]
Finally, one observes that for every $\alpha\in J_i$, 
$s_i(\alpha)\not\in J_i$. Therefore,
$s_i(f_{s_i(\alpha)})=f_{s_i(\alpha)}$
and $(i)$ is equivalent to $(ii)$.
\end{proof}

\subsubsection{Simplification}

The system of equations \eqref{eq:sys-2} preserves $|\alpha|$, i.e.
there are $n+1$ independent sets of equations,
for $k=0,1,\dots, n$,
\begin{equation*}
\forall\alpha\in(\IZ_2^n)_k
\qquad
\dd{i}(f_{\alpha})=
\left\{
\begin{array}{ccc}
0 & \mbox{if} & i\not\in D_{\alpha},\\
f_{s_i(\alpha)} & \mbox{if} & i\in D_{\alpha}
\end{array}
\right.
\end{equation*}
Let $\tau^{(k)}, \lambda^{(k)}$ be, as before, the highest and lowest element in
$(\IZ_2^n)_k$ with respect to $\prec$.
Then it follows from \eqref{eq:sys-2} that $f_{\lambda^{(k)}}\in\symL{n}$ and,
for every $\alpha\in(\IZ_2^n)_k$,
\[
f_{\alpha}=\dd{\sigma_{\alpha}}(f_{\tau^{(k)}})
\]
In particular, any solution of \eqref{eq:sys-2}
is determined by the elements $f_{\tau^{(k)}}\in\Pol{n}$, $k=0,1,\dots, n$.
More specifically, we have the following

\begin{corollary}\label{cor:membership}
Let $v=\sum_{\alpha}f_\alpha\wv{\alpha}\in\extP{n}$ with $\a\in \IZ_2^n$ and 
$f_\alpha\in \Pol{n}$. Then $v\in\extL{n}$ if and only if, for any $k=0,\dots, n-1$, the 
elements $F_k:=f_{\tau^{(k)}}$ satisfy
\begin{enumerate}[(i)]
\item $F_k\in\Pol{n}^{\Sgp_{n-k}\times S_k}$;
\item for every $\alpha\in(\IZ_2^n)_k$, $f_{\alpha}=\dd{\sigma_{\alpha}}(F_k)$.
\end{enumerate}
\end{corollary}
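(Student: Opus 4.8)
The plan is to derive the corollary from Proposition~\ref{prop:characterization1} and a short combinatorial analysis of the Grassmannian permutations $\sigma_\alpha$ of \ref{sss:fund-perm}. By that proposition $v=\sum_\alpha f_\alpha\wv{\alpha}$ lies in $\extL{n}$ exactly when the system \eqref{eq:sys-2} holds, and since the operators $\dd{i}$ (equivalently the $s_i$) preserve $|\alpha|$, this system decouples into blocks indexed by $k=|\alpha|$. I therefore fix $k$, write $W_k:=\Sgp_{n-k}\times\Sgp_k$ for the stabilizer of $\tau^{(k)}$, and work with the block $\{f_\alpha:|\alpha|=k\}$; throughout I use that $\dd{w}$ depends only on $w\in\Sgp_n$ and that $\dd{uv}=\dd{u}\dd{v}$ whenever $\ell(uv)=\ell(u)+\ell(v)$.

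The technical core is a length comparison for $s_i\sigma_\alpha$. Recall that $\ell(\sigma_\alpha)$ is the number of pairs formed by a $1$ of $\alpha$ lying to the left of a $0$ of $\alpha$, and that $\sigma_\alpha^{-1}$ carries the $0$-positions of $\alpha$ increasingly onto $\{1,\dots,n-k\}$ and the $1$-positions increasingly onto $\{n-k+1,\dots,n\}$. From this one reads off three cases. If $\alpha\in J_i$ (so $\alpha_i=0$, $\alpha_{i+1}=1$), then $\sigma_\alpha^{-1}(i)\le n-k<\sigma_\alpha^{-1}(i+1)$, whence $\ell(s_i\sigma_\alpha)=\ell(\sigma_\alpha)+1$; moreover $s_i\sigma_\alpha$ sends $\tau^{(k)}$ to $s_i(\alpha)\in I_i$ and has length equal to the minimal length in its $W_k$-coset, so $\sigma_{s_i(\alpha)}=s_i\sigma_\alpha$ by uniqueness of minimal coset representatives. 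Symmetrically, if $\alpha\in I_i$ then $\ell(s_i\sigma_\alpha)=\ell(\sigma_\alpha)-1$. Finally, if $\alpha_i=\alpha_{i+1}$, the two equal entries occupy consecutive $0$-positions (or $1$-positions), say the $a$-th and $(a+1)$-st, so $\sigma_\alpha(a)=i$ and $\sigma_\alpha(a+1)=i+1$; a direct check gives $s_i\sigma_\alpha=\sigma_\alpha s_a$, where $s_a$ is a Coxeter generator of $W_k$, and $\ell(s_i\sigma_\alpha)=\ell(\sigma_\alpha)+1$. These translate into the operator identities $\dd{i}\dd{\sigma_\alpha}=\dd{\sigma_{s_i(\alpha)}}$ when $\alpha\in J_i$; $\dd{i}\dd{\sigma_\alpha}=0$ when $\alpha\in I_i$ (a reduced word for $\sigma_\alpha$ then begins with $s_i$, so $\dd{i}^2=0$ applies); and $\dd{i}\dd{\sigma_\alpha}=\dd{\sigma_\alpha}\dd{a}$ when $\alpha_i=\alpha_{i+1}$. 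I expect the last of these to be the main obstacle, as it is the only point where the parabolic $W_k$ enters; the other two drop out of the length count.

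Granting this, the converse implication is immediate: assume (i) and (ii), so $f_\alpha=\dd{\sigma_\alpha}(F_k)$ with $F_k\in\Pol{n}^{W_k}$. For $i\in D_\alpha$, i.e.\ $\alpha\in J_i$, the first identity gives $\dd{i}(f_\alpha)=\dd{\sigma_{s_i(\alpha)}}(F_k)=f_{s_i(\alpha)}$. For $i\notin D_\alpha$ I must check $\dd{i}(f_\alpha)=0$: if $\alpha\in I_i$ this is the second identity, and if $\alpha_i=\alpha_{i+1}$ then $\dd{i}(f_\alpha)=\dd{\sigma_\alpha}\bigl(\dd{a}(F_k)\bigr)=0$ because $F_k$ is $s_a$-invariant. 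Hence \eqref{eq:sys-2} holds and $v\in\extL{n}$.

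For the forward implication, suppose $v\in\extL{n}$, so \eqref{eq:sys-2} holds. Since $\tau^{(k)}=(0,\dots,0,1,\dots,1)$ with $n-k$ zeros, one has $D_{\tau^{(k)}}=\{n-k\}$, so \eqref{eq:sys-2} at $\alpha=\tau^{(k)}$ reads $\dd{i}(F_k)=0$ for all $i\ne n-k$; as $\bigcap_{i\ne n-k}\ker\dd{i}=\Pol{n}^{W_k}$, this is (i). For (ii) I induct on $\ell(\sigma_\alpha)$. The case $\ell(\sigma_\alpha)=0$ forces $\sigma_\alpha=\id$, hence $\alpha=\tau^{(k)}$ and $f_\alpha=F_k=\dd{\id}(F_k)$. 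If $\alpha\ne\tau^{(k)}$, then $\alpha\in I_r$ for some $r$ (a binary string with no adjacent $(1,0)$ equals $\tau^{(k)}$), and with $\beta:=s_r(\alpha)\in J_r$ the first length case gives $\sigma_\alpha=s_r\sigma_\beta$ and $\ell(\sigma_\beta)=\ell(\sigma_\alpha)-1$. The inductive hypothesis yields $f_\beta=\dd{\sigma_\beta}(F_k)$, while \eqref{eq:sys-2} at $\beta\in J_r$ gives $f_\alpha=\dd{r}(f_\beta)=\dd{r}\dd{\sigma_\beta}(F_k)=\dd{\sigma_\alpha}(F_k)$. This completes the induction, and the proof.
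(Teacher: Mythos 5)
Your proof is correct, and it follows the same overall skeleton as the paper (reduce to Proposition~\ref{prop:characterization1}, then exploit Coxeter-length facts about the Grassmannian permutations $\sigma_\alpha$), but the mechanism in the sufficiency direction is genuinely different. The paper writes $F_k=\dd{w}(G_k)$, where $w$ is the longest element of $\Sgp_{n-k}\times\Sgp_k$ (invoking the fact that $\Pol{n}^{\Sgp_{n-k}\times\Sgp_k}=\im\dd{w}$), and then kills $\dd{i}\dd{\sigma_\alpha}\dd{w}$ for $i\notin D_\alpha$ by a length count for $s_i\sigma_\alpha w$. You instead prove the three-case identity $\dd{i}\dd{\sigma_\alpha}=\dd{\sigma_{s_i(\alpha)}}$ (for $\alpha\in J_i$), $=0$ (for $\alpha\in I_i$), $=\dd{\sigma_\alpha}\dd{a}$ with $s_a$ a generator of the parabolic (for $\alpha_i=\alpha_{i+1}$), and use the $\Sgp_{n-k}\times\Sgp_k$-invariance of $F_k$ directly to kill the last case; this avoids the surjectivity fact $\im\dd{w}=\Pol{n}^{\Sgp_{n-k}\times\Sgp_k}$, which the paper only asserts, and it also makes explicit the verification that $\dd{i}(f_\alpha)=f_{s_i(\alpha)}$ for $i\in D_\alpha$, a point the paper's sufficiency argument leaves implicit. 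Likewise, for necessity of (ii) you supply the induction on $\ell(\sigma_\alpha)$ (via $\sigma_\alpha=s_r\sigma_\beta$ with $\beta=s_r(\alpha)\in J_r$) that the paper compresses into the preceding ``Simplification'' remark. The only blemish is cosmetic: in the case $\alpha_i=\alpha_{i+1}=1$ the relevant indices are the $(n-k+a)$-th and $(n-k+a+1)$-st slots, so the generator is $s_{n-k+a}$ of the $\Sgp_k$ factor rather than $s_a$ literally; the conclusion $s_i\sigma_\alpha=\sigma_\alpha s_b$ for a Coxeter generator $s_b$ of the parabolic, which is all you use, is unaffected.
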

\begin{proof}
Note that if $\a=\tau^{(k)}$, then $D_\a=\{n-k\}$.  Thus, the necessity of conditions $(i)$ and $(ii)$ are easy consequences of condition (iii) of Proposition \ref{prop:characterization1}.

Now we show that $(i)$ and $(ii)$ are sufficient conditions for membership $v\in\extL{n}$.  Fix $k\in \{1,\ldots,n\}$, and suppose $F_k\in \Pol{n}$ is given and satisfies $(i)$.  Define $f_\a:=\partial_{\sigma_\a}(F_k)$ for all $\a\in (\IZ_2^n)_k$, and set $v:=\sum_{\a\in (\Z_2^n)_k} f_\a \omega_\a$.  We must show that $\partial_i (f_\a)=0$ whenever $i\not\in D_\a$.
Let $w$ be the longest element of $\Sgp_{n-k}\times\Sgp_k\subset \Sgp_n$.  By $(i)$, $F_k\in \Pol{n}^{\Sgp_{n-k}\times\Sgp_k}$ is symmetric in the first $n-k$ variables and the last $k$ variables.  It follows that $F_k=\partial_w(G_k)$ for some polynomial $G_k$.  This is a straightforward generalization of the fact that $\symL{n}=\im \partial_{w_0}\subset \Pol{n}$, and follows easily from properties of the nilHecke algebra.

From the definition of $D_\a$, it is clear that
\[
\ell(s_i \sigma_\a \partial_w) = \ell(\sigma_\a\partial_w) -1
\]
if and only if $i\not \in D_\a$.  Thus, if $i\not\in D_\a$, then
\[
\partial_i(f_\a)=\partial_i\partial_{\sigma_\a}\partial_w(G_k)=0
\]
by properties of the divided difference operators.  This completes the proof.
\end{proof}

\subsubsection{Proof of Theorem \ref{thm:ext-sym}}\label{sss:end-pf-ext-sym}

Corollary \ref{cor:membership} gives us a map of $\symL{n}$-modules $\Phi_k:\Pol{n}^{\Sgp_{n-k}\times\Sgp_k}\rightarrow \extL{n}$ defined by
\[
\Phi_k(F) := \sum_{\a\in (\Z_2^n)_k} \partial_{\sigma_\a}(F) \omega_\a.
\]
Clearly $\Phi_k$ is injective, since $F$ can be recovered as the coefficient of $\omega_{\tau^{(k)}}$ in $\Phi_k(F)$.  By  Corollary \ref{cor:membership}, $\Phi_k$ surjects onto the component of $\extL{n}$ consisting of elements which are degree $k $ in the exterior variables $\omega_i$.  Since the dimension of $\Pol{n}^{\Sgp_{n-k}\times\Sgp_k}$ over $\symL{n}=\Pol{n}^{\Sgp_n}$ is $\binom{n}{k}$, statement $(i)$ of Theorem \ref{thm:ext-sym} follows.

Now, let $\{p_{\alpha}\}_{\alpha\in\IZ_2^n}$ be a solution of \eqref{eq:palphaNormalization} and set
\[
\swv{\alpha}(p_\a)=\wv{\alpha}+\sum_{\beta\succ\alpha}\dd{\sigma_{\beta}}(p_{\alpha})\cdot\wv{\beta}
\]
By \ref{cor:membership}, the elements $\swv{\a}(p_\a)$ belong to $\extL{n}$ and they are linearly
independent, since they are triangular with respect to $\{\wv{\alpha}\}$. By a dimension argument this induces an isomorphism of $\symL{n}$--modules
\[
\extL{n}\simeq\bigoplus_{\alpha\in\IZ_2^n}\symL{n}\cdot\swv{\alpha}(p_\a)
\]

This proves Theorem \ref{thm:ext-sym} $(ii)$.

For $(iii)$, suppose we have chosen elements
\[
\swv{i}=\wv{i} + \sum_{j>i} f_j\wv{j}\in \extL{n}
\]
for $i\in \{1,\ldots,n\}$.   Since these elements are degree 1 in the exterior variables, we have
\[
\swv{i}\swv{j}=-\swv{j}\swv{i}
\]
for every $1\leq i,j\leq n$.  Given the triangularity of $\{\swv{i}\}$ with respect to $\{\wv{i}\}$, the resulting map of rings
\begin{equation}\label{eq:wedgeInclusion}
\Wedge{\swv{1},\dots, \swv{n}} \rightarrow \extL{n}
\end{equation}
is clearly injective.  Extending linearly in $\symL{n}$ gives an injective map of $\symL{n}$-algebras
\[
\Psi:\symL{n}\ten\Wedge{\swv{1},\dots, \swv{n}}\rightarrow \extL{n}.
\]
By a dimension argument, $\Psi$ is surjective, and we obtain $(iii)$.

Finally, extending (\ref{eq:wedgeInclusion}) by $\Pol{n}$--linearity gives a $\Pol{n}$--algebra homomorphism
\begin{equation}\label{eq:iso-1}
\Pol{n}\ten\Wedge{\swv{1},\dots, \swv{n}}\to\Pol{n}\ten\Wedge{\wv{1},\dots, \wv{n}}=\extP{n},
\end{equation}
which we claim is an isomorphism.  Namely, the homomorphism is induced by
the nilpotent matrix $A$ with
coefficients in $\Pol{n}$ such that
\[
\underline{\wv{}}^{\scs{\mathsf{s}}}=(I+A)\underline{\wv{}}
\iff
\underline{\wv{}}=\sum_{i=0}^{n-1}(-1)^iA^i\underline{\wv{}}^{\scs{\mathsf{s}}}
\]
This determines the inverse to \eqref{eq:iso-1}. Applying the classical identification
$\Pol{n}\simeq \H_n\ten\symL{n}$, we get a ring isomorphism
\[
\extP{n}\simeq \H_n\ten\symL{n}\ten\Wedge{\swv{1},\dots,\swv{n}}\simeq \H_n\ten\extL{n}.
\]
In particular, $\extP{n}$ is a free module of rank $n!$ over $\extL{n}$ and there is a canonical
isomorphism
\[
\End_{\extL{n}}(\extP{n})\simeq\mathsf{Mat}(n!, \extL{n})
\]
which completes the proof of Theorem \eqref{thm:ext-sym}.


\subsection{Structure of the extended nilHecke algebra}\label{ss:structure}
The above analysis of $\extL{n}$ also has consequences for $\extNH{n}$.  Recall that $c[j]=c[j,n]$ denotes the permutation $s_{j}\cdots s_{n-1}$.

\begin{proposition}\label{prop:structureOfNH}
Let $p_j\in \Pol{n}^{S_{n-1}\times S_1}$ be polynomials of degree $n-j$ such that $\partial_{c[j]}(p_j)=1$, and set $\swv{j}:=\sum_i \partial_{c[i]}(p_j)$ as in Theorem \ref{thm:ext-sym}.  Then there is an isomorphism of algebras
\[
\extNH{n}\cong \NH{n}\otimes_\Q \Wedge{\swv{1},\ldots,\swv{n}}.
\]
The induced action on $\extP{n}\cong \Pol{n}\otimes_\Q\Wedge{\swv{1},\ldots,\swv{n}}$ is the standard action of $\NH{n}$ on $\Pol{n}$, tensored with the exterior algebra.  Consequently,
\begin{equation}\label{eq:extendedMatrixAlg}
\extNH{n}\cong \End_{\extL{n}}(\extP{n})\cong {\operatorname{Mat}}((n)^!_{q^2}, \extL{n})
\end{equation}
where $\extL{n}$ acts on $\extP{n}$ by right multiplication.
\end{proposition}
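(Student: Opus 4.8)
Throughout write $E:=\Wedge{\swv{1},\dots,\swv{n}}$ and recall from Theorem~\ref{thm:ext-sym} and its proof that
\[
\swv{j}\;=\;\wv{j}+\sum_{k>j}\partial_{c[k]}(p_j)\,\wv{k}\;\in\;\extL{n}=(\extP{n})^{\Sgp_n}=\bigcap_{i}\ker\partial_i\;\subset\;\extP{n}.
\]
The plan is to construct an isomorphism of graded algebras
\[
\Theta\maps\NH{n}\ten_\Q E\;\xrightarrow{\ \sim\ }\;\extNH{n}
\]
which restricts to the inclusion on $\NH{n}$ and sends the $j$-th exterior generator to $\swv{j}\in\extNH{n}$, and then to transport the matrix description of $\End_{\extL{n}}(\extP{n})$ through $\Theta$. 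The first ingredient is the faithfulness of the action of $\extNH{n}$ on $\extP{n}$: using the basis $\{\und{x}^I\wv{1}^{\alpha_1}\cdots\wv{n}^{\alpha_n}\partial_w\}$ of $\extNH{n}$ and the splitting $\extP{n}=\bigoplus_{\alpha\in\IZ_2^n}\Pol{n}\cdot\wv{\alpha}$, one evaluates a hypothetical null operator on a monomial $\und{x}^J\in\Pol{n}$; since the $\wv{k}$ commute with $\Pol{n}$, its $\wv{\alpha}$-component is $\bigl(\sum_{I,w}c_{I,\alpha,w}\,\und{x}^I\partial_w(\und{x}^J)\bigr)\wv{\alpha}$, so for each $\alpha$ the element $\sum_{I,w}c_{I,\alpha,w}\,\und{x}^I\partial_w\in\NH{n}$ kills $\Pol{n}$ and therefore vanishes, the $\NH{n}$-action on $\Pol{n}$ being faithful (this is part of the statement that $\gamma$ is an isomorphism).

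The key step is the relation $\partial_i\swv{j}=\swv{j}\partial_i$ in $\extNH{n}$ for all $i,j$. Viewing $\swv{j}$ as an element of $\extP{n}$, we have $s_i(\swv{j})=\swv{j}$ (it is $\Sgp_n$-invariant) and $\partial_i(\swv{j})=0$ (it lies in $\ker\partial_i$); hence, by the untwisted Leibniz rule $\partial_i(fg)=\partial_i(f)\,g+s_i(f)\,\partial_i(g)$ on $\extP{n}$, one gets $\partial_i(\swv{j}\cdot v)=\swv{j}\,\partial_i(v)$ for all $v\in\extP{n}$, i.e.\ $\partial_i\circ L_{\swv{j}}=L_{\swv{j}}\circ\partial_i$ as operators on $\extP{n}$, where $L_{\swv{j}}$ denotes left multiplication by $\swv{j}$. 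Since the element $\swv{j}\in\extNH{n}$ acts on $\extP{n}$ precisely as $L_{\swv{j}}$, faithfulness gives $\partial_i\swv{j}=\swv{j}\partial_i$. The remaining relations needed for $\Theta$ to be a well-defined map of graded algebras are routine: $\xv{i}\swv{j}=\swv{j}\xv{i}$, together with the fact that the $\swv{j}$ pairwise anticommute and square to zero, all follow because each $\swv{j}$ is a $\Pol{n}$-linear combination of the $\wv{k}$, which inside $\extNH{n}$ are odd, pairwise anticommuting, square-zero, and commute with $\Pol{n}$; and $\swv{j}$ is homogeneous of degree $-2j-2=\deg\wv{j}$ (using $\deg p_j=2(n-j)$ and $\deg\partial_{c[k]}=-2(n-k)$). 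I expect $\partial_i\swv{j}=\swv{j}\partial_i$ to be the one genuine obstacle: it is the single point at which the algebra $\extNH{n}$, its faithful module $\extP{n}$, and the subring $\extL{n}\subset\extP{n}$ must be juggled together, keeping track of the twisted $\Sgp_n$-action on the $\wv{k}$ alongside the genuine $\Sgp_n$-invariance of $\swv{j}$. (Alternatively one can verify this relation straight from the defining relations of $\extNH{n}$, where it collapses to the identity $s_i\bigl(\partial_{c[i]}(p_j)\bigr)=\partial_{c[i]}(p_j)$, valid because $\partial_i\partial_{c[i]}(p_j)=0$.)

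Granting the above, $\Theta$ is a well-defined homomorphism of graded algebras, and I would show it is an isomorphism as follows. It is surjective: the triangularity $\swv{j}=\wv{j}+\sum_{k>j}\partial_{c[k]}(p_j)\wv{k}$ lets one solve for $\wv{n},\wv{n-1},\dots,\wv{1}$ in turn in terms of $\swv{1},\dots,\swv{n}$ and polynomials, so $\im\Theta$ contains all generators of $\extNH{n}$. Moreover $\Theta$ is left $\Pol{n}$-linear (since $\Theta|_{\NH{n}}$ is the inclusion), and both $\NH{n}\ten_\Q E$ and $\extNH{n}$ are free left $\Pol{n}$-modules of rank $2^n\,n!$ --- with bases $\{\partial_w\ten\nu\}$ ($\nu$ ranging over a $\Q$-basis of $E$) and $\{\wv{\alpha}\partial_w\}$ respectively. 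As a surjective $\Pol{n}$-linear map between free $\Pol{n}$-modules of the same finite rank is an isomorphism ($\Pol{n}$ being commutative), $\Theta$ is an isomorphism, proving $\extNH{n}\cong\NH{n}\ten_\Q E$. The description of the induced action is then immediate: under $\extP{n}\simeq\Pol{n}\ten_\Q E$ (Theorem~\ref{thm:ext-sym}(iv)) the copy of $\NH{n}$ acts through divided difference operators, which annihilate the exterior variables because $E\subset\extL{n}$ and $\extL{n}$ is $\partial_i$-stable, while each $\swv{j}$ acts by left multiplication on the $E$-factor --- that is, as the standard $\NH{n}$-action on $\Pol{n}$ tensored with the left-regular $E$-module.

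Finally, for~\eqref{eq:extendedMatrixAlg}, combine the foregoing with the ring isomorphisms $\extP{n}\simeq\Pol{n}\ten_\Q E$ and $\extL{n}\simeq\symL{n}\ten_\Q E$ from Theorem~\ref{thm:ext-sym}(iv),(iii), under which the right $\extL{n}$-action on $\extP{n}$ becomes the right $\symL{n}$-action on $\Pol{n}$ tensored with the right-regular $E$-action. Since $\Pol{n}$ is free of graded rank $(n)^!_{q^2}$ over $\symL{n}$ and $E$ is free of rank one over itself, this yields
\[
\End_{\extL{n}}(\extP{n})\;\simeq\;\End_{\symL{n}}(\Pol{n})\ten_\Q\End_{E}(E)\;\simeq\;\mathsf{Mat}\bigl((n)^!_{q^2},\symL{n}\bigr)\ten_\Q E\;\simeq\;\mathsf{Mat}\bigl((n)^!_{q^2},\extL{n}\bigr),
\]
and the action map $\extNH{n}\to\End_{\extL{n}}(\extP{n})$ identifies with the tensor product of the isomorphism $\gamma$ with the left-regular isomorphism $E\xrightarrow{\ \sim\ }\End_{E}(E)$, hence is itself an isomorphism of graded algebras. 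This is exactly~\eqref{eq:extendedMatrixAlg}.
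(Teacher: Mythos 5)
Your proposal is correct and follows essentially the same route as the paper: both build the algebra map $\NH{n}\otimes_\Q \Wedge{\swv{1},\ldots,\swv{n}}\to\extNH{n}$ from the inclusions of $\NH{n}$ and $\extP{n}$, use the unitriangularity of the $\swv{j}$ with respect to the $\wv{j}$ to conclude it is an isomorphism, and then deduce \eqref{eq:extendedMatrixAlg} from Theorem~\ref{thm:ext-sym} together with $\NH{n}\cong\End_{\symL{n}}(\Pol{n})$. The only difference is one of detail: you make explicit (via faithfulness of the action on $\extP{n}$, or the direct check) the commutation $\partial_i\swv{j}=\swv{j}\partial_i$ that the paper leaves implicit in ``tensoring the inclusion maps,'' and you replace the paper's change-of-basis argument by a surjectivity-plus-equal-rank argument, both of which are sound.
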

Note that $\extL{n}$ is graded commutative, hence in order for left multiplication by $\wv{i}\in \NH{n}$ on $\extP{n}$ to honestly commute with the action of $\swv{j}\extL{n}$ (as opposed to commutativity up to sign), it is necessary to choose the right action of $\extL{n}$ on $\extP{n}$ in (\ref{eq:extendedMatrixAlg}).
\begin{proof}
By definition $\extNH{n}$ contains $\NH{n}$ and $\extP{n}$ as subalgebras.  Tensoring the inclusion maps gives us an algebra map
\[
\NH{n}\otimes_{\Pol{n}} \extP{n} \rightarrow \extNH{n}.
\]
By Theorem \ref{thm:ext-sym}, we know that $\extP{n}\cong \Pol{n}\otimes_\Q \Wedge{\swv{1},\ldots,\swv{n}}$, hence the above reduces to an algebra map
\[
\NH{n}\otimes_{\Q} \Wedge{\swv{1},\ldots,\swv{n}}\rightarrow \extNH{n}.
\]
As a $\NH{n}$-module, the right hand side is isomorphic to $\NH{n}\otimes_\Q \Wedge{\wv{1},\ldots,\wv{n}}$.  From the definitions, it is clear that the $\swv{i}$ are unitriangular with respect to the $\wv{i}$, hence the above algebra map is an isomorphism.  This proves the first statement. The statement regarding the action on $\extP{n}$ is easily verified.  Finally \eqref{eq:extendedMatrixAlg} follows by combining the standard fact that $\NH{n}\cong \End_{\symL{n}}(\Pol{n})$ together with Theorem \ref{thm:ext-sym}, which states that $\extP{n}$ is free of rank $[n]!$ over $\extL{n}$.
\end{proof}

As an immediate corollary we have the following analogue of the usual fact that $\symL{n}=Z(\NH{n})$.

\begin{corollary}\label{cor:ZNH}
$\extL{n}$ is isomorphic to the graded center of $\extNH{n}$ as graded algebras.\qed
\end{corollary}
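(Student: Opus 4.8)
The plan is to deduce this directly from Proposition~\ref{prop:structureOfNH}, which identifies $\extNH{n}$ with the matrix algebra $\operatorname{Mat}((n)^!_{q^2},\extL{n})$ over the graded commutative ring $\extL{n}$. The standard fact for ordinary matrix algebras is that the center of $\operatorname{Mat}(N,R)$ is $Z(R)$, embedded as scalar matrices; one wants the graded-super analog of this. First I would recall that under the isomorphism of Proposition~\ref{prop:structureOfNH}, the element $x\in\extL{n}$ corresponds to the scalar matrix $x\cdot I$, i.e.\ to right multiplication by $x$ on $\extP{n}$; since $\extL{n}$ is graded commutative, this right multiplication commutes (on the nose, not merely up to sign) with the left $\NH{n}$-action tensored with the exterior algebra, which is the content of the remark following the proposition. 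Hence $\extL{n}$ maps into $Z(\extNH{n})$ as graded algebras, and this map is injective because $\extL{n}\hookrightarrow\extNH{n}$ is.

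Next I would prove surjectivity onto the graded center. Working inside $\operatorname{Mat}((n)^!_{q^2},\extL{n})$, let $z=(z_{\ell\ell'})$ be a homogeneous central element. Commuting $z$ with the elementary matrices $E_{\ell\ell}$ (which are even, being images of the primitive idempotents $\sigma_{\und{\ell}}\lambda_{\und{\ell}}$ under the matrix identification) forces $z$ to be diagonal. Then commuting $z$ with a permutation-type matrix $E_{\ell\ell'}+E_{\ell'\ell}$ realized inside $\extNH{n}$ — these are again even, coming from the nilHecke part $\NH{n}$, on which all differentials and the superstructure are trivial — forces $z_{\ell\ell}=z_{\ell'\ell'}$ for all $\ell,\ell'$. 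Therefore $z$ is a scalar matrix $x\cdot I$ with $x\in\extL{n}$, and one checks the grading matches. Since the off-diagonal generators we conjugate by live in the purely even subalgebra $\NH{n}$, the usual sign subtleties of the super setting do not intervene here.

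The only real subtlety — and what I expect to be the main point requiring care — is the \emph{graded} (as opposed to ungraded) center: a priori the graded center could be larger than the ordinary center because it only requires commutation up to the Koszul sign. However, because $\extL{n}$ itself is graded commutative and all the matrix units needed in the argument above can be chosen to lie in the even part $\NH{n}\subset\extNH{n}$, testing centrality against this even subalgebra already pins $z$ down to a scalar, and a scalar in $\extL{n}$ automatically satisfies the full graded-centrality condition against everything. So the graded center and the image of $\extL{n}$ coincide, completing the identification. The verification that the two gradings on $\extL{n}$ — the intrinsic one and the one induced from $Z(\extNH{n})$ — agree is immediate from the explicit form of the isomorphism in Proposition~\ref{prop:structureOfNH}.
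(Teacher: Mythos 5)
Your argument is correct and takes essentially the same route the paper intends: Corollary~\ref{cor:ZNH} is stated as an immediate consequence of Proposition~\ref{prop:structureOfNH}, the point being exactly that the graded center of $\operatorname{Mat}((n)^!_{q^2},\extL{n})$ is the set of scalar matrices, since the matrix units can be realized in the parity-even subalgebra $\NH{n}$ and $\extL{n}$ is graded commutative. One small wording slip: under the isomorphism, an element $x\in\extL{n}\subset\extNH{n}$ corresponds to the scalar matrix via \emph{left} multiplication by $x$ on $\extP{n}$ (which is right-$\extL{n}$-linear precisely because of the sign convention discussed after the proposition), not via right multiplication; this does not affect your argument.
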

Here, the graded center of a graded algebra $A=\bigoplus_i A_i$ is the subset $gZ(A)\subset A$ consisting of homogeneous elements $z\in A$ such that $za = (-1)^{\deg(a)\deg(z)}az$ for every homogeneous $a\in A$.


\subsection{Bases of $\extL{n}$}\label{ss:basis}

We now discuss some explicit examples of bases of $\extL{n}$.
We adopt the following criteria. From Theorem \ref{thm:ext-sym},
a basis of $\extL{n}$ is determined by any family of 
elements $\{p_j\}_{1\leq j\leq n}\subset\Pol{n}$ satisfying
\begin{equation}\label{eq:basis-sys}
\dd{c[j]}(p_j)=1\aand p_j\in\Pol{n}^{\Sgp_{n-1}\times\Sgp_1}
\end{equation}
This allows to construct a ring isomorphism
\[
\extL{n}\simeq\symL{n}\ten\Wedge{\swv{1},\dots, \swv{n}}
\]
where
\[
\swv{j}=\sum_{k\geq j}\dd{c[k]}(p_j)\wv{k}.
\]
Any such collection $\{\swv{j}\}_{1\leq j\leq n}$ will be referred to as an
exterior basis of $\extL{n}$.

\subsubsection{Schubert polynomials}\label{sss:schubert}

The first example we discuss involves the use of Schubert polynomials.
Recall that the Schubert polynomials $\Sp{w}\in\Pol{n}$, with $w\in\Sgp_n$,
are a collection of polynomials indexed by elements of $\Sgp_n$ and
characterized by the following conditions:
\begin{enumerate}[(i)]
\item $\Sp{\id}=1$;
\item for every $u\in\Sgp_n$
\[
\dd{u}\Sp{w}=
\left\{
\begin{array}{cl}
\Sp{wu^{-1}} & \mbox{if}\;\;l(wu^{-1})=l(w)-l(u)\\
0 & \mbox{otherwise.}
\end{array}
\right.
\]
\end{enumerate}
More explicitly, one can check that
\[
\Sp{w}=\dd{w^{-1}w_0}\lp x_{1}^{n-1}x_{2}^{n-2}\cdots x_{n-1}\rp.
\]

\subsubsection{Schubert polynomials and $\extL{n}$}\label{sss:schubert-2}
The above characterization implies immediately the following
\begin{proposition}\label{prop:sch-basis}
The elements $p_j=\Sp{c[j]}$, ${1\leq j\leq n}$, are a solution of \eqref{eq:basis-sys}.
In particular, the elements
\[
\schwv{j}=\wv{j}+\sum_{k>j}\Sp{c[j,k]}\wv{k}
\]
define an exterior basis of $\extL{n}$.
\end{proposition}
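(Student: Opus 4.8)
The plan is to check that the choice $p_j=\Sp{c[j]}$ meets the two requirements in \eqref{eq:basis-sys}, and then to read off the exterior basis it produces from the recipe $\swv{j}=\sum_{k\geq j}\dd{c[k]}(p_j)\wv{k}$ recorded at the start of \ref{ss:basis} (which is Theorem~\ref{thm:ext-sym}(ii) specialized to the strata of exterior degree one). Everything should reduce to the inductive characterization of Schubert polynomials recalled in \ref{sss:schubert}: $\Sp{\id}=1$, and $\dd{u}\Sp{w}=\Sp{wu^{-1}}$ when $\ell(wu^{-1})=\ell(w)-\ell(u)$, while $\dd{u}\Sp{w}=0$ otherwise. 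The only non-formal ingredient is a little bookkeeping about the permutations $c[j]=c[j,n]=s_js_{j+1}\cdots s_{n-1}$.

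First I would record that $c[j]$ is the cycle $j\mapsto j+1\mapsto\cdots\mapsto n\mapsto j$, with one-line notation $1\,2\,\cdots\,(j-1)\,(j+1)\,\cdots\,n\,j$; equivalently, it is the Grassmannian permutation $\sigma_\a$ of \ref{sss:fund-perm} attached to the string $\a\in(\IZ_2^n)_1$ whose single $1$ sits in position $j$. Its unique right descent is in position $n-1$, so $\ell(c[j]s_i)=\ell(c[j])+1$ for all $i\in\{1,\dots,n-2\}$. By the characterization this forces $\dd{i}\Sp{c[j]}=0$ for $i=1,\dots,n-2$, i.e. $\Sp{c[j]}$ is symmetric in $x_1,\dots,x_{n-1}$, which is precisely the membership $\Sp{c[j]}\in\Pol{n}^{\Sgp_{n-1}\times\Sgp_1}$ demanded by \eqref{eq:basis-sys}.

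Next I would compute $\dd{c[k]}\Sp{c[j]}$ for all $k\geq j$. Writing $c[k]^{-1}=s_{n-1}s_{n-2}\cdots s_k$ and cancelling with $s_m^2=\id$, the product $c[j]c[k]^{-1}=(s_j\cdots s_{n-1})(s_{n-1}\cdots s_k)$ telescopes to $s_j s_{j+1}\cdots s_{k-1}=c[j,k]$, which is reduced of length $k-j=\ell(c[j])-\ell(c[k])$. Hence the characterization yields $\dd{c[k]}\Sp{c[j]}=\Sp{c[j,k]}$ for every $k\geq j$; the case $k=j$ reads $\dd{c[j]}\Sp{c[j]}=\Sp{\id}=1$, completing the verification of \eqref{eq:basis-sys}. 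Substituting $p_j=\Sp{c[j]}$ into the recipe and using $c[j,j]=\id$ then gives $\swv{j}=\sum_{k\geq j}\Sp{c[j,k]}\wv{k}=\wv{j}+\sum_{k>j}\Sp{c[j,k]}\wv{k}=\schwv{j}$, so by the discussion in \ref{ss:basis} the collection $\{\schwv{j}\}_{1\leq j\leq n}$ is an exterior basis of $\extL{n}$.

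The argument carries essentially no analytic content: it is purely formal once one invokes Theorem~\ref{thm:ext-sym} and the defining property of Schubert polynomials. The one place that needs care — the ``main obstacle,'' such as it is — is the combinatorics of the $c[j]$: identifying the descent set of $c[j]$ (so that $\dd{i}\Sp{c[j]}=0$ for $i\leq n-2$) and establishing the reduced factorization $c[j]c[k]^{-1}=c[j,k]$ together with the length identity $\ell(c[j]c[k]^{-1})=\ell(c[j])-\ell(c[k])$, which is exactly what makes the characterization output $1$ and $\Sp{c[j,k]}$ rather than $0$.
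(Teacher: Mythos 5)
Your proof is correct and follows the same route as the paper: verify that $p_j=\Sp{c[j]}$ satisfies \eqref{eq:basis-sys} via the defining property of Schubert polynomials and then invoke Theorem~\ref{thm:ext-sym}; the paper simply declares this verification ``clear from the definitions,'' whereas you spell out the descent/length bookkeeping (unique descent of $c[j]$ at $n-1$, the telescoping $c[j]c[k]^{-1}=c[j,k]$ with $\ell(c[j,k])=\ell(c[j])-\ell(c[k])$), all of which checks out.
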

\begin{proof}
It is clear from the definitions that $\Sp{c[j]}$ satisfy \eqref{eq:basis-sys}.  The proposition follows by an application of Theorem \ref{thm:ext-sym}.
\end{proof}

It is interesting to observe that the Schubert polynomials allow to define a solution
to the full system \eqref{eq:palphaNormalization}. Specifically, for every $\alpha\in(\IZ_2^n)_k$,
we can set $p_{\alpha}=\Sp{\sigma_{\alpha}}$. Then, it is easy to see that
$\dd{\sigma_{\beta}}\Sp{\sigma_{\alpha}}=\Sp{\sigma_{\alpha}\sigma_{\beta}^{-1}}$,
\[
\dd{\sigma_{\alpha}}\Sp{\sigma_{\alpha}}=1\aand \dd{i}\Sp{\sigma_{\alpha}}=0
\]
for every $i\neq n-k$.  Therefore, we get extended symmetric polynomials
\[
\schwv{\alpha}=\wv{\alpha}+\sum_{\beta\succ\alpha}\Sp{\sigma_{\alpha}\sigma_{\beta}^{-1}}\wv{\beta}
\in\extL{n}.
\]
In fact, these are exactly the elements of the standard basis of $\Wedge{\schwv{1},\dots,\schwv{n}}$.

\begin{proposition}\label{prop:sch-basis-2}
The standard basis of $\Wedge{\schwv{1},\dots, \schwv{n}}$
has the following description. For any $\alpha\in(\IZ_2^n)_k$,
\[
\schwv{1}^{\alpha_1}\cdots\schwv{n}^{\alpha_n}=
\wv{\alpha}+\sum_{\beta\succ\alpha}\Sp{\sigma_{\alpha}\sigma_{\beta}^{-1}}\wv{\beta}
\]
\end{proposition}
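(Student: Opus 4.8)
The plan is to show that the standard monomial $m_\a:=\schwv{1}^{\a_1}\cdots\schwv{n}^{\a_n}$ coincides with the element $\schwv{\a}:=\wv{\a}+\sum_{\beta\succ\a}\Sp{\sigma_\a\sigma_\beta^{-1}}\wv{\beta}$ exhibited in the discussion just above the statement, by reducing the claim to a single polynomial identity and then proving that identity by a Jacobi--Trudi type computation.

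First I would set up the reduction. Write $\sfu{1}<\cdots<\sfu{k}$ for the positions of the $1$'s of $\a$, so that $m_\a=\schwv{\sfu{1}}\cdots\schwv{\sfu{k}}$ with $k=|\a|$. Since $\extL{n}=(\extP{n})^{\Sgp_n}$ is a ring and every $\schwv{j}$ lies in $\extL{n}$, the product $m_\a$ lies in $\extL{n}$ and is homogeneous of exterior degree $k$. By Corollary~\ref{cor:membership} --- equivalently, by the injectivity of the map $\Phi_k\colon\Pol{n}^{\Sgp_{n-k}\times\Sgp_k}\to\extL{n}$, $\Phi_k(F)=\sum_{\gamma\in(\IZ_2^n)_k}\dd{\sigma_\gamma}(F)\wv{\gamma}$, constructed in the proof of Theorem~\ref{thm:ext-sym} --- we have $m_\a=\Phi_k(F)$, where $F:=[\wv{\tau^{(k)}}]\,m_\a$ is the coefficient of $\wv{\tau^{(k)}}$ in $m_\a$. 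The discussion preceding the statement identifies $\schwv{\a}$ with $\Phi_k(\Sp{\sigma_\a})$. Hence it suffices to prove the single identity $F=\Sp{\sigma_\a}$; the asserted expansion of $m_\a$ then drops out by applying the $\dd{\sigma_\gamma}$ and invoking $\dd{\sigma_\a}\Sp{\sigma_\a}=1$, $\dd{i}\Sp{\sigma_\a}=0$ for $i\neq n-k$, and $\dd{\sigma_\beta}\Sp{\sigma_\a}=\Sp{\sigma_\a\sigma_\beta^{-1}}$ (which vanishes unless $\beta\succeq\a$), all recorded just above.

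Next I would compute $F$ explicitly. Expanding $m_\a=\prod_{m=1}^{k}\bigl(\sum_{\ell\geq\sfu{m}}\Sp{c[\sfu{m},\ell]}\wv{\ell}\bigr)$ and using that the $\wv{i}$ anticommute (so only tuples $(j_1,\dots,j_k)$ of pairwise distinct indices survive), extracting the coefficient of $\wv{\tau^{(k)}}=\wv{n-k+1}\cdots\wv{n}$ yields
\[
F=\det\bigl(\Sp{c[\sfu{m},\,n-k+\ell]}\bigr)_{1\leq m,\ell\leq k},
\]
where $\Sp{c[a,b]}:=0$ for $a>b$. Here the zero entries are exactly those corresponding to tuples forbidden by the constraints $j_m\geq\sfu{m}$ (since $\sfu{m}\leq n-k+m$ they lie strictly below the diagonal), and the determinant sign is precisely the sign produced by reordering the $\wv{i}$'s. (For instance, for $n=3$ and $\a=(1,0,1)$ one has $m_\a=\schwv{1}\schwv{3}=\wv{1}\wv{3}+x_1\wv{2}\wv{3}$, so $F=\det\left(\begin{smallmatrix}x_1 & x_1x_2\\0&1\end{smallmatrix}\right)=x_1=\Sp{s_1}=\Sp{\sigma_\a}$.)

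Finally I would identify this determinant with $\Sp{\sigma_\a}$. Each $c[a,b]=s_a\cdots s_{b-1}$ is Grassmannian with unique descent $b-1$ and partition $(1^{b-a})$, so $\Sp{c[a,b]}=e_{b-a}(x_1,\dots,x_{b-1})$; and $\sigma_\a$ is Grassmannian with descent $n-k$ and associated partition $\lambda_\a$ whose conjugate has parts $\lambda'_m=n-k-\sfu{m}+m$ (as computed in the discussion of Lehmer codes above), so that $n-k+\ell-\sfu{m}=\lambda'_m-m+\ell$. Thus the displayed determinant is exactly a (flagged) dual Jacobi--Trudi/Giambelli expansion of the Grassmannian Schubert polynomial $\Sp{\sigma_\a}=s_{\lambda_\a}(x_1,\dots,x_{n-k})$. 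I expect this identification --- getting the flags, the index shift, and the signs exactly right --- to be the main obstacle. A self-contained alternative is induction on $k$: deleting the rightmost $1$ of $\a$ produces $\a''\in(\IZ_2^n)_{k-1}$ with $m_\a=m_{\a''}\schwv{\sfu{k}}$ and $m_{\a''}=\Phi_{k-1}(\Sp{\sigma_{\a''}})$ by the inductive hypothesis, which gives $F=\sum_{\ell=n-k+1}^{n}(-1)^{n-\ell}\dd{\sigma_{\gamma_\ell}}(\Sp{\sigma_{\a''}})\,\Sp{c[\sfu{k},\ell]}$ --- precisely the Laplace expansion of the above determinant along its last row --- which one then evaluates using $\dd{u}\Sp{w}\in\{\Sp{wu^{-1}},0\}$ and the combinatorics of the order $\prec$. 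Either way, a purely soft argument from the fact that both $\{m_\a\}$ and $\{\schwv{\a}\}$ are unitriangular with respect to $\prec$ will not do, since a priori they could differ by an $\symL{n}$-linear combination of the strictly larger $\schwv{\beta}$, $\beta\succ\a$, and this is excluded neither by the support condition nor by homogeneity.
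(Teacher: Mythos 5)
Your overall route is the same as the paper's: both arguments reduce the claim to identifying the coefficient of $\wv{\tau^{(k)}}$ in $\schwv{1}^{\alpha_1}\cdots\schwv{n}^{\alpha_n}$ with $\Sp{\sigma_\alpha}$ (the paper does this via the $\Sgp_n$--invariance of the product, which forces every lower coefficient to be $\dd{\sigma_\beta}$ applied to the top one --- exactly your $\Phi_k$-argument from Corollary \ref{cor:membership}), and both compute that top coefficient as the determinant $\det\bigl(\Sp{c[\sfu{m},n-k+\ell]}\bigr)_{m,\ell}=\det\bigl(\ef_{n-k+\ell-\sfu{m}}(x_1,\dots,x_{n-k+\ell-1})\bigr)_{m,\ell}$, i.e.\ the minor $\sfD{\tau^{(k)}\alpha}$ of the matrix $\Smx$. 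Your reduction, the determinant formula (including the sign bookkeeping), and your closing remark that unitriangularity alone cannot settle the statement are all correct and match the paper.

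However, the proposal stops precisely where the real work lies: you do not prove that this flagged determinant equals $\schu{\lambda_\alpha}(x_1,\dots,x_{n-k})=\Sp{\sigma_\alpha}$; you only predict that a ``flagged dual Jacobi--Trudi/Giambelli'' identity should deliver it, and your fallback induction leaves its final evaluation (``using $\dd{u}\Sp{w}\in\{\Sp{wu^{-1}},0\}$ and the combinatorics of $\prec$'') unexecuted. That identification is exactly the content of Lemma \ref{lem:sch-sch}, which the paper proves by induction on $k$: expand the flagged determinant along the last row, strip the extra variables one at a time using $\ef_m(x_1,\dots,x_N)=\ef_m(x_1,\dots,x_{N-1})+x_N\,\ef_{m-1}(x_1,\dots,x_{N-1})$ (the cross terms vanish because they are determinants with two equal rows), and then apply the second Jacobi--Trudi identity \eqref{eq:JT} together with the index computation $\lambda'_j+i-j=(n-k+i)-\sfu{j}$. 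To make your proof complete you must either supply this argument (or an equivalent one) or explicitly invoke the flagged Jacobi--Trudi identity from the literature; as written, the key step is correctly identified but missing.
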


The proof will be carried out in \ref{sss:det-ident}, \ref{sss:pf-lem:sch-sch}
and \ref{sss:sch-basis-2}.

\subsubsection{Determinantal identities}\label{sss:det-ident}
In what follows we will make use of the following result, relating Schubert polynomials
of Grassmannian permutations to Schur functions.

\begin{proposition}[Proposition 2.6.8\cite{Man}]\label{prop:sch-sch}
If $w\in\Sgp_n$ is a Grassmannian permutation, and if $r$ is its unique descent, then
\[
\Sp{w} = \schu{\lambda(w)}(x_1,x_2, \cdots, x_r),
\]
where $\schu{\lambda(w)}$ is the Schur function in the variables $\{ x_1, \dots, x_r\}$
corresponding to the partition $\lambda(w)$.
\end{proposition}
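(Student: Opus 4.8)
The plan is to bring $w$ into \emph{dominant} position by a right multiplication and then quote the bialternant formula for Schur polynomials.  Throughout, write $w_{0,r}$ for the longest element of the Young subgroup $\Sgp_r\subset\Sgp_n$ (which reverses $\{1,\dots,r\}$ and fixes $\{r+1,\dots,n\}$), and set $\rho_r=(r-1,r-2,\dots,1,0)$.  Recall from the discussion of Grassmannian permutations above that $w$, having its unique descent at $r$, is increasing on $\{1,\dots,r\}$ and on $\{r+1,\dots,n\}$, its Lehmer code is $(c_1,\dots,c_r,0,\dots,0)$ with $c_i=w(i)-i$ weakly increasing, and $\lambda(w)=(c_r,c_{r-1},\dots,c_1)$.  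In particular $w$ is the minimal-length representative of its coset in $\Sgp_n/(\Sgp_r\times\Sgp_{n-r})$, so, $w_{0,r}$ lying in $\Sgp_r\times\Sgp_{n-r}$, length-additivity for minimal coset representatives gives
\[
\ell(ww_{0,r})=\ell(w)+\ell(w_{0,r})=\ell(w)+\binom{r}{2}.
\]
(Equivalently: reversing the first block of $w$ creates exactly $\binom r2$ new inversions and destroys none.)  Feeding the pair $(ww_{0,r},w_{0,r})$ into the divided-difference characterization of Schubert polynomials recalled in \ref{sss:schubert}, and using $w_{0,r}^{-1}=w_{0,r}$, this says exactly that
\[
\Sp{w}=\dd{w_{0,r}}\bigl(\Sp{ww_{0,r}}\bigr).
\]

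Next I would compute the Lehmer code of $ww_{0,r}$ and observe that $ww_{0,r}$ is dominant.  Since $ww_{0,r}$ agrees with $w$ except that $(w(1),\dots,w(r))$ is replaced by $(w(r),\dots,w(1))$, a direct count shows its code is $\bigl((r-1)+c_r,\ (r-2)+c_{r-1},\ \dots,\ c_1,\ 0,\dots,0\bigr)$, i.e.\ the $i$-th entry is $(r-i)+c_{r+1-i}$ for $1\le i\le r$ and $0$ otherwise.  Because the $c_j$ are weakly increasing this sequence is (strictly) decreasing on $\{1,\dots,r\}$, hence a partition; in fact it is precisely $\rho_r+\lambda(w)$.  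A permutation whose Lehmer code is a partition is dominant, and the Schubert polynomial of a dominant permutation is the monomial with that code as exponent (see \cite{Man}), so
\[
\Sp{ww_{0,r}}=\und{x}^{\,\rho_r+\lambda(w)}=x_1^{\,r-1+\lambda(w)_1}\,x_2^{\,r-2+\lambda(w)_2}\cdots x_r^{\,\lambda(w)_r}.
\]

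Combining the last two displays gives $\Sp{w}=\dd{w_{0,r}}\bigl(\und{x}^{\,\rho_r+\lambda(w)}\bigr)$.  This is nothing but the $r$-variable instance of the formula $\schu{\mu}=\dd{w_0}(\und{x}^{\,\delta+\mu})$ recalled in Section~1, applied inside $\Sgp_r$ acting on $x_1,\dots,x_r$: the $\Sgp_r$-antisymmetrization divided by the Vandermonde $\prod_{1\le i<j\le r}(x_i-x_j)$ is the classical bialternant expression for $\schu{\lambda(w)}(x_1,\dots,x_r)$.  Hence $\Sp{w}=\schu{\lambda(w)}(x_1,\dots,x_r)$, which is the assertion.

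I do not anticipate a real obstacle: the one step that requires an idea rather than bookkeeping is the passage to the dominant permutation $ww_{0,r}$; everything else is a short manipulation with inversions, Lehmer codes, and the two quoted formulas.  The only fine point is that both ``dominant $\Rightarrow$ Schubert polynomial is a monomial'' and the bialternant formula continue to hold verbatim for the parabolic $\Sgp_r\subset\Sgp_n$ (that is, for the operators $\dd{1},\dots,\dd{r-1}$ acting on $\IQ[x_1,\dots,x_n]$), which is clear since these generators only involve the variables $x_1,\dots,x_r$.
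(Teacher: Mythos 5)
Your proof is correct. Note that the paper does not actually prove this statement -- it is quoted directly from \cite{Man}, Proposition 2.6.8 -- so there is no internal argument to compare against; what you give is essentially the standard textbook derivation. All the steps check out: $w$ is the minimal coset representative in $\Sgp_n/(\Sgp_r\times\Sgp_{n-r})$, so $\ell(ww_{0,r})=\ell(w)+\binom{r}{2}$ and the characterization in \ref{sss:schubert} gives $\Sp{w}=\dd{w_{0,r}}(\Sp{ww_{0,r}})$; the Lehmer code of $ww_{0,r}$ is indeed the partition $\rho_r+\lambda(w)$, so $\Sp{ww_{0,r}}$ is the corresponding monomial by the dominant case (which is a more elementary fact from \cite{Man}, so no circularity); and since that monomial and the operators $\dd{1},\dots,\dd{r-1}$ involve only $x_1,\dots,x_r$, the final step is exactly the bialternant identity $\schu{\lambda}= \partial_{w_0}(\und{x}^{\delta+\lambda})$ in $r$ variables, recalled in Section 1.
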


\begin{example}\label{example:lehmer} \hfill
\begin{enumerate}
  \item If $w = c{[j]} = s_j s_{j+1} \dots s_{n-1} \in \Sgp_n$, then $w$ has a unique descent at position $n-1$.  The Lehmer code is $(0,\dots,0,1, \dots, 1,0)$ and the corresponding partition $\lambda(w)=(1^{n-j})$.  Hence,
$\Sp{c[j]} = \ef_{n-j}(\xv{1},\dots,\xv{n-1})$.

\item More generally, if $j<k$ and $w = c{[j,k]} =s_js_{j+1}\dots s_{k-1} \in \Sgp_n$, then $w$ is a Grassmannian permutation with a unique descent in position $k-1$.  The corresponding partition $\lambda(w)=(1^{k-j})$ and $\Sp{c[j,k]}=\ef_{k-j}(\xv{1},\dots,\xv{k-1})$.

\item The permutations $c^{(k)}=c{[k,n]}\cdots c{[2,n-k+2]}\cdot c{[1, n-k+1]}$ have a unique descent at position $n-k$.  The Lehmer code for $c^{(k)}$ has $\Lw_1 = \Lw_2 = \dots = \Lw_{n-k} = k$ and $\Lw_j =0$ for $j>n-k$.  It follows that $\Sp{c^{(k)}} = \schu{(k^{n-k})}(x_1,x_2, \dots, x_{n-k})$.

\item Generalizing all of the previous examples, $w=c{[k,n]}\cdot c{[k-1,n-1]}\cdots  c{[k-j+1, n-j+1]}$ has a unique descent at $n-j$, and $\Sp{w} = \schu{(j^{n-k})}(x_1,x_2, \dots, x_{n-j})$.
\end{enumerate}
\end{example}

Recall that Schur functions satisfy the second Jacobi--Trudi identity:
for every partition $\lambda$ of length $l(\lambda)$
\begin{equation}\label{eq:JT}
\schu{\lambda}=\det(\ef_{\lambda_i'+j-i})_{i,j=1}^{l(\lambda')}=\det(\ef_{\lambda_j'+i-j})_{i,j=1}^{l(\lambda')}
\end{equation}
where $\lambda'$ is conjugate to $\lambda$.
The proof of Proposition \ref{prop:sch-basis-2} relies on the following
\begin{lemma}\label{lem:sch-sch}
For any $\alpha\in(\IZ_2^n)_k$, let $\sfu{}=(\sfu{1},\dots,\sfu{k})$ and $\lambda_{\alpha}$ be,
respectively, the corresponding sequence of indices and the partition defined in \ref{sss:fund-perm}. Then
\[
\schu{\lambda_{\alpha}}=\det(\ef_{(n-k+i)-\sfu{j}})_{i,j=1}^k
\]
Moreover,
\begin{align*}
\schu{\lambda_{\alpha}}(x_1,\dots, x_{n-k})&=
\det(e_{n-k+i-\sfu{j}}(x_1,\dots, x_{n-k}))_{i,j=1}^k
=\det(e_{n-k+i-\sfu{j}}(x_1,\dots, x_{n-k+i-1}))_{i,j=1}^k
\end{align*}
\end{lemma}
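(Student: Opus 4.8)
The plan is to derive all three claimed identities from Proposition \ref{prop:sch-sch} and the second Jacobi--Trudi identity \eqref{eq:JT}, together with the combinatorial description of $\lambda_\alpha$ given in \ref{sss:fund-perm}. First I would recall from \ref{sss:fund-perm} that $\lambda_\alpha = \lambda(\sigma_\alpha)$ has conjugate partition $\lambda'_\alpha = (\lambda'_j)_{j=1}^k$ with $\lambda'_j = n-k-\sfu{j}+j$. Substituting this into the second form of the Jacobi--Trudi determinant $\schu{\lambda_\alpha} = \det(\ef_{\lambda'_j + i - j})_{i,j=1}^{k}$ — noting $l(\lambda'_\alpha) \le k$, with the convention $\ef_0 = 1$ and $\ef_m = 0$ for $m<0$ absorbing any zero parts — gives exponent $\lambda'_j + i - j = (n-k-\sfu{j}+j) + i - j = (n-k+i) - \sfu{j}$, which is exactly the asserted $\det(\ef_{(n-k+i)-\sfu{j}})_{i,j=1}^k$. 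This establishes the first identity.

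For the second identity, I would apply Proposition \ref{prop:sch-sch} directly: $\sigma_\alpha$ is a Grassmannian permutation with unique descent at $r = n-k$ (as recorded in \ref{sss:fund-perm}), so $\Sp{\sigma_\alpha} = \schu{\lambda_\alpha}(x_1,\dots,x_{n-k})$. Then the same Jacobi--Trudi substitution as above, now performed in the truncated variable set $\{x_1,\dots,x_{n-k}\}$, yields $\schu{\lambda_\alpha}(x_1,\dots,x_{n-k}) = \det(\ef_{n-k+i-\sfu{j}}(x_1,\dots,x_{n-k}))_{i,j=1}^k$.

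The last equality — replacing $\ef_{n-k+i-\sfu{j}}(x_1,\dots,x_{n-k})$ by $\ef_{n-k+i-\sfu{j}}(x_1,\dots,x_{n-k+i-1})$ row by row — is the one requiring an actual argument, and I expect it to be the main (though still modest) obstacle. The point is that in row $i$ the exponents $n-k+i-\sfu{j}$ range over values at most $n-k+i-1$ (since $\sfu{j}\ge 1$), and for an elementary symmetric polynomial one has $\ef_m(x_1,\dots,x_p) = \ef_m(x_1,\dots,x_q)$ whenever $m \le \min(p,q)$; more usefully, adding the extra variables $x_{n-k+i},\dots,x_{n-k}$ only introduces monomials divisible by one of those variables, i.e. $\ef_m(x_1,\dots,x_{n-k}) = \ef_m(x_1,\dots,x_{n-k+i-1}) + (\text{terms involving } x_{n-k+i},\dots,x_{n-k})$ is false in general, so the cleaner route is: perform column operations / expand using the recursion $\ef_m(x_1,\dots,x_p) = \ef_m(x_1,\dots,x_{p-1}) + x_p \ef_{m-1}(x_1,\dots,x_{p-1})$, and observe that the correction terms, when substituted back into the determinant, produce rows that are $x_{n-k+i}, x_{n-k}, \dots$ multiples of rows already present, hence vanish. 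Concretely I would induct downward on the number of ``extra'' variables, at each step using multilinearity of the determinant in its rows to split off the correction term and identifying it as a scalar multiple of another row. This is a standard Schur-function determinant manipulation (the same mechanism underlying the equivalence of the two Jacobi--Trudi forms), so the write-up should be short; the only care needed is bookkeeping of which variable indices appear in which row.
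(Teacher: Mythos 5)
Your plan coincides with the paper's proof: the first identity is obtained exactly as you describe by substituting $\lambda'_j=n-k-\sfu{j}+j$ into the dual Jacobi--Trudi determinant (and the first equality of the ``Moreover'' part is just its specialization to $x_1,\dots,x_{n-k}$), while the equality of the two determinants is proved in the paper by the very mechanism you propose --- the recursion $\ef_m(x_1,\dots,x_p)=\ef_m(x_1,\dots,x_{p-1})+x_p\,\ef_{m-1}(x_1,\dots,x_{p-1})$ combined with the vanishing of determinants having two equal rows --- merely organized there as an induction on $k$ with Laplace expansion along the last row instead of your direct row-multilinearity argument. Your ``bookkeeping'' caveat is the right one: the variable-stripping must be ordered (bottom row upward, one variable per sweep) so that each correction row coincides with the current state of the row above it, which is exactly what makes the repeated-row cancellation go through.
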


\begin{example}
The result of Lemma \ref{lem:sch-sch} is addressing the following phenomenon.
Set $n=2$ and consider the permutation $s_2s_1$.
In this case we get
\[
\Sp{s_2s_1}=x_1^2=\det
\left[
\begin{array}{ll}
x_1&1\\
x_1x_2 & x_1+x_2
\end{array}
\right]
=
\det
\left[
\begin{array}{ll}
\ef_1(x_1)&1\\
\ef_2(x_1,x_2) & \ef_1(x_1,x_2)
\end{array}
\right]
\]
On the other hand, $s_2s_1$ has a unique descent at $1$, its partition is $[2]$, its conjugate is
$[1,1]$, and, by second Jacobi--Trudi identity,
\[
\schu{[2]}=
\det
\left[
\begin{array}{ll}
\ef_1&1\\
\ef_2 & \ef_1
\end{array}
\right]
= \ef_1^2-\ef_2
\]
These coincide when we input the set of variables $\{x_1\}$, namely
\[
\Sp{s_2s_1}=x_1^2=\ef_1^2(x_1)-\ef_2(x_1)=\schu{[2]}(x_1).
\]
\end{example}

\subsubsection{Proof of Lemma \ref{lem:sch-sch}}\label{sss:pf-lem:sch-sch}
The first statement is immediate. Namely, the second Jacobi--Trudi identity
for $\lambda_{\alpha}$ reads
\[
\schu{\lambda_{\alpha}}=\det(\ef_{\lambda_j'+i-j})_{i,j=1}^{k}=\det(\ef_{(n-k+i)-\sfu{j}})_{i,j=1}^k
\]
since
\[
\lambda'_j+i-j=n-k-\sfu{j}+j+i-j=(n-k+i)-\sfu{j}
\]
To prove the second statement, we proceed by induction on $k$.
For $k=1$ there is nothing to prove. For $k>1$, consider the
expansion of
\[
\sfD{}=\det(\ef_{n-k+i-\sfu{j}}(x_1,\dots, x_{n-k+i-1}))
\]
along the last row, i.e.
\[
\sfD{}=\sum_{j=1}^k\ef_{n-\sfu{j}}(x_1,\dots, x_{n-1})\cdot\sfM{j}
\]
where $\sfM{j}$ is the signed minor of the matrix obtained by removing the last
row and the $j$th column. By induction, $\sfM{j}$ depends exclusively on the
variables $x_1,\dots, x_{n-k}$, and
\[
\sfM{j}=(-1)^{k+j}\det(\ef_{n-k+i-\sfu{l}}(x_1,\dots, x_{n-k}))_{\substack{i=1,\dots, k-1\\ 
\;\;l=1,\dots,\hat{j},\dots, k}}
\]
Applying the usual recursive relation for elementary symmetric functions
\[
\ef_m(x_1,\dots, x_n)=\ef_{m}(x_1,\dots, x_{n-1})+x_n\ef_{m-1}(x_1,\dots, x_{n-1})
\]
we get
\begin{align*}
\sfD{}=&\sum_{j=1}^k\ef_{n-\sfu{j}}(x_1,\dots, x_{n-1})\cdot\sfM{j}=\\
=&\sum_{j=1}^k\ef_{n-\sfu{j}}(x_1,\dots, x_{n-2})\cdot\sfM{j}+
x_{n-1}\sum_{j=1}^k\ef_{n-1-\sfu{j}}(x_1,\dots, x_{n-2})\cdot\sfM{j}
\end{align*}
Now we observe that
\[
\sum_{j=1}^k\ef_{n-1-\sfu{j}}(x_1,\dots, x_{n-2})\cdot\sfM{j}=0
\]
since it describes the determinant of a matrix with two equal rows.
By iterating this process we get
\[
\sfD{}=\sum_{j=1}^k\ef_{n-\sfu{j}}(x_1,\dots, x_{n-k})\cdot\sfM{j}=
\det(\ef_{n-k+i-\sfu{j}}(x_1,\dots, x_{n-k}))_{i,j=1}^k
\]

\subsubsection{Proof of Proposition \ref{prop:sch-basis-2}}\label{sss:sch-basis-2}
\Omit{We have
\[
\schwv{\alpha}=\sum_{\beta\succeq\alpha}\Sp{\sigma_{\alpha}\sigma_{\beta}^{-1}}\wv{\beta}
\]
and $\schwv{j}=\schwv{\delta_j}$, $\delta_j=(0\dots 0 \stackrel{(j)}{1} 0 \dots 0)$.
}
Let $\Smx\in\mathsf{Mat}(n\times n, \Pol{n})$ be the unipotent lower triangular
matrix
\[
[\Smx]_{ij}=\Sp{s_j\cdots s_{i-j}}=\Sp{c[j,i]}=\ef_{i-j}(x_1,\dots, x_{i-1})
\]
for any $i>j$. The elements $\schwv{}=(\schwv{1}, \dots, \schwv{n})$ satisfy
$\schwv{}=\Smx^{\mathsf{T}}\wv{}$, where $\wv{}=(\wv{1},\dots, \wv{n})$.
In particular, their wedge product can be written in terms of minors of $\Smx$.
More specifically, for every $\alpha\in(\IZ_2^n)_k$,
\[
\schwvt{\alpha}:=\schwv{1}^{\alpha_1}\cdots\schwv{n}^{\alpha_n}=
\sum_{\beta\succeq\alpha}\sfD{\beta\alpha}\wv{\beta}
\]
where $\sfD{\beta\alpha}$ is the minor of $\Smx$ corresponding to the rows identified by $\beta$
and the columns identified by $\alpha$.

\begin{proposition}
For every $\alpha,\beta\in(\IZ_2^n)_k$, $\beta\succeq\alpha$,
$\Sp{\sigma_{\alpha}\sigma_{\beta}^{-1}}=\sfD{\beta\alpha}$.
In particular, $\schwv{\alpha}=\schwvt{\alpha}$.
\end{proposition}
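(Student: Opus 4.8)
\emph{Proof strategy.} The idea is to pin down all of the coefficients $\sfD{\beta\a}$ from the single ``top'' one by invoking Corollary~\ref{cor:membership}, and then to identify that top coefficient with a Schubert polynomial via the determinantal Lemma~\ref{lem:sch-sch}. Recall from the discussion preceding the statement that $\schwvt{\a}=\schwv{1}^{\a_1}\cdots\schwv{n}^{\a_n}$ expands as $\sum_{\beta\succeq\a}\sfD{\beta\a}\wv{\beta}$ with $\sfD{\a\a}=1$, where $\sfD{\beta\a}$ is the minor of $\Smx$ on the rows indexed by the ones of $\beta$ and the columns indexed by the ones of $\a$. Observe that $\schwvt{\a}$ lies in $\extL{n}$: each $\schwv{j}$ does by Proposition~\ref{prop:sch-basis}, and $\extL{n}$ is a subring of $\extP{n}$. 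Moreover $\schwvt{\a}$ is homogeneous of degree $k=|\a|$ in the exterior variables.

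The first real step is to compute the coefficient of $\wv{\tau^{(k)}}$. Since the ones of $\tau^{(k)}$ occupy positions $n-k+1,\dots,n$, while those of $\a$ occupy positions $\sfu{1}<\cdots<\sfu{k}$, the shape of the unitriangular matrix $\Smx$ gives
\[
\sfD{\tau^{(k)}\a}=\det\bigl(\ef_{(n-k+i)-\sfu{j}}(x_1,\dots,x_{n-k+i-1})\bigr)_{i,j=1}^{k}.
\]
By Lemma~\ref{lem:sch-sch} the right-hand side equals $\schu{\lambda_{\a}}(x_1,\dots,x_{n-k})$, and since $\sigma_{\a}$ is a Grassmannian permutation with unique descent $n-k$ and $\lambda(\sigma_{\a})=\lambda_{\a}$ (see \ref{sss:fund-perm}), Proposition~\ref{prop:sch-sch} identifies this with the Schubert polynomial $\Sp{\sigma_{\a}}$. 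Hence $\sfD{\tau^{(k)}\a}=\Sp{\sigma_{\a}}$.

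Now apply part (ii) of Corollary~\ref{cor:membership} to the exterior-degree-$k$ element $v=\schwvt{\a}\in\extL{n}$: it says each coefficient is a divided-difference image of the top one, i.e.\ $\sfD{\beta\a}=\dd{\sigma_{\beta}}(\Sp{\sigma_{\a}})$ for every $\beta\in(\IZ_2^n)_k$. Finally, for $\beta\succeq\a$ I would check the length identity $\ell(\sigma_{\a}\sigma_{\beta}^{-1})=\ell(\sigma_{\a})-\ell(\sigma_{\beta})$ by a short count of inversion sets, using that $\beta\succeq\a$ forces the $m$-th one of $\a$ to lie weakly to the left of the $m$-th one of $\beta$ for every $m$; by the defining recursion for Schubert polynomials recalled in \ref{sss:schubert}, this gives $\dd{\sigma_{\beta}}(\Sp{\sigma_{\a}})=\Sp{\sigma_{\a}\sigma_{\beta}^{-1}}$, so $\sfD{\beta\a}=\Sp{\sigma_{\a}\sigma_{\beta}^{-1}}$. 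The ``in particular'' statement is then immediate, since $\schwv{\a}$ was defined in \ref{sss:schubert-2} as $\wv{\a}+\sum_{\beta\succ\a}\Sp{\sigma_{\a}\sigma_{\beta}^{-1}}\wv{\beta}$, which is exactly $\schwvt{\a}=\wv{\a}+\sum_{\beta\succ\a}\sfD{\beta\a}\wv{\beta}$. The main obstacle is the second step, the identification $\sfD{\tau^{(k)}\a}=\Sp{\sigma_{\a}}$; all of its genuine content is the inductive determinantal argument already carried out in Lemma~\ref{lem:sch-sch}, so within this proof only the (routine) length bookkeeping and the appeal to Corollary~\ref{cor:membership} remain.
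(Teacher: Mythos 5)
Your proposal is correct and follows essentially the same route as the paper: identify the top coefficient $\sfD{\tau^{(k)}\alpha}$ with $\Sp{\sigma_\alpha}$ via Lemma~\ref{lem:sch-sch} and Proposition~\ref{prop:sch-sch}, then use the $\Sgp_n$--invariance of $\schwvt{\alpha}$ (equivalently Corollary~\ref{cor:membership}) to get $\sfD{\beta\alpha}=\dd{\sigma_\beta}\Sp{\sigma_\alpha}=\Sp{\sigma_\alpha\sigma_\beta^{-1}}$. The only cosmetic difference is that you flag the length condition $\ell(\sigma_\alpha\sigma_\beta^{-1})=\ell(\sigma_\alpha)-\ell(\sigma_\beta)$ explicitly, which the paper treats as immediate in \ref{sss:schubert-2}.
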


\begin{proof}
Since $\sigma_{\alpha}$ is a Grassmannian permutation with descent at $n-k$, it follows
from Lemma \ref{lem:sch-sch}
\[
\Sp{\sigma_{\alpha}}=\schu{\lambda_{\alpha}}(x_1,\dots, x_{n-k})=
\det(\ef_{(n-k+i)-\sfu{j}}(x_1,\dots, x_{n-k}))_{i,j=1}^k
\]
and
\[
\Sp{\sigma_{\alpha}}=\det(\ef_{(n-k+i)-\sfu{j}}(x_1,\dots, x_{n-k}))_{i,j=1}^k=
\det(\ef_{(n-k+i)-\sfu{j}}(x_1,\dots, x_{n-k+i-1}))_{i,j=1}^k=\sfD{\tau^{(k)}\alpha}
\]
Moreover, since the elements $\schwv{j}$ are $\Sgp_n$--invariant, so is $\schwvt{\alpha}$. Hence
the coefficients $\sfD{\beta\alpha}$ satisfy
\[
\sfD{\beta\alpha}=\dd{\sigma_{\beta}}\sfD{\tau^{(k)}\alpha}
\]
and therefore
\[
\sfD{\beta\alpha}=\dd{\sigma_{\beta}}\sfD{\tau^{(k)}\alpha}=\dd{\sigma_{\beta}}\Sp{\sigma_{\alpha}}
=\Sp{\sigma_{\alpha}\sigma_{\beta}^{-1}}
\]
\end{proof}

This concludes the proof of Proposition \ref{prop:sch-basis-2}.

\begin{remark}\label{rem:reindex-sch}
It follows from the discussion above that the Schubert exterior basis of $\extL{n}$
is more concisely described in terms of elementary functions. It will be convenient
to reindex these elements. Henceforth, we will adopt the following notation
\[
\ewv{j}=\sum_{k=0}^{j-1}\ef_{k}(x_1,\dots, x_{n-j+k})\wv{n+1-j+k}=\schwv{n-j+1}
\]
\end{remark}
\subsubsection{Dual Schubert polynomials}\label{sss:dual-schubert}
Our second example of a basis for $\extL{n}$ relies on the notion
of \emph{dual} Schubert polynomials.

\begin{proposition}[\cite{Man} Proposition 2.5.7]
There is a $\symL{n}$-bilinear form on $\Pol{n}$ defined by $(x,y) := \partial_{w_0}(xy)$.  With respect to this form the dual basis to the Schubert polynomials are given by
\begin{equation}
  \dSp{w} = (-1)^{\ell(ww_0)}w_0
  (\Sp{ww_0}), \qquad w \in \Sgp_n.
\end{equation}
\end{proposition}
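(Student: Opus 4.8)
The plan is to verify directly that $\partial_{w_0}\bigl(\Sp u\cdot\dSp v\bigr)=\delta_{uv}$ in $\symL n$ for the proposed formula $\dSp w=(-1)^{\ell(ww_0)}w_0(\Sp{ww_0})$. Since $\{\Sp w\}_{w\in\Sgp_n}$ is a $\symL n$--basis of $\Pol n$ (part of the decomposition $\Pol n\simeq\H_n\ten\symL n$ recalled above), these equalities simultaneously show that the form $(x,y)=\partial_{w_0}(xy)$ is non-degenerate and that $\{\dSp w\}$ is the dual basis, which is the whole content of the proposition.

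First I would record a few elementary operator identities, all consequences of $\partial_i=\frac{\id-s_i}{x_i-x_{i+1}}$ together with the length-additive factorization $\partial_{w_0}=\partial_{w_0s_i}\partial_i$ (valid because $\ell(w_0s_i)=\ell(w_0)-1$ for every $i$): one has $s_i\partial_i=\partial_i$, so $\partial_ig$ is always $s_i$--invariant; $\partial_{w_0}\circ\partial_i=0$; and $\partial_{w_0}\circ s_i=-\partial_{w_0}$, hence $\partial_{w_0}\circ w=(-1)^{\ell(w)}\partial_{w_0}$ for every $w\in\Sgp_n$. In particular $\partial_{w_0}(x^{\nu})=0$ whenever the exponent vector $\nu$ has two equal entries, since the transposition of the corresponding two variables fixes $x^{\nu}$ and has odd length. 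Next, combining $s_i\partial_i=\partial_i$, $\partial_{w_0}\circ\partial_i=0$ and the twisted Leibniz rule $\partial_i(fg)=\partial_i(f)g+s_i(f)\partial_i(g)$ yields the \emph{self-adjointness} $(\partial_if,g)=(f,\partial_ig)$, and hence $(\partial_wf,g)=(f,\partial_{w^{-1}}g)$ for all $w$. Finally, from $w_0s_iw_0=s_{n-i}$ one checks $w_0\partial_iw_0=-\partial_{n-i}$, so $w_0\partial_w=(-1)^{\ell(w)}\partial_{w_0ww_0}\,w_0$ for every $w$ (the automorphism $s_i\mapsto s_{n-i}$ carries reduced words to reduced words).

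Applying the last identity to $\Sp{ww_0}=\partial_{(ww_0)^{-1}w_0}(x^{\delta})=\partial_{w_0w^{-1}w_0}(x^{\delta})$ and using $\ell(ww_0)=N-\ell(w)$ with $N=\ell(w_0)$, one gets the compact description
\[
\dSp w=(-1)^{N}\,\partial_{w^{-1}}\bigl(x^{\delta^{*}}\bigr),\qquad x^{\delta^{*}}:=w_0(x^{\delta})=x_1^0x_2^1\cdots x_n^{n-1}.
\]
Plugging this in, then using self-adjointness and the defining relation $\partial_v\Sp u=\Sp{uv^{-1}}$ if $\ell(uv^{-1})=\ell(u)-\ell(v)$ and $0$ otherwise, one obtains
\[
\partial_{w_0}\bigl(\Sp u\cdot\dSp v\bigr)=(-1)^{N}\,\partial_{w_0}\bigl((\partial_v\Sp u)\cdot x^{\delta^{*}}\bigr).
\]
Because $\partial_v\Sp u$ is either $0$ or a single Schubert polynomial $\Sp z$ with $z=uv^{-1}$, $\ell(z)=\ell(u)-\ell(v)\ge 0$, and $z=\id$ exactly when $u=v$, the whole statement reduces to the single identity $(-1)^{N}\partial_{w_0}\bigl(\Sp z\cdot x^{\delta^{*}}\bigr)=\delta_{z,\id}$ for all $z\in\Sgp_n$.

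To finish, note that $f\mapsto(-1)^{N}\partial_{w_0}(f\cdot x^{\delta^{*}})$ is $\symL n$--linear, so it suffices to evaluate it on the monomial basis $\{x^{\mu}:0\le\mu_i\le n-i\}$ of $\H_n$. Writing $x^{\mu}x^{\delta^{*}}=x^{\nu}$ with $\nu=(\mu_1,\mu_2+1,\dots,\mu_n+n-1)$, the vanishing recorded above applies as soon as $\nu$ has a repeated entry; the key combinatorial point is that this fails only for $\mu=0$, because distinct entries would force $\nu$ to be a permutation of $\{0,\dots,n-1\}$ with $\nu_i\ge i-1$, hence $\nu=\delta^{*}$ and $\mu=0$. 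Since $\partial_{w_0}(x^{\delta^{*}})=(-1)^{N}$ (from $\partial_{w_0}\circ w_0=(-1)^{N}\partial_{w_0}$ and $\partial_{w_0}(x^{\delta})=1$), the functional sends $1\mapsto1$ and $x^{\mu}\mapsto0$ for $\mu\ne0$, so on $\Sp z$ it returns the coefficient of $x^0=1$ in the $\H_n$--expansion of $\Sp z$; this is $1$ if $z=\id$ and $0$ otherwise, as $\Sp z$ is homogeneous of degree $\ell(z)\ge1$ when $z\ne\id$. This yields $(\Sp u,\dSp v)=\delta_{uv}$. The genuinely non-formal input is this last observation about the staircase box; I expect the main practical obstacle to be keeping the signs straight through the reductions rather than any real difficulty.
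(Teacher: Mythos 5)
Your argument is correct, and it is worth noting that the paper does not prove this statement at all: it is quoted from Manivel (Proposition 2.5.7 of \cite{Man}), and the only related computation the paper records afterwards is the derived characterization $\partial_u\dSp{w}=\dSp{wu^{-1}}$ (when lengths add) via the identity $w_0\partial_u w_0=(-1)^{\ell(u)}\partial_{w_0uw_0}$ --- which is exactly the conjugation identity you use. So you have supplied a self-contained proof along the standard lines. I checked the key steps: the operator identities $s_i\partial_i=\partial_i$, $\partial_{w_0}\partial_i=0$, $\partial_{w_0}\circ w=(-1)^{\ell(w)}\partial_{w_0}$, the resulting self-adjointness $(\partial_i f,g)=(f,\partial_i g)$, the compact form $\dSp{w}=(-1)^{N}\partial_{w^{-1}}\bigl(w_0(x^{\delta})\bigr)$ (the sign bookkeeping with $\ell(ww_0)=N-\ell(w)$ is right; it reproduces the table in the paper for $n=3$), the reduction via $\partial_v\Sp{u}\in\{0,\Sp{uv^{-1}}\}$, and the staircase argument that $\mu+\delta^{*}$ has a repeated entry unless $\mu=0$, are all valid, and $\partial_{w_0}(x^{\delta^{*}})=(-1)^{N}$ closes the computation. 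Two minor points you should make explicit if you write this up: first, $\symL{n}$-bilinearity and $\symL{n}$-valuedness of the form come from $\symL{n}$-linearity of $\partial_{w_0}$ and $\im\partial_{w_0}=\symL{n}$, both recalled in the paper; second, to upgrade $(\Sp{u},\dSp{v})=\delta_{uv}$ to the statement that $\{\dSp{w}\}$ is itself a $\symL{n}$-basis (so that ``dual basis'' is literally correct), observe that the matrix expressing the $\dSp{v}$ in the basis $\{\Sp{w}\}$ is a right inverse of the Gram matrix of $\{\Sp{w}\}$, hence a two-sided inverse over the commutative ring $\symL{n}$, so the form is perfect and the $\dSp{w}$ form a basis. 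With those remarks added, your proof is complete.
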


The dual Schubert polynomials are characterized by the following conditions:
\begin{enumerate}[$(i)$]
\item $\dSp{w_0}=1$;
\item for every $u\in\Sgp_n$
\[
\dd{u}\dSp{w}=
\left\{
\begin{array}{cl}
\dSp{wu^{-1}} & \mbox{if}\;\;l(wu^{-1})=l(w)+l(u)\\
0 & \mbox{otherwise.}
\end{array}
\right.
\]
\end{enumerate}
This follows directly from the characterization of the Schubert polynomials in \ref{sss:schubert}
and from the relation
\[
w_o\cdot\dd{u}\cdot w_0=(-1)^{l(u)}\dd{w_0uw_0}
\]
In particular, we get the following result, dualizing Proposition \ref{prop:sch-basis}.

\begin{proposition}
The elements $p_j=\dSp{w_0c[j]}$, ${1\leq j\leq n}$, are
a solution of \eqref{eq:basis-sys}. In particular, the elements
\[
\dschwv{j}=\wv{j}+\sum_{k>j}\dSp{w_0c[j,k]}\wv{k}
\]
define an exterior basis of $\extL{n}$.\qed
\end{proposition}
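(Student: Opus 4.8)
The plan is to verify that the family $\{p_j=\dSp{w_0c[j]}\}_{1\le j\le n}$ is a solution of \eqref{eq:basis-sys}, and then to conclude exactly as in the proof of Proposition \ref{prop:sch-basis} by invoking Theorem \ref{thm:ext-sym}, identifying the coefficients $\dd{c[k]}(p_j)$ along the way so as to recover the displayed formula for $\dschwv{j}$. The ingredients I would use are: the characterization $(i)$--$(ii)$ of the dual Schubert polynomials recalled just above; the identity $\ell(w_0v)=\ell(w_0)-\ell(v)$, valid for every $v\in\Sgp_n$; and two facts about the permutations $c[j]=s_js_{j+1}\cdots s_{n-1}$, namely that each is a reduced word of length $n-j$ whose unique descent is at position $n-1$ (Example \ref{example:lehmer}), and that $c[j]c[k]^{-1}=c[j,k]$ for $j<k$, obtained by cancelling the common suffix $s_ks_{k+1}\cdots s_{n-1}$.

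For the normalization, I would apply $(ii)$ with $w=w_0c[j]$ and $u=c[j]$: then $wu^{-1}=w_0$, the length hypothesis $\ell(w_0)=\ell(w_0c[j])+\ell(c[j])$ holds because $\ell(w_0c[j])=\ell(w_0)-\ell(c[j])$, and therefore $\dd{c[j]}(p_j)=\dSp{w_0}=1$. For the symmetry condition $p_j\in\Pol{n}^{\Sgp_{n-1}\times\Sgp_1}$, I would show $\dd{i}(p_j)=0$ for $1\le i\le n-2$: for such $i$ the transposition $s_i$ is not a descent of $c[j]$, so $\ell(c[j]s_i)=\ell(c[j])+1$, whence $\ell(w_0c[j]s_i)=\ell(w_0)-\ell(c[j])-1=\ell(w_0c[j])-1$; the length hypothesis of $(ii)$ fails and $\dd{i}(p_j)=0$. (The case $i=n-1$ is not needed, and in fact $\dd{n-1}(p_j)\neq0$ for $j<n$, consistent with not imposing symmetry in $x_{n-1},x_n$.) This establishes \eqref{eq:basis-sys}, so Theorem \ref{thm:ext-sym}, in the form recalled at the start of \ref{ss:basis}, yields an exterior basis $\swv{j}=\sum_{k\ge j}\dd{c[k]}(p_j)\wv{k}$ of $\extL{n}$.

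It remains to compute $\dd{c[k]}(p_j)$ for $k>j$. Applying $(ii)$ with $w=w_0c[j]$ and $u=c[k]$, the identity $c[j]c[k]^{-1}=c[j,k]$ gives $wu^{-1}=w_0c[j,k]$ and $\ell(c[j]c[k]^{-1})=k-j=\ell(c[j])-\ell(c[k])$, which is exactly the length hypothesis $\ell(w_0c[j,k])=\ell(w_0c[j])+\ell(c[k])$; hence $\dd{c[k]}(p_j)=\dSp{w_0c[j,k]}$. Together with $\dd{c[j]}(p_j)=1$ this gives $\dschwv{j}=\swv{j}=\wv{j}+\sum_{k>j}\dSp{w_0c[j,k]}\wv{k}$, as asserted. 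I do not expect a genuine obstacle: the argument is combinatorics of reduced words together with the (length-increasing) form of the characterization $(ii)$, and the one point needing real care is checking the length-additivity hypothesis at each application of $(ii)$ — above all the cancellation identity $c[j]c[k]^{-1}=c[j,k]$, which is precisely what makes the coefficients come out as dual Schubert polynomials of the $c[j,k]$ rather than as something less transparent. Alternatively, the twisting relation $w_0\,\dd{u}\,w_0=(-1)^{\ell(u)}\dd{w_0uw_0}$ lets one deduce the whole statement formally from Proposition \ref{prop:sch-basis}; I would present whichever is shorter.
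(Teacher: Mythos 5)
Your proposal is correct and follows exactly the route the paper intends (the paper omits the proof as ``immediate''): verify \eqref{eq:basis-sys} from the characterization $(i)$--$(ii)$ of dual Schubert polynomials, using $\ell(w_0v)=\ell(w_0)-\ell(v)$, the descent structure of $c[j]$, and the cancellation $c[j]c[k]^{-1}=c[j,k]$ to get $\dd{c[j]}\dSp{w_0c[j]}=1$, $\dd{i}\dSp{w_0c[j]}=0$ for $i\le n-2$, and $\dd{c[k]}\dSp{w_0c[j]}=\dSp{w_0c[j,k]}$, then conclude via Theorem \ref{thm:ext-sym}. Your alternative remark about deducing the statement from Proposition \ref{prop:sch-basis} via $w_0\dd{u}w_0=(-1)^{\ell(u)}\dd{w_0uw_0}$ is likewise consistent with how the paper derives the dual characterization itself.
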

\Omit{
\MH{Ok to omit the proof?  It's a lot more obvious, given the simplified form of \eqref{eq:basis-sys}.}
\begin{proof}
One sees immediately that
$\dd{c[k]}\dSp{w_0c[j]}=\dSp{w_0c[j,k]}$, and therefore
\[
\dd{c[j]}\dSp{w_0c[j]}=1\aand \dd{i}\dSp{w_0c[j]}=0
\]
for every $i\neq n-1$.  The result follows.
\end{proof}
}
In \ref{example:lehmer}, we showed that the Schubert polynomials involved in
the exterior basis of $\extL{n}$ are elementary symmetric functions, namely,
\[
\Sp{c[j,k]}=\ef_{k-j}(x_1,\dots, x_{k-1}).
\]
The dual Schubert polynomials are, instead, naturally described by complete symmetric functions.
By definition, we have
\[
\dSp{w_0c[j,k]}=(-1)^{k-j}w_0(\Sp{w_0\cdot c[j,k]\cdot w_0})=(-1)^{k-j}w_0(\Sp{c[n-k+1,n-j+1]^{-1}})
\]
since $w_0\cdot c[j,k]\cdot w_0=s_{n-j}\cdots s_{n-k+1}$.
The permutation $c[n-k+1, n-j+1]^{-1}$ is still a Grassmannian permutation, whose unique descent
is at $n-k+1$ and whose partition is conjugate to that of $c[j,k]$. Therefore
\[
\Sp{c[n-k+1,n-j+1]^{-1}}=\hf_{k-j}(x_1,\dots, x_{n-k+1})
\]
and
\[
\dSp{w_0c[j,k]}=(-1)^{k-j}\hf_{k-j}(x_k,\dots, x_n).
\]
In particular, the relation $\dd{c[k]}\dSp{w_0c[j]}=\dSp{w_0c[j,k]}$ reads
\[
\dd{c[k]}((-1)^{n-j}\hf_{n-j}(x_n))=(-1)^{k-j}\hf_{k-j}(x_k, \dots, x_n)
\]
providing a different proof of \cite[Prop. 5.4]{abel-hogancamp}.

As in the Schubert case, one observes that the dual Schubert polynomials
give a solution of \eqref{eq:palphaNormalization}. Namely, one can
set $p_{\alpha}=\dSp{w_0\sigma_{\alpha}}$. Then
$\dd{\sigma_{\beta}}\dSp{w_0\sigma_{\alpha}}=\dSp{w_0\sigma_{\alpha}\sigma_{\beta}^{-1}}$,
\[
\dd{\sigma_{\alpha}}\dSp{w_0\sigma_{\alpha}}=1
\aand
\dd{i}\dSp{w_0\sigma_{\alpha}\sigma_{\beta}^{-1}}=0
\]
for every $\beta\succ\alpha$ and $i\not\in D_{\beta}$. It follows that there are elements in $\extL{n}$
\[
\dschwv{\alpha}=\wv{\alpha}+\sum_{\beta\succ\alpha}\dSp{w_0\sigma_{\alpha}\sigma_{\beta}^{-1}}\wv{\beta}
\]
which satisfy, in analogy with \ref{prop:sch-basis-2}, $\dschwv{\alpha}=(\dschwv{1})^{\alpha_1}\cdots(\dschwv{n})^{\alpha_n}$.

\begin{remark}\label{rem:reindex-dual-sch}
It follows from the discussion above that the dual Schubert exterior basis of $\extL{n}$
is concisely described in terms of complete functions. As before, it will be convenient
to reindex these elements. Henceforth, we will adopt the following notation
\[
\hwv{j}=\sum_{k=0}^{j-1}(-1)^k\hf_{k}(x_{n+1-j+k},\dots, x_{n})\wv{n+1-j+k}=\dschwv{n-j+1}
\]
\end{remark}

\Omit{
\subsubsection{Extended complete symmetric polynomials}\label{sss:complete}
Our second example involves complete symmetric polynomials.
For every $j\in\IZ_{\geq 0}$, the $j$th complete symmetric polynomial in the variables
$(x_1,\dots, x_n)$ is
\[
\hf_j(x_1,\dots, x_n)=\sum_{1\leq i_1\leq \cdots \leq i_j\leq n} x_{i_1}\cdots x_{i_j}
\]
\begin{proposition}
The elements $p_j=(-1)^{n-j}\hf_{n-j}(x_n)$ form a solution of \eqref{eq:basis-sys}.
In particular, the elements
\[
\hwv{j}=\wv{j}+\sum_{k>j}(-1)^{k-j}\hf_{k-j}(x_k,\dots,x_n)\wv{j}
\]
define an exterior basis of $\extL{n}$.
\end{proposition}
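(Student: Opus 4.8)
The plan is to check directly that the polynomials $p_j=(-1)^{n-j}\hf_{n-j}(x_n)$ satisfy the two conditions of \eqref{eq:basis-sys} and then to invoke Theorem \ref{thm:ext-sym}, in the form used in \ref{ss:basis}, to produce the exterior basis. The invariance condition is immediate: $\hf_{n-j}(x_n)=x_n^{n-j}$ is a polynomial in the single variable $x_n$, hence fixed by every permutation of $x_1,\dots,x_{n-1}$, so $p_j\in\Pol{n}^{\Sgp_{n-1}\times\Sgp_1}$. The entire content is therefore the computation of $\dd{c[k]}(p_j)$ for $k\geq j$, and in particular of $\dd{c[j]}(p_j)$.

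The computational heart of the argument is the identity
\[
\dd{k}\bigl(\hf_a(x_{k+1},\dots,x_n)\bigr)=-\hf_{a-1}(x_k,x_{k+1},\dots,x_n)
\qquad (1\leq k\leq n-1,\ a\geq 0),
\]
with the convention $\hf_{-1}=0$. I would prove it by writing $\hf_a(x_{k+1},\dots,x_n)=\sum_{b}x_{k+1}^b\,\hf_{a-b}(x_{k+2},\dots,x_n)$, using that $\dd{k}$ (by \eqref{eq:dd}) is linear over polynomials not involving $x_k,x_{k+1}$ to reduce to $\dd{k}(x_{k+1}^b)=-\hf_{b-1}(x_k,x_{k+1})$, and then reassembling with the standard splitting identity $\sum_{c}\hf_c(x_k,x_{k+1})\hf_{a-1-c}(x_{k+2},\dots,x_n)=\hf_{a-1}(x_k,\dots,x_n)$. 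A downward induction on $k$ then gives
\[
\dd{k}\dd{k+1}\cdots\dd{n-1}(x_n^m)=(-1)^{n-k}\,\hf_{m-(n-k)}(x_k,\dots,x_n),
\]
the right-hand side being $0$ when $m<n-k$; the base case $k=n-1$ is the identity above applied to $\hf_m(x_n)=x_n^m$, and each further $\dd{k}$ contributes a sign and lowers the degree by one while enlarging the variable set.

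Specializing to $c[k]=s_k\cdots s_{n-1}$, so $\dd{c[k]}=\dd{k}\cdots\dd{n-1}$, and taking $m=n-j$ gives
\[
\dd{c[k]}(p_j)=(-1)^{n-j}(-1)^{n-k}\hf_{k-j}(x_k,\dots,x_n)=(-1)^{k-j}\hf_{k-j}(x_k,\dots,x_n)
\qquad (k\geq j),
\]
and in particular $\dd{c[j]}(p_j)=1$, the remaining condition of \eqref{eq:basis-sys}. Substituting these coefficients into the formula $\swv{j}=\sum_{k\geq j}\dd{c[k]}(p_j)\wv{k}$ of \ref{ss:basis} recovers exactly $\hwv{j}=\wv{j}+\sum_{k>j}(-1)^{k-j}\hf_{k-j}(x_k,\dots,x_n)\wv{k}$, and Theorem \ref{thm:ext-sym} then asserts that $\{\hwv{j}\}$ is an exterior basis of $\extL{n}$.

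I do not expect a genuine obstacle; the only delicate points are bookkeeping of signs and of the growing variable sets in the induction, and recalling that $\hf_{n-j}(x_n)$ denotes the complete homogeneous symmetric polynomial in the single variable $x_n$. As a cross-check one may note that $p_j=(-1)^{n-j}x_n^{n-j}=\dSp{w_0c[j]}$, so the computation above is simply the relations $\dd{c[k]}\dSp{w_0c[j]}=\dSp{w_0c[j,k]}=(-1)^{k-j}\hf_{k-j}(x_k,\dots,x_n)$ already recorded in \ref{sss:dual-schubert}; the direct argument is, however, self-contained.
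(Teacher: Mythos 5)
Your proof is correct, and it takes a more self-contained route than the paper. The paper treats $p_j=(-1)^{n-j}\hf_{n-j}(x_n)$ as the dual Schubert polynomial $\dSp{w_0c[j]}$: in \ref{sss:dual-schubert} the identity $\dd{c[k]}\bigl((-1)^{n-j}\hf_{n-j}(x_n)\bigr)=(-1)^{k-j}\hf_{k-j}(x_k,\dots,x_n)$ is obtained from the characterization $\dd{u}\dSp{w}=\dSp{wu^{-1}}$ (when lengths add) together with the identification of $\Sp{c[n-k+1,n-j+1]^{-1}}$ as a complete symmetric function via the Grassmannian-permutation/Schur-function dictionary, and the proof of the proposition itself simply quotes this identity (the paper cites it as \cite[Prop.~5.4]{abel-hogancamp} and notes the dual Schubert argument reproves it). You instead prove the key divided-difference formula from scratch: the one-step identity $\dd{k}\bigl(\hf_a(x_{k+1},\dots,x_n)\bigr)=-\hf_{a-1}(x_k,\dots,x_n)$, established by the telescoping $u^b-v^b=(u-v)\hf_{b-1}(u,v)$ and the standard splitting of $\hf$ over a union of variable sets, followed by downward induction on $k$; this is elementary, avoids any appeal to (dual) Schubert polynomial theory or external references, and your sign bookkeeping is right. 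Both arguments then finish identically, checking $p_j\in\Pol{n}^{\Sgp_{n-1}\times\Sgp_1}$ and $\dd{c[j]}(p_j)=1$ and invoking the recipe $\swv{j}=\sum_{k\geq j}\dd{c[k]}(p_j)\wv{k}$ from Theorem \ref{thm:ext-sym}; note your explicit coefficient computation makes the paper's extra verification $\dd{i}\dd{c[k]}(p_j)=0$ for $i\neq k-1$ unnecessary, since membership in $\extL{n}$ already follows from Corollary \ref{cor:membership}. (You also silently corrected the typo in the statement, where the displayed sum should end in $\wv{k}$ rather than $\wv{j}$; your cross-check $p_j=\dSp{w_0c[j]}$ is exactly the bridge to the paper's route.)
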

\begin{proof}
It follows from \cite[Prop. 5.4]{abel-hogancamp}
\[
\dd{c[k]}((-1)^{n-j}\hf_{n-j}(x_n))=(-1)^{k-j}\hf_{k-j}(x_k, \dots, x_n)
\]
for every $k>j$.
In particular, if we set $p_j=(-1)^{n-j}\hf_{n-j}(x_n)$, we get
\[
\dd{c[j]}(p_j)=\dd{c[j]}\lp(-1)^{n-j}\hf_{n-j}(x_n)\rp=\hf_0(x_j,\dots, x_n)=1
\]
and
\[
\dd{i}\dd{c[k]}(p_j)=\dd{i}\dd{c[k]}\lp(-1)^{n-j}\hf_{n-j}(x_n)\rp=\dd{i}\lp(-1)^{k-j}\hf_{k-j}(x_k, \dots, x_n)\rp=0
\]
for every $k>j$ and $i\neq k-1$. The result follows.
\end{proof}
}

\subsubsection{A family of bases for $\extL{n}$}\label{sss:interpolate}
We now describe a collection of bases of $\extL{n}$ which interpolates between
the Schubert basis \ref{sss:schubert} (described in terms of elementary symmetric functions)
and the dual Schubert basis \ref{sss:dual-schubert} (described in terms of complete symmetric functions).
\Omit{
For every $j\in\IZ_{\geq 0}$, the $j$th elementary symmetric polynomial in the variables $(x_1,\dots, x_n)$ is
\[
\ef_j(x_1,\dots, x_n)=\sum_{1\leq i_1< \cdots < i_j\leq n} x_{i_1}\cdots x_{i_j}
\]
}
Recall that the elementary symmetric functions satisfy the relation
\[
\ef_j(x_1,\dots, x_{n-1})=\sum_{l=0}^{j}(-1)^{j-l}x_n^{j-l}\ef_l(x_1,\dots, x_n)
\]
For every $0\leq r\leq n-j$, set
\begin{align*}
p_j^{(r)}=&(-1)^r\xv{n}^r\cdot \ef_{n-j-r}(\xv{1},\dots, \xv{n-1})
\\
=&(-1)^r\xv{n}^r\sum_{l=0}^{n-j-r}(-1)^{n-j-r-l}x_n^{n-j-r-l}\ef_l(x_1,\dots, x_n)
\\
=&\sum_{l=0}^{n-j-r}(-1)^{n-j-l}\hf_{n-j-l}(x_n)\ef_l(x_1,\dots, x_n)
\end{align*}
\begin{proposition}
For every choice of $r$, the elements $p_j^{(r)}$, $1\leq j\leq n$, are
a solution of \eqref{eq:basis-sys}. In particular, the elements
\[
\schwvr{j}=\sum_{k\geq j}\dd{c[k]}(p_j^{(r)})\wv{k}
\]
define an exterior basis of $\extL{n}$.
\end{proposition}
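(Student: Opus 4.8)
The plan is to check that each family $\{p_j^{(r)}\}_{1\le j\le n}$ satisfies the two requirements of \eqref{eq:basis-sys}, and then to quote the general construction opening \ref{ss:basis} (which itself rests on Theorem \ref{thm:ext-sym}) to conclude that the elements $\schwvr{j}=\sum_{k\ge j}\dd{c[k]}(p_j^{(r)})\wv{k}$ form an exterior basis. The membership $p_j^{(r)}\in\Pol{n}^{\Sgp_{n-1}\times\Sgp_1}$ is immediate, since $\ef_{n-j-r}(\xv{1},\dots,\xv{n-1})$ is symmetric in $\xv{1},\dots,\xv{n-1}$ while the remaining factor $(-1)^r\xv{n}^r$ involves $\xv{n}$ only. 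So everything reduces to the normalization $\dd{c[j]}(p_j^{(r)})=1$; note $p_j^{(r)}$ is homogeneous of degree $r+(n-j-r)=n-j=\ell(c[j])$, so $\dd{c[j]}(p_j^{(r)})$ is at least guaranteed to be a scalar.

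I would handle the normalization by comparison with the Schubert case $r=0$. There $p_j^{(0)}=\ef_{n-j}(\xv{1},\dots,\xv{n-1})=\Sp{c[j]}$ by Example \ref{example:lehmer}(1), so $\dd{c[j]}(p_j^{(0)})=\Sp{\id}=1$ by the characterization of Schubert polynomials in \ref{sss:schubert} (this is precisely Proposition \ref{prop:sch-basis}). It then suffices to show $\dd{c[j]}(p_j^{(r)})$ does not depend on $r$. The key point is a telescoping identity, obtained from the recursion $\ef_m(\xv{1},\dots,\xv{n})=\ef_m(\xv{1},\dots,\xv{n-1})+\xv{n}\ef_{m-1}(\xv{1},\dots,\xv{n-1})$ applied with $m=n-j-r$:
\[
p_j^{(r)}-p_j^{(r+1)}=(-1)^r\,\xv{n}^r\,\ef_{n-j-r}(\xv{1},\dots,\xv{n}).
\]
Since $\ef_{n-j-r}(\xv{1},\dots,\xv{n})\in\symL{n}$ and divided difference operators are $\symL{n}$-linear, applying $\dd{c[j]}$ yields $(-1)^r\,\ef_{n-j-r}(\xv{1},\dots,\xv{n})\cdot\dd{c[j]}(\xv{n}^r)$. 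Now $\dd{c[j]}=\dd{j}\dd{j+1}\cdots\dd{n-1}$ is a composite of $n-j$ divided difference operators, each homogeneous of polynomial degree $-1$ (so annihilating constants), whence $\dd{c[j]}$ annihilates every polynomial of degree $<n-j$; as $0\le r\le n-j-1$ in this comparison, $\dd{c[j]}(\xv{n}^r)=0$, and therefore $\dd{c[j]}(p_j^{(r)})=\dd{c[j]}(p_j^{(r+1)})$. Descending induction from $r=0$ then gives $\dd{c[j]}(p_j^{(r)})=1$ for every admissible $r$.

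With \eqref{eq:basis-sys} established, the discussion opening \ref{ss:basis} applies verbatim: the $\schwvr{j}$ lie in $\extL{n}$, are unitriangular with respect to $\{\wv{1},\dots,\wv{n}\}$ (the $k=j$ coefficient is $\dd{c[j]}(p_j^{(r)})=1$), and hence determine a ring isomorphism $\extL{n}\simeq\symL{n}\ten\Wedge{\schwvr{1},\dots,\schwvr{n}}$, i.e.\ an exterior basis. The only step with real content is the telescoping identity together with the vanishing $\dd{c[j]}(\xv{n}^r)=0$ for $r<n-j$; a direct evaluation of $\dd{c[j]}(p_j^{(r)})$ from the expansion of $p_j^{(r)}$ in the monomials $\hf_{n-j-l}(\xv{n})\,\ef_l(\xv{1},\dots,\xv{n})$ is also possible, but it is more cumbersome because of the signs produced by $\dd{c[j]}(\xv{n}^{n-j-l})$, which is why I would prefer the inductive comparison above.
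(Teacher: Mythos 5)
Your proof is correct: membership $p_j^{(r)}\in\Pol{n}^{\Sgp_{n-1}\times\Sgp_1}$ is immediate, and once $\dd{c[j]}(p_j^{(r)})=1$ is known, the discussion opening \ref{ss:basis} (resting on Theorem \ref{thm:ext-sym}) does give the exterior basis, so the only real content is the normalization. Where you differ from the paper is in how that normalization is verified. The paper works from the expansion $p_j^{(r)}=\sum_{l=0}^{n-j-r}(-1)^{n-j-l}\hf_{n-j-l}(x_n)\ef_l(x_1,\dots,x_n)$ and the dual-Schubert computation of $\dd{c[k]}$ on powers of $x_n$, obtaining by $\symL{n}$-linearity the closed formula $\dd{c[k]}(p_j^{(r)})=\sum_{l=0}^{k-j}(-1)^{k-j-l}\ef_l(x_1,\dots,x_n)\,\hf_{k-j-l}(x_k,\dots,x_n)$ for every $k\geq j$; setting $k=j$ gives $1$, and the same formula yields the vanishing $\dd{i}\dd{c[k]}(p_j^{(r)})=0$ for $i\neq k-1$, i.e.\ all coefficients of $\schwvr{j}$ explicitly. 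You instead reduce to the Schubert case $r=0$ via the telescoping identity $p_j^{(r)}-p_j^{(r+1)}=(-1)^r x_n^{r}\,\ef_{n-j-r}(x_1,\dots,x_n)$, killed under $\dd{c[j]}$ by $\symL{n}$-linearity and the degree argument $\dd{c[j]}(x_n^{r})=0$ for $r<n-j$; both steps are valid (each $\partial_i$ lowers polynomial degree by exactly one, so a composite of $n-j$ of them annihilates anything of smaller degree), so this is a clean and somewhat shorter alternative. What it buys is avoiding the explicit evaluation of $\dd{c[k]}$ on $\hf_{m}(x_n)$; what it loses is the explicit expression for the higher coefficients $\dd{c[k]}(p_j^{(r)})$, $k>j$, which the paper's computation produces for free and which makes the interpolation between the elementary and complete descriptions visible at the level of coefficients. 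One small terminological slip: your induction runs upward in $r$ starting from $r=0$, so it is not a ``descending'' induction; this does not affect the argument.
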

\begin{proof}
Since $\dd{c[k]}\lp \hf_{n-j-l}(x_n)\rp=0$ for every $k\geq j$ and $l>k-j$, we have
\[
\dd{c[k]}(p_j^{(r)})=\sum_{l=0}^{k-j}(-1)^{k-j-l}\ef_l(x_1,\dots, x_n)\cdot \hf_{k-j-l}(\xv{k},\dots, \xv{n})
\]
Therefore,
\[
\dd{c[j]}(p_j^{(r)})=1\aand\dd{i}\dd{c[k]}(p_j^{(r)})=0
\]
for every $k>j$ and $i\neq k-1$. The result follows.
\end{proof}
This basis interpolates between \ref{sss:schubert} and \ref{sss:dual-schubert}.
Specifically, for $r=0$,
\[
p_j^{(0)}=\ef_{n-j}(\xv{1},\dots,\xv{n-1})=\Sp{c[j]}
\]
and we obtain the Schubert exterior basis \ref{sss:schubert}.
Instead, for $r=n-j$,
\[p_j^{(n-j)}=(-1)^{n-j}\hf_{n-j}(\xv{n})=\dSp{w_0c[j]}\]
and we obtain the dual Schubert exterior basis \ref{sss:dual-schubert}.
Indeed, more precisely, we have, for any $0\leq r\leq n-j$,
\begin{equation}
p_j^{(r)}=\Sp{c[j+r]}\cdot\dSp{w_0c[n-r]}
\end{equation}

\begin{example}
Set $n=3$, then we have
\[
\begin{array}{|c|c|c|}
\hline
w & \Sp{w} & \dSp{w_0w}\\
\hline \hline
\id & 1 & 1\\
\hline
s_1 & x_1& -x_2-x_3 \\
\hline
s_2 & x_1+x_2 & -x_3\\
\hline
s_1s_2 & x_1x_2 & x_3^2 \\
\hline
s_2s_1 & x_1^2 & x_2x_3 \\
\hline
s_1s_2s_1 & x_1^2x_2 & x_2x_3^2 \\
\hline
\end{array}
\]
In particular, the Schubert exterior basis of $\extL{3}$ is
\begin{align*}
\schwv{1}=\wv{1}+x_1\wv{2}+x_1x_2\wv{3}
\qquad
\schwv{2}=\wv{2}+(x_1+x_2)\wv{3}
\qquad
\schwv{3}&=\wv{3}
\end{align*}
Instead, the dual Schubert exterior basis is
\begin{align*}
\dschwv{1}=\wv{1}-(x_2+x_3)\wv{2}+x_3^2\wv{3}
\qquad
\dschwv{2}=\wv{2}-x_3\wv{3}
\qquad
\dschwv{3}=\wv{3}
\end{align*}
Other possible choices are obtained replacing $\schwv{1}$ or $\dschwv{1}$ with
\[
\schwv{1}^{(1)}=\wv{1}+x_1\wv{2}-(x_1+x_2)x_3\wv{3}
\]
corresponding to the choice $p_1^{(1)}$ in \ref{sss:interpolate}.

\Omit{
$\extL{3} = \symL{3}\otimes \Lambda(\wv{1}^s,\wv{2}^s,\wv{3}^s)$ where
\begin{equation*}
\wv{i}^s = \wv{i} + \sum_{i<k<n}\mathcal{S}_{s_i s_{i+1} \cdots s_{k-1}} \wv{k}
\end{equation*}
and $\mathcal{S}_{s_i s_{i+1} \cdots s_{k-1}}$ are Schubert polynomials. For $\Sgp_3$ the Schubert polynomials are as follows:\\

\begin{equation*}
\begin{array}{rrr}
\mathcal{S}_{id}  = 1, & \mathcal{S}_{s_1} = x_1, & \mathcal{S}_{s_2} = x_1 + x_2\\
\mathcal{S}_{s_1 s_2} = x_1 x_2, & \mathcal{S}_{s_2 s_1} = x_1^2, & \mathcal{S}_{s_1 s_2 s_1} = x_1^2 x_2
\end{array}
\end{equation*}
Then we have :
\begin{equation*}
\begin{aligned}
\wv{1}^s &= \wv{1} + \sum_{1<k\leq 3} \mathcal{S}_{s_1 \cdots s_{k-1}} \wv{k}
      = \wv{1} + \mathcal{S}_{s_1}\wv{2} + \mathcal{S}_{s_1 s_2}\wv{3}
      = \wv{1} + x_1 \wv{2}+ x_1 x_2 \wv{3}\\
\wv{2}^s &= \wv{2} + \sum_{2<k\leq 3} \mathcal{S}_{s_2 \cdots s_{k-1}} \wv{k}
      = \wv{2} + \mathcal{S}_{s_2}\wv{3}
      = \wv{2} + (x_1 + x_2)\wv{3}\\
\wv{3}^s &= \wv{3}
\end{aligned}
\end{equation*}

\noindent Also let us verify that $\wv{1}^s, \wv{2}^s$, and $\wv{3}^s$ are in $\mathsf{ker}\dd{1} \cap \mathsf{ker}\dd{2}$
\begin{equation*}
\begin{aligned}
\dd{1}(\wv{1}^s) &= \dd{1}(\wv{1} + x_1\wv{2} + x_1x_2\wv{3})\\
                 &= -\wv{2} + \dd{1}(x_1)\wv{2} + x_1\dd{1}(\wv{2})-(x_1-x_2)\dd{1}(x_1)\dd{1}(\wv{2}) + \dd{1}(x_1x_2)\wv{3} + x_1x_2\dd{1}(\wv{3}) -(x_1-x_2)\dd{1}(x_1x_2)\dd{1}(\wv{3})\\
                 &= 0
\end{aligned}
\end{equation*}

\begin{equation*}
\begin{aligned}
\dd{2}(\wv{1}^s) &= \dd{2}(\wv{1} + x_1\wv{2} + x_1x_2\wv{3})\\
                 &= \dd{2}(\wv{1}) + \dd{2}(x_1)\wv{2} + x_1\dd{2}(\wv{2}) -(x_2-x_3)\dd{2}(x_1)\dd{2}(\wv{2}) + \dd{2}(x_1x_2)\wv{3} + x_1x_3\dd{2}\wv{3} - (x_2-x_3)\dd{2}(x_1x_3)\dd{2}(\wv{3})\\
                 &= 0
\end{aligned}
\end{equation*}
\noindent Similarly it also follows that $\dd{1}(\wv{2}^s) = \dd{2}(\wv{2}^s) = 0$ and $\dd{1}(\wv{3}^s) = \dd{2}(\wv{3}^s) = 0$.
}
\end{example}

\subsubsection{Other bases}
We conclude this section with two more examples.
\begin{itemize}
\item[(i)] \emph{Power functions}.
One can consider power symmetric polynomials and set
\[
p_j=(-1)^{n-j}\pf_{n-j}(x_1,\dots, x_{n-1})
\]
On the other hand,
\[
p_j=(-1)^{n-j}\pf_{n-j}(x_1,\dots, x_{n-1})=(-1)^{n-j}\pf_{n-j}(x_1,\dots, x_n)+(-1)^{n-j}\hf_{n-j}(x_n)
\]
Therefore it simply gives back the description in terms of complete symmetric functions.\\
\item[(ii)] \emph{Symmetrizers}
The easiest example, although computationally most expensive, is obtained by full symmetrization of the
exterior variables $\wv{j}$, i.e. for every $1\leq j\leq n$, set
\[
\swv{j}=\frac{1}{n!}\sum_{\sigma\in\Sgp_n}\sigma(\wv{j})
\]
\Omit{This is very obvious, but yet "obscure", I keep it here temporarily, just for record. It seems plausible that
the coefficient of $\swv{j}$ along $\wv{n}$ is $\frac{1}{n-j+1}\sum_{r=0}^{n-j}p_j^{(r)}$, see below, which is clearly
another good solution of the system above. I checked it only for $n=2,3$ though.}
\end{itemize}

\subsection{Combinatorial identities}
The following results give the relationship between $\{\ef_{i}^w\}$ and $\{\hf_{i}^w\}$ (see Remarks~\ref{rem:reindex-sch} and \ref{rem:reindex-dual-sch}), where
\begin{align}\label{eq:reindex}
\hwv{j}&=\sum_{k=0}^{j-1}(-1)^k\hf_{k}(x_{n+1-j+k},\dots, x_{n})\wv{n+1-j+k}\\
\ewv{j}&=\sum_{k=0}^{j-1}\ef_{k}(x_1,\dots, x_{n-j+k})\wv{n+1-j+k}
\end{align}

We use the following identity between elementary symmetric polynomials and complete homogeneous symmetric polynomials to prove the next proposition:
\begin{lemma}\label{eqn:e-h_different_numberof_variables}
\begin{equation}
\ef_{k}(x_1,\cdots,x_{n-j+k}) =\sum_{t=0}^{k}(-1)^{k+t}\hf_{k-t}(x_{n-j+k+1},\cdots,x_n)\ef_{t}(x_1,\cdots,x_n).
\end{equation}
\end{lemma}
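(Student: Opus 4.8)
The plan is to prove the identity
\[
\ef_{k}(x_1,\dots,x_{n-j+k}) =\sum_{t=0}^{k}(-1)^{k+t}\hf_{k-t}(x_{n-j+k+1},\dots,x_n)\ef_{t}(x_1,\dots,x_n)
\]
by a generating-function argument. Recall the classical generating functions
\[
\sum_{r\geq 0}\ef_r(y_1,\dots,y_m)\,s^r=\prod_{i=1}^m(1+y_i s),
\qquad
\sum_{r\geq 0}\hf_r(y_1,\dots,y_m)\,s^r=\prod_{i=1}^m\frac{1}{1-y_i s}.
\]
Write the full variable set as $\{x_1,\dots,x_n\}$ and split it as the union of $X':=\{x_1,\dots,x_{n-j+k}\}$ and $X'':=\{x_{n-j+k+1},\dots,x_n\}$. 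First I would observe that the multiplicativity of the elementary generating function over a disjoint union of alphabets gives
\[
\prod_{x\in X'}(1+xs)=\Bigl(\prod_{x\in X'\cup X''}(1+xs)\Bigr)\cdot\prod_{x\in X''}\frac{1}{1+xs}
=\Bigl(\sum_{t\geq 0}\ef_t(x_1,\dots,x_n)s^t\Bigr)\cdot\prod_{x\in X''}\frac{1}{1+xs}.
\]

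Next I would identify the second factor: substituting $s\mapsto -s$ in the complete-homogeneous generating function shows
\[
\prod_{x\in X''}\frac{1}{1+xs}=\sum_{u\geq 0}(-1)^u\hf_u(x_{n-j+k+1},\dots,x_n)\,s^u.
\]
Multiplying the two power series and extracting the coefficient of $s^k$ yields
\[
\ef_k(x_1,\dots,x_{n-j+k})=\sum_{t+u=k}(-1)^u\hf_u(x_{n-j+k+1},\dots,x_n)\,\ef_t(x_1,\dots,x_n),
\]
and since $u=k-t$ and $(-1)^{k-t}=(-1)^{k+t}$ this is exactly the claimed identity. (Alternatively, one can phrase the same computation as the standard relation $\sum_{t}(-1)^{t}\ef_t\hf_{m-t}=0$ applied to the alphabet $X''$, combined with the expansion of $\ef_k$ of a sub-alphabet in terms of $\ef_t$ of the full alphabet and $\hf$ of the complement — this is the ``dual'' form of the recursion $\ef_m(x_1,\dots,x_{N})=\ef_m(x_1,\dots,x_{N-1})+x_N\ef_{m-1}(x_1,\dots,x_{N-1})$ iterated over the deleted variables.)

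There is essentially no serious obstacle here: the only point requiring a little care is bookkeeping the index ranges, namely that the sum over $t$ may be truncated at $t=k$ (higher $t$ contribute a negative-degree $\hf$, hence nothing) and that the formula is an identity of polynomials independent of any symmetry considerations for the $\dd{i}$. So the ``hard part'' is merely to state the two generating-function identities with the correct sign conventions and keep the two alphabets $X'$ and $X''$ straight; once that is set up, comparing coefficients of $s^k$ is immediate.
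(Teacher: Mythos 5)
Your proof is correct. The paper proves the identity by a direct computation running in the opposite direction: it starts from the right-hand side, expands $\ef_{t}(x_1,\dots,x_n)$ over the split alphabet as $\sum_{a}\ef_{a}(x_1,\dots,x_{n-j+k})\ef_{t-a}(x_{n-j+k+1},\dots,x_n)$, reorders and re-indexes the double sum, and then invokes the orthogonality relation $\sum_{t'}(-1)^{t'}\hf_{t'}\ef_{m-t'}=\delta_{m,0}$ on the complementary alphabet, so that only the $a=k$ term survives and gives $(-1)^k\ef_k(x_1,\dots,x_{n-j+k})$. Your generating-function argument encodes exactly the same two facts --- multiplicativity of $\prod(1+xs)$ over a disjoint union of alphabets and the $e$--$h$ duality on the complement --- in the single factorization $\prod_{x\in X'}(1+xs)=\bigl(\prod_{x\in X'\cup X''}(1+xs)\bigr)\prod_{x\in X''}(1+xs)^{-1}$, and then reads off the coefficient of $s^k$ (your truncation remark at $t=k$ and the sign identification $(-1)^{k-t}=(-1)^{k+t}$ are both fine). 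So the mathematical content coincides, but your packaging buys a cleaner bookkeeping: it avoids the double-sum re-indexing and the delta-function step of the paper's computation, while the paper's version stays entirely at the level of elementary manipulations of finite sums without appealing to formal power series. Your parenthetical alternative phrasing is, in fact, essentially the paper's proof.
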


\begin{proof}
Using standard facts about elementary and complete symmetric functions we have
\begin{align*}
&\sum_{t=0}^{k}(-1)^{t}\hf_{k-t}(x_{n-j+k+1},\cdots,x_n)\ef_{t}(x_1,\cdots,x_n)\\
&=\sum_{t=0}^{k}(-1)^{t}\hf_{k-t}(x_{n-j+k+1},\cdots,x_n) \left( \sum_{a=0}^{t}\ef_{a}(x_1,\cdots,x_{n-j+k})\ef_{t-a}(x_{n-j+k+1},\cdots,x_n)\right)\\
&=\sum_{a=0}^{k}\sum_{t=a}^{k}(-1)^{t}\hf_{k-t}(x_{n-j+k+1},\cdots,x_n)\ef_{t-a}(x_{n-j+k+1},\cdots,x_n)\ef_{a}(x_1,\cdots,x_{n-j+k})\\
&=\sum_{a=0}^{k}\sum_{t'=0}^{k-a}(-1)^{k-t'}\hf_{t'}(x_{n-j+k+1},\cdots,x_n)\ef_{k-a-t'}(x_{n-j+k+1},\cdots,x_n)\ef_{a}(x_1,\cdots,x_{n-j+k})\\
&=(-1)^k\sum_{a=0}^{k}\ef_{a}(x_1,\cdots,x_{n-j+k})\left(\sum_{t'=0}^{k-a}(-1)^{t'}\hf_{t'}(x_{n-j+k+1},\cdots,x_n)\ef_{k-a-t'}(x_{n-j+k+1},\cdots,x_n)\right)\\
&=(-1)^k\sum_{a=0}^{k}\ef_{a}(x_1,\cdots,x_{n-j+k})\left(\delta_{a,k}\right)\\
&= (-1)^k\ef_{k}(x_1,\cdots,x_{n-j+k}).
\end{align*}
\end{proof}

\begin{proposition}\label{prop:e_and_h}
For any $1\leq j \leq n$, we have
\begin{align*}
\ef_{j}^w = \sum_{k=0}^{j-1}\ef_{k}\hf_{j-k}^w.
\end{align*}
\end{proposition}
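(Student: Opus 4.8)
The plan is to prove the identity by expanding both sides in the basis $\{\wv{1},\dots,\wv{n}\}$ of the degree-one part of $\extP{n}$ and matching coefficients of each exterior variable, so that all of the combinatorial content is carried by Lemma~\ref{eqn:e-h_different_numberof_variables}. Since $\ewv{j}$ and each $\hwv{j-k}$ are homogeneous of degree one in the exterior variables and, by \eqref{eq:reindex}, involve only $\wv{n+1-j},\wv{n+2-j},\dots,\wv{n}$, it suffices to check that the coefficient of $\wv{n+1-j+\mu}$ agrees on both sides for each $\mu=0,1,\dots,j-1$. Reading off the left-hand side from \eqref{eq:reindex}, the coefficient of $\wv{n+1-j+\mu}$ in $\ewv{j}$ is simply $\ef_{\mu}(x_1,\dots,x_{n-j+\mu})$.

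For the right-hand side I would substitute $\hwv{j-k}=\sum_{l=0}^{j-k-1}(-1)^l\hf_l(x_{n+1-j+k+l},\dots,x_n)\wv{n+1-j+k+l}$ into $\sum_{k=0}^{j-1}\ef_k\hwv{j-k}$ and collect the terms whose exterior variable is $\wv{n+1-j+\mu}$. This forces $k+l=\mu$, hence $l=\mu-k$ with $k$ running over $0\le k\le\mu$ (the constraint $l\le j-k-1$ is automatic since $\mu\le j-1$), and along the way the window $x_{n+1-j+k+l},\dots,x_n$ collapses to $x_{n-j+\mu+1},\dots,x_n$. Thus the coefficient of $\wv{n+1-j+\mu}$ on the right is
\[
\sum_{k=0}^{\mu}(-1)^{\mu-k}\,\hf_{\mu-k}(x_{n-j+\mu+1},\dots,x_n)\,\ef_k(x_1,\dots,x_n).
\]
Since $(-1)^{\mu-k}=(-1)^{\mu+k}$, this is precisely the right-hand side of Lemma~\ref{eqn:e-h_different_numberof_variables} with its summation index renamed and its parameter $k$ set to $\mu$; the lemma therefore identifies it with $\ef_{\mu}(x_1,\dots,x_{n-j+\mu})$, matching the left-hand coefficient. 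This establishes the proposition.

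The only step requiring care is the index bookkeeping in the second paragraph — tracking which block $x_a,\dots,x_n$ of variables appears in each $\hf$ once the double sum over $k$ and $l$ is merged into a single sum over $k$ — but there is no genuine obstacle: once the coefficient of $\wv{n+1-j+\mu}$ is correctly extracted on each side, the claim becomes literally the content of Lemma~\ref{eqn:e-h_different_numberof_variables}, which does all the real work.
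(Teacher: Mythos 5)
Your proposal is correct and takes essentially the same route as the paper: expand $\sum_{k}\ef_k\hf^w_{j-k}$ via \eqref{eq:reindex}, collect the coefficient of each $\wv{n+1-j+\mu}$, and identify it with $\ef_\mu(x_1,\dots,x_{n-j+\mu})$ by Lemma~\ref{eqn:e-h_different_numberof_variables}. The only difference is cosmetic — you match coefficients term-by-term rather than rewriting the full sum and re-indexing at the end — and your index bookkeeping (the range $0\leq k\leq\mu$, the collapsed variable window $x_{n-j+\mu+1},\dots,x_n$) is accurate.
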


\begin{proof} Using the definition of $\hf_{j}^w$ we have
\begin{align*}
\sum_{k=0}^{j-1}\ef_{k}\hf_{j-k}^w
          &= \ef_{0}(x_1,\cdots,x_n)\hf_{j}^w + \ef_{1}(x_1,\cdots,x_n)\hf_{j-1}^w +\cdots + \ef_{j-1}(x_1,\cdots,x_n)\hf_{1}^w\\
          &= \wv{n+1-j} + \sum_{k=1}^{j-1} \left(\sum_{l=0}^{k-1}(-1)^{k-l}\hf_{k-l}(x_{n-j+k+1},\cdots,x_n)\ef_{l}(x_1,\cdots,x_n)\right ) \wv{n-j+k+1}\\
          &=\wv{n+1-j} + \sum_{n+1-j<k\leq n} \ef_{k-(n+1-j)}(x_1,\cdots,x_{k-1})\wv{k}\\
          &= \ef_{j}^w
\end{align*}
where the third equality follows from Lemma~\ref{eqn:e-h_different_numberof_variables} and the last step comes from change of variables.
\end{proof}

\Omit{
\subsection{Extended symmetric functions}

There are natural surjective algebra homomorphisms $\phi_{n+1}\maps \extL{n+1} \to \extL{n}$ determined by sending $w_i \to w_{i-1}$ and $x_i \to x_{i-1}$.  Our reindexing conventions  $\hwv{j} := \dschwv{n+1-j}$ were chosen so that the variables $\hwv{j}$ are stable with respect to these surjections
 \[
 \phi_{n+1}(\hf^w_j) := \phi_{n+1}(\dschwv{n+2-j}) = \dschwv{n+1-j}  = \hf^w_j.
 \]
The ring of {\em extended symmetric functions} $ \extL{}$ is defined to be the inverse limit of this system of inclusions.

Our results from the previous section motivate the following definition.

\begin{definition}
The super Hopf algebra of extended symmetric functions $\extL{}=(\extL{})_0 \oplus (\extL{})_1$ has generators
\[
 \left\{ \hf_{a} \mid a \geq 0  \right\}\in (\extL{})_0 \qquad
  \left\{ \hf_{a}^w \mid a \geq 0  \right\}\in (\extL{})_1 \qquad
\]
with $\hf_0=1$.  The generators satisfy the relations
\begin{equation}
 \hf_a \hf_b = \hf_b \hf_a, \qquad
 \hf^w_a \hf_b = \hf_b \hf^w_a, \qquad
 \hf^w_a \hf^w_b = -\hf^w_b \hf^w _a, \quad (\hf^w_a)^2 = 0.
\end{equation}

The coproduct $\Delta\maps \extL{} \to \extL{} \otimes \extL{}$ is defined on generators by
\begin{align}
  \Delta(\hf_{n}) &= \sum_{k} \hf_{k} \otimes \hf_{n-k} \\
  \Delta(\hf^w_{n}) &= \sum_{k} \hf_{k} \otimes \hf^w_{n-k}
\end{align}
The counit $\varepsilon \maps  \extL{} \to \Bbbk$ is defined by
\begin{equation}
  \varepsilon(\hf_{n}) =   \varepsilon(\hf^w_{n}) = \delta_{n,0}.
\end{equation}

\end{definition}
}


\section{Solomon's Theorem}\label{s:Sol-thm}
\nc\bfx{\mathbf{x}}
\nc\bfdx{\mathbf{dx}}
\nc\bff{\mathbf{f}}
\nc\bfdf{\mathbf{df}}
\nc\bfe{\mathbf{e}}
\nc\bfde{\mathbf{de}}
\nc\bfp{\mathbf{p}}
\nc\bfq{\mathbf{q}}
\nc\bfw{\boldsymbol{\omega}}
\nc\bfdw{\mathbf{d}\boldsymbol{\omega}}
\nc\bfal{\boldsymbol{\alpha}}
\nc\sfP{\mathsf{P}}
\nc\sfJ{\mathsf{J}}
\nc\sfQ{\mathsf{Q}}
\nc\inv{^{-1}}

\subsection{Superpolynomials and superinvariants}

Fix an integer $n\geq 1$.  Let $\bfx$ denote a set of formal even variables $x_1,\ldots,x_n$, and let $\bfdx$ denote a set of formal odd variables $dx_1,\ldots,dx_n$.  Here ``odd'' means that these variables are assumed to anti-commute amongst themselves and square to zero.  Thus, $\IQ[\bfx,\bfdx]$ is short-hand for the \emph{superpolynomial ring}
\[
\IQ[\bfx,\bfdx] := \IQ[x_1,\ldots,x_n]\otimes_\IQ \bigwedge[dx_1,\ldots,dx_n].
\]
We make this ring bigraded by declaring that $\deg(x_i) = (1,0)$ and $\deg(dx_i) = (0,1)$.

The symmetric group $\Sgp_n$ acts on $\IQ[\bfx,\bfdx]$ by algebra automorphisms, defined by permuting indices: $w(x_i)=x_{w(i)}$ and $w(dx_i)=dx_{w(i)}$.  Note that this action preserves the bidegree.
\begin{theorem}[Solomon \cite{Solomon}]\label{thm:solomon}
For any family $\bff=\{f_1,\dots,f_n\}$ of algebraically independent generators of $\IQ[\bfx]^{\Sgp_n}$,
\[
\IQ[\bfx,\bfdx]^{\Sgp_n}=\IQ[\bff,\bfdf]
\]
\end{theorem}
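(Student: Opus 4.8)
The plan is to prove the nontrivial inclusion $\IQ[\bfx,\bfdx]^{\Sgp_n}\subseteq\IQ[\bff,\bfdf]$ by passing to the field of fractions and descending back. The reverse inclusion is immediate: each $f_i\in\IQ[\bfx]^{\Sgp_n}$ and, since the de Rham differential commutes with the $\Sgp_n$-action, each $df_i=\sum_j\partial_j(f_i)\,dx_j$ is $\Sgp_n$-invariant as well. For the other direction, set $L=\IQ(\bfx)$ and $K=L^{\Sgp_n}$; since $\{f_i\}$ generates the invariant ring one has $K=\IQ(\bff)$ and $L/K$ is Galois with group $\Sgp_n$. Because the $f_i$ are algebraically independent, the Jacobian $J:=\det(\partial_j f_i)_{i,j=1}^n$ is nonzero, so the matrix $(\partial_j f_i)$ is invertible over $L$ and $\{df_1,\dots,df_n\}$ is an $L$-basis of $\bigoplus_j L\,dx_j$. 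Hence, after inverting the even variables, $L\otimes_\IQ\Wedge{\bfdx}=L\otimes_\IQ\Wedge{\bfdf}$, and every $\alpha\in\IQ[\bfx,\bfdx]$ has a unique expansion $\alpha=\sum_{S\subseteq\{1,\dots,n\}}c_S\,df_S$ with $c_S\in L$ and $df_S:=\bigwedge_{i\in S}df_i$ (indices in increasing order). If $\alpha$ is $\Sgp_n$-invariant, then applying $w\in\Sgp_n$ fixes each $df_i$, hence each $df_S$, so uniqueness forces $w(c_S)=c_S$; thus every $c_S$ lies in $K=\IQ(\bff)$.

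The task then reduces to promoting $c_S\in\IQ(\bff)$ to $c_S\in\IQ[\bff]$, for which I would use two classical facts about $\Sgp_n$. First, the module $\IQ[\bfx]^{\operatorname{sgn}}$ of sign-anti-invariant polynomials is free of rank one over $\IQ[\bfx]^{\Sgp_n}=\IQ[\bff]$, generated by the Vandermonde $\Delta=\prod_{i<j}(x_i-x_j)$: an anti-invariant vanishes on each hyperplane $x_i=x_j$, hence is divisible by $\Delta$, and the quotient is an invariant. Second, $J$ is itself anti-invariant --- comparing the two sides of $df_1\wedge\cdots\wedge df_n=J\,dx_1\wedge\cdots\wedge dx_n$ under $w$ gives $w(J)=\operatorname{sgn}(w)\,J$ --- and in fact $J=c\,\Delta$ for a nonzero scalar $c$: writing $f_i=f_i(\bfe)$ in terms of the elementary symmetric polynomials and applying the chain rule, $J=\det\!\big(\partial f_i/\partial e_k\big)\big|_{\bfe=\bfe(\bfx)}\cdot\det\!\big(\partial e_k/\partial x_j\big)$, where the second factor is $\pm\Delta$ by the classical Jacobian identity for $\bfe$, and the first factor is a nonzero constant because $\bfe\mapsto\bff$ is an automorphism of affine $n$-space, so its Jacobian is a unit in $\IQ[\bfe]$. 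Consequently $\IQ[\bfx]^{\operatorname{sgn}}=J\cdot\IQ[\bff]$.

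Now fix $S$ and put $T=S^c$. The super-form $\alpha\wedge df_T$ again lies in $\IQ[\bfx,\bfdx]$, as $df_T$ has polynomial coefficients. In the expansion $\alpha=\sum_{S'}c_{S'}df_{S'}$, only subsets $S'\subseteq S$ contribute to $\alpha\wedge df_T$, and the unique summand of top exterior degree is $c_S\,df_S\wedge df_T=\pm c_S\,df_1\wedge\cdots\wedge df_n=\pm c_S\,J\,dx_1\wedge\cdots\wedge dx_n$. Hence the coefficient of $dx_1\wedge\cdots\wedge dx_n$ in $\alpha\wedge df_T$ equals $\pm c_SJ$; this is a polynomial in $\IQ[\bfx]$, and it is anti-invariant because $\alpha\wedge df_T$ is $\Sgp_n$-invariant while $dx_1\wedge\cdots\wedge dx_n$ is anti-invariant. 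Therefore $c_SJ\in\IQ[\bfx]^{\operatorname{sgn}}=J\cdot\IQ[\bff]$, and since $\IQ[\bfx]$ is an integral domain with $J\neq0$ we conclude $c_S\in\IQ[\bff]$. Thus $\alpha=\sum_S c_S(\bff)\,df_S\in\IQ[\bff]\otimes\Wedge{\bfdf}=\IQ[\bff,\bfdf]$, as desired.

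I expect the only genuinely delicate point to be the identity $J=c\Delta$ in the stated generality (arbitrary, possibly inhomogeneous, generators $f_i$); everything else is formal linear algebra over $L$ together with the rank-one structure of the anti-invariant module. As a sanity check, and an alternative route when the $f_i$ are homogeneous (e.g. $f_i=e_i$), one can instead verify that the bigraded Hilbert series of $\IQ[\bff,\bfdf]$ and of $\IQ[\bfx,\bfdx]^{\Sgp_n}$ both equal $\prod_{i=1}^n\frac{1+q^{i-1}t}{1-q^i}$ --- the former because the $f_i$ are algebraically independent with $df_1\wedge\cdots\wedge df_n\neq0$, the latter by Molien's formula --- and then combine this with the inclusion already proved; but the direct argument above is cleaner and needs no homogeneity assumption.
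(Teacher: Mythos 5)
Your proof is correct, and since the paper does not prove this statement at all --- Theorem \ref{thm:solomon} is simply quoted from Solomon's paper --- there is no internal argument to compare against; what you have written is essentially Solomon's original argument, specialized from finite reflection groups to $\Sgp_n$. The structure is sound at every step: the expansion of an invariant superpolynomial in the basis $\{df_S\}$ over the fraction field $L=\IQ(x_1,\dots,x_n)$, the observation that invariance plus uniqueness forces the coefficients $c_S$ into $L^{\Sgp_n}=\IQ(f_1,\dots,f_n)$, and the descent to polynomial coefficients by wedging with the complementary product $df_{S^c}$, which isolates $\pm c_S J$ as the (necessarily polynomial, necessarily sign-isotypic) top coefficient. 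The two classical inputs are correctly identified and correctly used: the sign-isotypic component of $\IQ[x_1,\dots,x_n]$ is free of rank one over the invariants with generator the Vandermonde $\Delta$, and $J=\det(\partial f_i/\partial x_j)=c\Delta$ with $c\in\IQ^\times$. Your chain-rule argument for the latter is the genuinely delicate point, as you note, and it works without homogeneity: writing $f_i=F_i(e_1,\dots,e_n)$ and $e_k=G_k(f_1,\dots,f_n)$, the identity $G\circ F=\id$ already forces $\det(\partial F_i/\partial e_k)$ to be a unit of $\IQ[e_1,\dots,e_n]$, hence a nonzero constant (one does not even need to invoke that $F$ is a two-sided automorphism). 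One is also implicitly using characteristic zero twice --- in the Jacobian criterion for algebraic independence and in the divisibility argument for anti-invariants --- which is fine here since the ground field is $\IQ$. The Hilbert-series remark at the end is indeed only a consistency check (and somewhat circular, since that identity is the graded shadow of the theorem itself), but it is clearly flagged as an aside and does not affect the proof.
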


In particular,
\[
\IQ[x_1,\ldots,x_n,dx_1,\ldots,dx_n]^{\Sgp_n} = \IQ[e_1,\ldots,e_n,de_1,\ldots,de_n],
\]
where $e_i=e_i(x_1,\ldots,x_n)$ is the $i$-th elementary symmetric polynomial, and $de_i \in \IQ[\bfx,\bfdx]$ is to be interpreted in the usual manner for functions:
\[
df :=\sum_{i=1}^n \frac{\partial f}{\partial x_i}dx_i  \ \ \ \ \ \ \ \ \ \ \ \ \ \ \forall f\in \IQ[\bfx].
\]
Note that $\deg(e_i) = (i,0)$ and $\deg(de_i) = (i-1,1)$.

\Omit{
\begin{remark}
We have $de_1 = \sum_i dx_i$ and, more generally,
\[
de_i(x_1,\ldots,x_n) = \sum_{j=1}^n e_{i-1}(x_1,\ldots,\hat{x_i},\ldots,x_n) dx_i.
\]
\end{remark}
}
\begin{remark}
The mapping $f\mapsto df$ extends to a degree $(-1,1)$ differential $\IQ[\bfx,\bfdx]\rightarrow \IQ[\bfx,\bfdx]$.  This is the usual exterior derivative on polynomial differential forms.
\end{remark}

\subsection{Action of the extended nilHecke algebra}
Taking a cue from higher representation theory, we would like to consider divided difference operators $\partial_i$ acting on superpolynomials.  Unlike in the case of ordinary polynomials, here it is necessary to introduce rational functions in the variables $x_1,\ldots,x_n$.  So, let $\alpha_i:=x_i-x_{i+1}$ for $i=1,\ldots,n-1$, let $\bfal\inv=\{\a_1\inv,\ldots,\a_{n-1}\inv\}$, and consider the algebra $\IQ[\bfx,\bfdx,\bfal\inv]$.  Note that this algebra is bigraded, with $\deg((x_i-x_{i+1})\inv) = (-1,0)$.

We have the divided difference operators $\partial_i : \IQ[\bfx,\bfdx,\bfal\inv]\rightarrow \IQ[\bfx,\bfdx,\bfal\inv]$ defined in the usual way
\[
\partial_i = \frac{1-s_i}{x_i-x_{i+1}}.
\]
It follows from Solomon's theorem that for any tuple $\bff=\{f_1,\dots, f_n\}$ of algebraically independent
generators of $\IQ[\bfx]^{\Sgp_n}$ the subalgebra
$\IQ[\bfx,\bfdf]\subset \IQ[\bfx,\bfdx,\bfal\inv]$
is closed under the action of the divided difference operators.

Consequently, $\IQ[\bfx,\bfdf]$ is a module over the extended nilHecke algebra.  We wish to compare this module with the polynomial representation of the extended nilHecke algebra considered earlier.  This representation can be described as follows.  Let $\bfw =\{ \omega_1,\ldots,\omega_n\}$ be a set of
formal odd variables, with bidegree
\[
\deg(\omega_i)= (n-i,1).
\]
The superpolynomial ring $\IQ[\bfx,\bfw]$ admits an $\Sgp_n$ action via $w(x_i)=x_{w(i)}$ for all $w\in \Sgp_n$, together with
\[
s_j(\omega_i) = \begin{cases}
\omega_i+(x_i-x_{i+1})\omega_{i+1} & \text{ if $j=i$,}\\
\omega_i & \text{ otherwise.}
\end{cases}
\]
Note that the $\Sgp_n$ action preserves the bidegree. The actions of $\IQ[\bfx]$ and $\IQ[\Sgp_n]$
determines uniquely that of $\extNH{n}$.

Note that the graded dimensions of $\IQ[\bfx,\bfw]$ and $\IQ[\bfx,\bfdf]$ coincide.  Thus, it is natural to hope for a bidegree preserving isomorphism of $\extNH{n}$-modules $\IQ[\bfx,\bfw]\cong \IQ[\bfx,\bfdf]$.   Note that equivariance with respect to the $\extNH{n}$ action is equivalent to linearity with respect to $\IQ[\bfx]$, together with equivariance with respect to $\Sgp_n$.

\subsection{Preliminary computations}
\nc\col[1]{\gamma_{#1}}
\nc\row[1]{\rho_{#1}}
\nc\Mat{\mathsf{Mat}}

We say that a tuple $\bfp=\{p_1,\dots, p_n\}\subset\IQ[\bfx]$ is \emph{admissible} if $p_j\in\IQ[\bfx]^{\Sgp_{n-1}\times\Sgp_1}$, $\deg(p_j)=n-j$,
and $\dd{c[j]}p_j\in\IQ^{\times}$ for any $j=1,\dots, n$, where $c[j]=s_j\cdot s_{j+1}\cdots s_{n-1}$ and $c[n]=\id$.
This implies, in particular, that the matrix $\sfP=[\dd{c[j]}p_i]_{1\leq i,j\leq n}\in\mathsf{Mat}(n,\IQ[\bfx])$
is upper triangular and invertible.

We introduce the following operators. For any ring $R$ and any $k=1,\dots, n-1$, let
$\col{k}$ be the linear operator on $\Mat(m\times n, R)$ defined by
\[
\col{k}(A)_{ij}=\delta_{j,k+1}A_{ik}
\]
and let $\row{k}$ be the linear operator on $\Mat(n\times m, R)$ defined by
\footnote{In other words, $\col{k}$ gives back the $k$th column of $A$ in $(k+1)$th position,
while $\row{k}$  gives back the $(k+1)$th row of $A$ in position $k$.}
\[
\row{k}(A)_{ij}=\delta_{i,k}A_{k+1,j}.
\]
The following lemma gives a characterization of admissible tuples in terms of
the corresponding matrices, obtained through divided difference operators.
\begin{lemma}\label{lem:adm}\hfill
\begin{enumerate}[(i)]
\item
If $\bfp=\{p_1,\dots, p_n\}$ is an admissible tuple, then $\sfP$ satisfies
$\dd{k}(\sfP)=\col{k}(\sfP)$
for any $k=1,\dots, n-1$.
\footnote{The action of the divided difference operator is defined entrywise.}
\item
For any invertible $\sfQ=[q_{ij}]\in\mathsf{Mat}(n,\IQ[\bfx])$ such that
$\dd{k}(\sfQ)=\col{k}(\sfQ)$ for $k=1,\dots, n-1$, and $\deg(q_{ij})=j-i$,
the tuple $\bfq=\{q_{1n},\dots, q_{nn}\}$ is admissible and $\sfQ_{ij}=\dd{c[j]}q_{in}$.
\end{enumerate}
\end{lemma}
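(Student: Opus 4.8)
The plan is to handle the two parts separately; in both, everything reduces to elementary facts about divided difference operators: $\dd{k}^2=0$, the braid relation $\dd{k}\dd{k-1}\dd{k}=\dd{k-1}\dd{k}\dd{k-1}$, the commutation $\dd{k}\dd{l}=\dd{l}\dd{k}$ for $|k-l|\geq 2$, and the identity $\bigcap_{l=1}^{n-2}\mathsf{ker}\,\dd{l}=\IQ[\bfx]^{\Sgp_{n-1}\times\Sgp_1}$, which is the $(n-1)$-variable case of the fact $\symL{m}=\bigcap_i\mathsf{ker}\,\dd{i}$ recalled in Section~1. I would first record the observation already noted before the lemma, that $\sfP$ is upper triangular and invertible: since $\dd{c[j]}$ lowers polynomial degree by $\ell(c[j])=n-j$ while $\deg p_i=n-i$, the entry $\sfP_{ij}=\dd{c[j]}p_i$ has degree $j-i$, hence vanishes for $i>j$, and $\sfP_{jj}=\dd{c[j]}p_j\in\IQ^{\times}$ by admissibility. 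The same degree count also underlies the upper triangularity of $\sfQ$ needed in part (ii).

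For part (i), with $k$ fixed, I must verify $\dd{k}\bigl(\dd{c[j]}p_i\bigr)=\delta_{j,k+1}\,\dd{c[k]}p_i$ for all $i,j$, and I would do so by a short case analysis on the position of $k$ relative to $j$, using that $c[j]=s_js_{j+1}\cdots s_{n-1}$ is reduced. If $j=k+1$ then $\dd{k}\dd{c[k+1]}=\dd{k}\dd{k+1}\cdots\dd{n-1}=\dd{c[k]}$, which is the right-hand side. If $j=k$ then $\dd{k}\dd{c[k]}$ begins with $\dd{k}^2=0$. If $j\geq k+2$ then $\dd{k}$ commutes past every factor of $\dd{c[j]}$, so $\dd{k}\dd{c[j]}p_i=\dd{c[j]}\dd{k}p_i=0$ because $k\leq n-2$ and $p_i$ is symmetric in $x_1,\dots,x_{n-1}$. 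Finally, if $j\leq k-1$, I would push $\dd{k}$ rightward through the block $\dd{c[j,k]}=\dd{j}\cdots\dd{k-1}$ (where $c[j,k]=s_j\cdots s_{k-1}$), invoke the braid relation, and push the resulting $\dd{k-1}$ back to the right past $\dd{c[k]}=\dd{k}\cdots\dd{n-1}$, obtaining the identity $\dd{k}\dd{c[j]}=\dd{c[j]}\dd{k-1}$; hence $\dd{k}\dd{c[j]}p_i=\dd{c[j]}\dd{k-1}p_i=0$ once more by symmetry of $p_i$, since $k-1\leq n-2$. In every case except $j=k+1$ the right-hand side vanishes as well, so the desired identity holds.

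For part (ii), the hypothesis $\dd{k}(\sfQ)=\col{k}(\sfQ)$ says precisely that $\dd{k}q_{i,k+1}=q_{ik}$ and $\dd{k}q_{ij}=0$ for $j\neq k+1$. Iterating the first relation downward in the second index gives $q_{ij}=\dd{j}\dd{j+1}\cdots\dd{n-1}q_{in}=\dd{c[j]}q_{in}$, which is the last assertion of the lemma. Specializing the second relation to $j=n$ and $k=1,\dots,n-2$ gives $q_{in}\in\bigcap_{l=1}^{n-2}\mathsf{ker}\,\dd{l}=\IQ[\bfx]^{\Sgp_{n-1}\times\Sgp_1}$, while $\deg q_{jn}=n-j$ is immediate from the degree hypothesis; and the same hypothesis forces $\sfQ$ to be upper triangular with $q_{jj}\in\IQ$, so invertibility of $\sfQ$ over $\IQ[\bfx]$ forces $\det\sfQ=\prod_jq_{jj}\in\IQ[\bfx]^{\times}=\IQ^{\times}$, whence each $q_{jj}=\dd{c[j]}q_{jn}$ is a nonzero scalar. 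Together these say exactly that $\bfq=\{q_{1n},\dots,q_{nn}\}$ is admissible.

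I expect the only genuinely non-mechanical step to be the case $j\leq k-1$ of part (i): it rests on the braid/commutation identity $\dd{k}\dd{c[j]}=\dd{c[j]}\dd{k-1}$ --- equivalently on the descent structure of $c[j]$ recorded in Example~\ref{example:lehmer} --- together with the symmetry of the $p_i$ in $x_1,\dots,x_{n-1}$. Everything else (the degree and triangularity bookkeeping, the determinant argument in part (ii), and the trivial cases $j=k,\,k+1$) is routine.
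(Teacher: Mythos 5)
Your proof is correct and follows essentially the same route as the paper: part (ii) is argued identically (reading off $\dd{k}q_{in}=0$, iterating $q_{ij}=\dd{c[j]}q_{in}$, and using the degree hypothesis plus invertibility to get $\dd{c[j]}q_{jn}\in\IQ^{\times}$), while for part (i) you simply spell out, via the braid and commutation relations, the identity $\dd{k}\dd{c[j]}p_i=\delta_{j,k+1}\dd{c[k]}p_i$ that the paper asserts follows immediately from $p_i\in\IQ[\bfx]^{\Sgp_{n-1}\times\Sgp_1}$.
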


\begin{proof}
$(i)$ follows immediately from the fact that $p_i\in\IQ[\bfx]^{\Sgp_{n-1}\times\Sgp_1}$ and therefore
\[
\dd{k}\dd{c[j]}p_i=\delta_{j,k+1}\dd{c[k]}p_i
\]
Let now $\sfQ$ be a solution of $\dd{k}(\sfQ)=\col{k}(\sfQ)$. Then, for any $k=1,\dots, n-2$,
$\dd{k}q_{in}=0$ and $q_{in}\in\IQ[\bfx]^{\Sgp_{n-1}\times\Sgp_1}$, $i=1,\dots, n$.
Moreover,
\[
q_{i,k}=\dd{k}q_{i,k+1}=\cdots=\dd{k}\dd{k+1}\cdots\dd{n-1}q_{in}=\dd{c[k]}q_{in}
\]
Finally, since $\deg(q_{ij})=j-i$ and $\sfQ$ is invertible, it follows that $\dd{c[j]}q_{jn}\in\IQ^{\times}$.
Therefore $\bfq=\{q_{1n},\dots, q_{nn}\}$ is admissible. This proves $(ii)$.
\end{proof}

We now consider the following situation. Let
$\Theta=\{\theta_1,\dots, \theta_n\}, \Xi=\{\xi_1,\dots,\xi_n\}$
be two sets of algebraically independent elements in $\IQ[\bfx,\bfdx]$
such that $\deg(\theta_i)=(n-i,1)=\deg(\xi_i)$, $i=1,\dots, n$, and
let $\sfP\in\Mat(n,\IQ[\bfx])$ be the invertible matrix defined by the relation
\begin{equation}\label{eq:sol-2}
\Xi=\sfP\Theta
\end{equation}
Note that, necessarily, $\deg(p_{ij})=j-i$.

\begin{lemma}\label{lem:triple}
Any two of these equations imply the third:
\begin{enumerate}[(a)]
\item $\dd{k}(P)=\col{k}(P)$;
\item $\dd{k}(\Xi)=0$;
\item $\dd{k}(\Theta)=-\row{k}(\Theta)$.
\end{enumerate}
\end{lemma}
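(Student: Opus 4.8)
The plan is to fix $k\in\{1,\dots,n-1\}$, apply the divided difference operator $\dd{k}$ to the defining relation $\Xi=\sfP\Theta$ (viewed as an identity of column vectors), and then reorganize the outcome using two elementary facts. First, let $E_k\in\Mat(n,\IQ)$ be the matrix unit with a single $1$ in position $(k,k+1)$; then $E_k^2=0$, the matrices $I\pm(x_k-x_{k+1})E_k$ are mutually inverse, and reading off the definitions of $\col{k}$ and $\row{k}$ gives the dictionary $\col{k}(\sfP)=\sfP E_k$ and $\row{k}(\Theta)=E_k\Theta$. Second, since the entries $p_{ij}$ of $\sfP$ are ordinary polynomials, the Leibniz rule $\dd{k}(fg)=\dd{k}(f)g+s_k(f)\dd{k}(g)$ applies termwise to $\xi_i=\sum_j p_{ij}\theta_j$; collecting these into vector form produces the master identity
\[
\dd{k}(\Xi)=\dd{k}(\sfP)\,\Theta+s_k(\sfP)\,\dd{k}(\Theta),
\]
with $\dd{k}$ and $s_k$ acting entrywise on $\sfP$. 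Together with the relation $s_k=\id-(x_k-x_{k+1})\dd{k}$ on $\IQ[\bfx]$, i.e. $s_k(\sfP)=\sfP-(x_k-x_{k+1})\dd{k}(\sfP)$, this reduces the lemma to formal matrix algebra.

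Concretely I would run the three implications as follows. For $(a)\wedge(c)\Rightarrow(b)$: substitute $\dd{k}(\sfP)=\sfP E_k$, $\dd{k}(\Theta)=-E_k\Theta$, and $s_k(\sfP)=\sfP\bigl(I-(x_k-x_{k+1})E_k\bigr)$ into the master identity; the two $\sfP E_k\Theta$ contributions cancel, leaving $(x_k-x_{k+1})\sfP E_k^2\Theta=0$. For $(a)\wedge(b)\Rightarrow(c)$: now $s_k(\sfP)=\sfP\bigl(I-(x_k-x_{k+1})E_k\bigr)$ is invertible with inverse $\bigl(I+(x_k-x_{k+1})E_k\bigr)\sfP\inv$, so solving the master identity for $\dd{k}(\Theta)$ and using $\sfP\inv\sfP E_k=E_k$ and $E_k^2=0$ gives $\dd{k}(\Theta)=-E_k\Theta=-\row{k}(\Theta)$. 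For $(b)\wedge(c)\Rightarrow(a)$: the master identity collapses to $\bigl(\dd{k}(\sfP)-s_k(\sfP)E_k\bigr)\Theta=0$; the entries of that matrix lie in $\IQ[\bfx]$ and the $\theta_j$ are linearly independent over $\IQ[\bfx]$, so $\dd{k}(\sfP)=s_k(\sfP)E_k=\sfP E_k-(x_k-x_{k+1})\dd{k}(\sfP)E_k$, whence $\dd{k}(\sfP)\bigl(I+(x_k-x_{k+1})E_k\bigr)=\sfP E_k$, and multiplying on the right by $I-(x_k-x_{k+1})E_k$ and invoking $E_k^2=0$ yields $\dd{k}(\sfP)=\sfP E_k=\col{k}(\sfP)$. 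Since $k$ was arbitrary, all three equivalences follow.

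I do not expect a serious obstacle: once the master identity and the dictionary $\col{k}(\sfP)=\sfP E_k$, $\row{k}(\Theta)=E_k\Theta$ are in hand, each case is a two-line computation resting only on $E_k^2=0$ and the invertibility of $\sfP$. The single point that warrants care is the cancellation of $\Theta$ in $(b)\wedge(c)\Rightarrow(a)$: it uses that $\theta_1,\dots,\theta_n$, being of $\bfdx$-degree $1$, are $\IQ[\bfx]$-linearly independent, which is precisely what algebraic independence of $\Theta$ means here — equivalently, that the coefficient matrix expressing $\Theta$ in the $dx_i$ has nonzero determinant, which is automatic given that $\sfP$ is invertible and $\Xi$ is itself algebraically independent.
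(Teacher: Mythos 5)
Your proof is correct and follows essentially the same route as the paper: both apply the twisted Leibniz rule to $\Xi=\sfP\Theta$ to obtain $\dd{k}(\Xi)=\dd{k}(\sfP)\Theta+s_k(\sfP)\dd{k}(\Theta)$, then deduce each implication from the invertibility of $s_k(\sfP)$ and the $\IQ[\bfx]$-linear independence of $\theta_1,\dots,\theta_n$. Your matrix-unit dictionary $\col{k}(\sfP)=\sfP E_k$, $\row{k}(\Theta)=E_k\Theta$ with $E_k^2=0$ is only a notational repackaging of the paper's use of the $s_k$-symmetry of the relevant column, so the arguments coincide in substance.
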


\Omit{
\begin{lemma}\hfill
\begin{enumerate}
\item
If $\dd{k}(\sfP)=\col{k}(\sfP)$, then
\[
\dd{k}\Xi=0\iff\dd{k}\Theta=-\row{k}(\Theta).
\]
\item If $\dd{k}(\Xi)=0$ and $\dd{k}(\Theta)=-\row{k}(\Theta)$, then
\[
$\dd{k}(\sfP)=\col{k}(\sfP)$
\]
\end{enumerate}
\end{lemma}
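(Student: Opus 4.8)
The plan is to unwind the relation $\Xi=\sfP\Theta$ entrywise, writing $\xi_i=\sum_j p_{ij}\theta_j$ and applying $\dd{k}$ via the twisted Leibniz rule $\dd{k}(fg)=\dd{k}(f)g+s_k(f)\dd{k}(g)$. This is valid in any superalgebra possessing an automorphism $s_k$ fixing the central non-zero-divisor $x_k-x_{k+1}$, and there are no Koszul signs since $\dd{k}$ has even bidegree $(-1,0)$. Substituting the identity $s_k(f)=f-(x_k-x_{k+1})\dd{k}(f)$ (from $1-s_k=(x_k-x_{k+1})\dd{k}$) gives the master formula
\[
\dd{k}(\xi_i)=\sum_j\dd{k}(p_{ij})\,\theta_j+\sum_j p_{ij}\,\dd{k}(\theta_j)-(x_k-x_{k+1})\sum_j\dd{k}(p_{ij})\,\dd{k}(\theta_j)
\]
for each $i$. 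Two structural facts drive the argument: first, $\sfP$ is invertible over $\IQ[\bfx]$ (from $\deg(p_{ij})=j-i$ it is upper triangular with constant diagonal, and invertibility forces the diagonal entries into $\IQ^{\times}$); second, the $\theta_j$, being algebraically independent of bidegree $(n-j,1)$, are in particular $\IQ[\bfx]$-linearly independent inside $\IQ[\bfx,\bfdx,\bfal\inv]$.

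Now I run the three implications off the master formula. \emph{$(a)$ and $(c)$ imply $(b)$:} with $\dd{k}(p_{ij})=\delta_{j,k+1}p_{ik}$ and $\dd{k}(\theta_j)=-\delta_{j,k}\theta_{k+1}$ the last sum vanishes (it needs $j=k+1$ and $j=k$ at once) and the first two sums are $p_{ik}\theta_{k+1}$ and $-p_{ik}\theta_{k+1}$, so $\dd{k}(\xi_i)=0$. \emph{$(a)$ and $(b)$ imply $(c)$:} using $\dd{k}(p_{ij})=\delta_{j,k+1}p_{ik}$, the master formula collapses to $\sum_j p_{ij}\,\dd{k}(\theta_j)=p_{ik}\,c$ for all $i$, where $c:=-\theta_{k+1}+(x_k-x_{k+1})\dd{k}(\theta_{k+1})$ is independent of $i$; this says $\sfP\eta=c\cdot(\text{$k$-th column of }\sfP)$ with $\eta=(\dd{k}\theta_1,\dots,\dd{k}\theta_n)^{\mathsf T}$, so inverting $\sfP$ gives $\dd{k}(\theta_j)=0$ for $j\neq k$ and $\dd{k}(\theta_k)=c$, and feeding $\dd{k}(\theta_{k+1})=0$ back in yields $c=-\theta_{k+1}$, i.e. $\dd{k}(\Theta)=-\row{k}(\Theta)$. \emph{$(b)$ and $(c)$ imply $(a)$:} now $\dd{k}(\theta_j)=-\delta_{j,k}\theta_{k+1}$ and $\dd{k}(\xi_i)=0$ turn the master formula into
\[
0=\sum_{j\neq k+1}\dd{k}(p_{ij})\,\theta_j+\bigl(\dd{k}(p_{i,k+1})-p_{ik}+(x_k-x_{k+1})\dd{k}(p_{ik})\bigr)\theta_{k+1},
\]
and $\IQ[\bfx]$-linear independence of the $\theta_j$ kills each coefficient: $\dd{k}(p_{ij})=0$ for $j\neq k+1$ (in particular $\dd{k}(p_{ik})=0$), whence $\dd{k}(p_{i,k+1})=p_{ik}$. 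This is exactly $\dd{k}(\sfP)=\col{k}(\sfP)$.

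The only point needing care is that $s_k(p_{ij})\neq p_{ij}$ in general, which is why the correction term $(x_k-x_{k+1})\dd{k}(p_{ij})\dd{k}(\theta_j)$ appears in the master formula; in each of the three cases one of the two factors is forced to vanish, so the term drops out and nothing delicate survives — the computation is essentially bookkeeping. The only conceptual input is the $\IQ[\bfx]$-linear independence of $\{\theta_j\}$, used in the implication $(b),(c)\Rightarrow(a)$; this is the one place where the hypotheses on $\Theta$ itself (rather than just on $\sfP$) are needed, and it follows from algebraic independence together with the bidegree constraint $\deg(\theta_j)=(n-j,1)$, which is compatible with a free $\IQ[\bfx]$-module structure. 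One can alternatively phrase the whole lemma in the matrix language of \eqref{eq:sol-2} and Lemma~\ref{lem:adm}, but the entrywise derivation above is the most direct route.
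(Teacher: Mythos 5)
Your proposal is correct and is essentially the paper's argument: both rest on the twisted Leibniz rule applied to $\Xi=\sfP\Theta$, the invertibility of $\sfP$ over $\IQ[x_1,\ldots,x_n]$, and the $\IQ[x_1,\ldots,x_n]$-linear independence of the $\theta_j$. The only difference is presentational — you work entrywise and eliminate $s_k$ via $s_k=\id-(x_k-x_{k+1})\dd{k}$, handling the forward implication of part (1) by the self-consistency argument for $c$, whereas the paper stays in matrix form and uses $\col{k}(\sfP)\Theta=s_k(\sfP)\row{k}(\Theta)$ to factor $\dd{k}(\Xi)=s_k(\sfP)\lp\row{k}(\Theta)+\dd{k}(\Theta)\rp$, which yields that equivalence in one stroke.
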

}

\begin{proof}
We first show that, if $\dd{k}(\sfP)=\col{k}(\sfP)$, then
\begin{equation}\label{eq:sol-1}
\dd{k}(\Xi)=0\quad\iff\quad\dd{k}\Theta=-\row{k}(\Theta).
\end{equation}
One easily checks that, since
\[
s_k(\col{k}(\sfP))=s_k(\dd{k}(\sfP))=\dd{k}(\sfP)=\col{k}(\sfP),
\]
it follows $\col{k}(\sfP)\Theta=s_k(\sfP)\row{k}(\Theta)$.
Now, from \eqref{eq:sol-2}, one gets
\begin{align*}
\dd{k}(\Xi)=&\dd{k}(\sfP)\Theta+s_k(\sfP)\dd{k}(\Theta)=\\
=&\col{k}(\sfP)\Theta+s_k(\sfP)\dd{k}(\Theta)=\\
=&s_k(\sfP)\left(\row{k}(\Theta)+\dd{k}(\Theta)\right)
\end{align*}
Therefore, \eqref{eq:sol-2} follows from the invertibility of $\sfP$.

It remains to show that, if $\dd{k}(\Xi)=0$ and $\dd{k}(\Theta)=-\row{k}(\Theta)$, then
$\dd{k}(\sfP)=\col{k}(\sfP)$. From \eqref{eq:sol-1},
\[
0=\dd{k}(\sfP)\Theta+s_k(\sfP)\dd{k}(\Theta)=\dd{k}(\sfP)\Theta-s_k(\sfP)\row{k}(\Theta)
\]
Denote by $P_1,\dots, P_n$ the column vectors of $\sfP$.
Since the component of $\Theta=\{\theta_1,\dots, \theta_n\}$ are algebraically independent
over $\IQ[\bfx]$, the equation $\dd{k}(\sfP)\Theta=s_k(\sfP)\row{k}(\Theta)$ implies
\[
\dd{k}P_i=\delta_{i,k+1}s_k(P_k)
\]
and therefore $\dd{k}{\sfP}=\col{k}(\sfP)$.
\end{proof}

\subsection{$\extNH{n}$--equivariant isomorphisms}

Let $\bff=\{f_1,\dots, f_n\}$ be a set of algebraically independent generators of $\IQ[\bfx]^{\Sgp_n}$,
with $\deg(f_i)=n-i$, $\bfp=\{p_1,\dots, p_n\}\subset\IQ[\bfx]$ an admissible tuple
and set $\sfP=[\dd{c[j]}p_i]_{i,j=1,\dots, n}$.

\begin{proposition}
For any choice of $\bff$ and $\bfp$, there is a unique $\IQ[\bfx]$--linear algebra homomorphism
\[
\sfJ_{\bfp}^{\bff}:\IQ[\bfx,\bfw]\to\IQ[\bfx,\bfdx,\alpha\inv]
\]
defined by the relation $\bfdf=\sfP\cdot\sfJ_{\bfp}^{\bff}(\bfw)$.
Moreover, $\sfJ_{\bfp}^{\bff}$ is injective, $\extNH{n}$--equivariant, and degree preserving.
\end{proposition}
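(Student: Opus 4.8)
The plan is to read $\sfJ_{\bfp}^{\bff}$ directly off the defining relation, and then deduce the three asserted properties from Lemmas~\ref{lem:adm} and~\ref{lem:triple} — which already contain the analytic content — together with the Jacobian criterion for algebraic independence. Since $\bfp$ is admissible, $\sfP$ is upper triangular with diagonal entries $\dd{c[i]}p_i\in\IQ^{\times}$, so $\det\sfP\in\IQ^{\times}$ and $\sfP$ is invertible over $\IQ[\bfx]$; the relation $\bfdf=\sfP\cdot\sfJ_{\bfp}^{\bff}(\bfw)$ is therefore equivalent to $\sfJ_{\bfp}^{\bff}(\omega_j)=\sum_i(\sfP\inv)_{ji}\,df_i$, and I would take this as the definition. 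Each of these elements is $\IQ[\bfx]$-linear in $dx_1,\dots,dx_n$, hence the $\sfJ_{\bfp}^{\bff}(\omega_j)$ anti-commute and square to zero automatically; the assignment thus extends uniquely to a $\IQ[\bfx]$-linear algebra homomorphism on $\IQ[\bfx,\bfw]=\IQ[\bfx]\ten\Wedge{\bfw}$, uniqueness being forced since the $\omega_j$ generate this ring over $\IQ[\bfx]$. That $\sfJ_{\bfp}^{\bff}$ preserves the bigrading is a direct count using $\deg(\sfP_{ij})=(j-i,0)$ and $\deg(df_i)=(n-i,1)=\deg(\omega_i)$.

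For equivariance I would use the remark preceding the proposition: $\extNH{n}$-equivariance is equivalent to $\IQ[\bfx]$-linearity (already in hand) together with $\Sgp_n$-equivariance, and the latter need only be checked on the generators $\omega_1,\dots,\omega_n$ and for the simple transpositions $s_k$. Write $\Theta$ for the column vector with entries $\sfJ_{\bfp}^{\bff}(\omega_1),\dots,\sfJ_{\bfp}^{\bff}(\omega_n)$. Using $s_k(\omega_k)=\omega_k+(x_k-x_{k+1})\omega_{k+1}$, $s_k(\omega_j)=\omega_j$ for $j\neq k$, and $\IQ[\bfx]$-linearity of $\sfJ_{\bfp}^{\bff}$, a short manipulation of $\dd{k}=(1-s_k)/(x_k-x_{k+1})$ shows that $\Sgp_n$-equivariance of $\sfJ_{\bfp}^{\bff}$ is \emph{equivalent} to the identities $\dd{k}(\Theta)=-\row{k}(\Theta)$ for $k=1,\dots,n-1$ — that is, to condition $(c)$ of Lemma~\ref{lem:triple} for $\Xi:=\bfdf$, $\Theta$, and the matrix $\sfP$ (these satisfy $\Xi=\sfP\Theta$ by construction, have the required degrees, and are algebraically independent because the Jacobian of $\bff$ is nonzero). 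Now condition $(a)$, $\dd{k}(\sfP)=\col{k}(\sfP)$, is exactly Lemma~\ref{lem:adm}$(i)$, valid since $\bfp$ is admissible; and condition $(b)$, $\dd{k}(\bfdf)=0$, holds because each $f_i$ is $\Sgp_n$-invariant, so $s_k(df_i)=d(s_k f_i)=df_i$ and hence $\dd{k}df_i=0$. Lemma~\ref{lem:triple} then delivers $(c)$, and with it the required $\Sgp_n$-, hence $\extNH{n}$-, equivariance.

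For injectivity I would extend scalars to the fraction field. As $\IQ[\bfx,\bfw]$ is $\IQ[\bfx]$-free and localization is exact, $\sfJ_{\bfp}^{\bff}$ is injective iff it remains so after tensoring with $\IQ(\bfx)$ over $\IQ[\bfx]$; over $\IQ(\bfx)$ the source and target both become vector spaces of dimension $2^n$, so injectivity is equivalent to surjectivity. The image is the $\IQ(\bfx)$-subalgebra generated by the elements $\sfJ_{\bfp}^{\bff}(\omega_j)=\sum_i(\sfP\inv)_{ji}df_i$; since $\sfP\inv$ is invertible over $\IQ(\bfx)$, this is the same as the subalgebra generated by $df_1,\dots,df_n$, and that subalgebra fills the whole ring precisely when $df_1,\dots,df_n$ span the degree-$(0,1)$ part, i.e. when $\det(\partial f_i/\partial x_l)\neq0$ — which in characteristic zero is exactly the hypothesis that $\bff$ is algebraically independent.

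The genuinely load-bearing steps are the observation that $\Sgp_n$-equivariance of $\sfJ_{\bfp}^{\bff}$ is \emph{verbatim} condition $(c)$ of Lemma~\ref{lem:triple}, so that equivariance becomes a formal consequence of admissibility of $\bfp$ and $\Sgp_n$-invariance of $\bff$, and the reduction of injectivity to nonvanishing of the Jacobian of $\bff$; the remaining points — invertibility of $\sfP$, the automatic anti-commutation of the $\sfJ_{\bfp}^{\bff}(\omega_j)$, and the bidegree count — are routine.
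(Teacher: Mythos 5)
Your proposal is correct and follows essentially the same route as the paper: existence and uniqueness from the invertibility of $\sfP$ (admissibility) together with $\IQ[\bfx]$--linearity, $\Sgp_n$-- and hence $\extNH{n}$--equivariance by feeding Lemma~\ref{lem:adm}$(i)$ and $\dd{k}(\bfdf)=0$ into Lemma~\ref{lem:triple}, and injectivity from invertibility of $\sfP$ plus algebraic independence of the $f_i$. The only difference is that you spell out details the paper leaves implicit (automatic anticommutativity of the images, the localization/Jacobian-criterion argument for injectivity), which is fine.
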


\begin{proof}
Since $\bfp$ is admissible, the matrix $\sfP$ is invertible and
the algebra homomorphism $\sfJ_{\bfp}^{\bff}$ is uniquely determined by
the condition $\bfdf=\sfP\sfJ_{\bfp}^{\bff}(\bfw)$ and linearity in $\IQ[\bfx]$.

The injectivity of $\sfJ_{\bfp}^{\bff}$ follows from the invertibility of $\sfP$ and the
algebraic independence of the elements $\bff=\{f_1,\dots, f_n\}$ and $\bfdf=\{df_1,\dots, df_n\}$.

The $\Sgp_n$--equivariance follows from $\IQ[\bfx]$--linearity and Lemmas \ref{lem:adm}, \ref{lem:triple}.
Namely, since $\bfp$ is admissible, it follows from Lemma~\ref{lem:adm} that $\dd{k}(\sfP)=\col{k}(\sfP)$.
Then, since $\bfdf=\sfP\sfJ_{\bfp}^{\bff}(\bfw)$ and $\dd{k}(\bfdf)=0$, it follows from
Lemma~\ref{lem:triple} that $\dd{k}(\sfJ_{\bfp}^{\bff}(\bfw))=-\row{k}(\sfJ_{\bfp}^{\bff}(\bfw))$,
which is equivalent to
\[
s_i(\sfJ_{\bfp}^{\bff}(\wv{j}))=\sfJ_{\bfp}^{\bff}(\wv{j})+\delta_{ij}(x_i-x_{i+1})\sfJ_{\bfp}^{\bff}(\wv{i+1})
\]
and implies the $\Sgp_n$--equivariance of $\sfJ_{\bfp}^{\bff}$. The $\extNH{n}$--equivariance follows.
Finally, the fact that $\sfJ_{\bfp}^{\bff}$ preserves the degree is
a straightforward check.
\end{proof}

The construction of the homomorphism $\sfJ_{\bfp}^{\bff}$ allows us to compare the
description of the $\Sgp_n$--invariants in $\IQ[\bfx,\bfw]$ from Theorem \ref{thm:ext-sym}
and that of the $\Sgp_n$--invariants in $\IQ[\bfx,\bfdx]$  from Solomon's Theorem.
We obtain the following

\begin{corollary}
The homomorphisms $\sfJ_{\bfp}^{\bff}$ restricts to a canonical identification of
$\Sgp_n$--invariants. More specifically, there is a commutative
diagram
\[
\xymatrix{
\IQ[\bfx,\bfw]\ar[d]_{\sfJ_{\bfp}^{\bff}} & \ar[l]_{\beta_{\sfP}} \IQ[\bfx,\bfw_{\bfp}] \ar@{=}[d] &
\ar@{_(->}[l] \ar@{=}[d] \IQ[\bfx,\bfw_{\bfp}]^{\Sgp_n} & \ar@{=}[d] \ar@{=}[l] \IQ[\bff,\bfw_{\bfp}]\\
\IQ[\bfx,\bfdx,\alpha\inv] & \IQ[\bfx,\bfdf] \ar@{_(->}[l] & \IQ[\bfx,\bfdf]^{\Sgp_n} \ar@{_(->}[l]& \ar@{=}[l] \IQ[\bff,\bfdf]\\
}
\]
where $\beta_{\sfP}$ denotes the change of $\IQ[\bfx]$--basis defined by $\bfw_{\bfp}=\sfP\bfw$
and the vertical arrows send $\bfw_{\bfp}$ to $\bfdf$.
\end{corollary}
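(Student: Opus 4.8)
The plan is to treat this corollary as a bookkeeping consequence of the preceding Proposition together with Theorem~\ref{thm:ext-sym} and Solomon's Theorem~\ref{thm:solomon}. The first thing I would record is that $\sfJ_{\bfp}^{\bff}$ in fact maps $\IQ[\bfx,\bfw]$ into the subalgebra $\IQ[\bfx,\bfdf]$, not merely into $\IQ[\bfx,\bfdx,\bfal\inv]$. Indeed, since $\bfp$ is admissible the matrix $\sfP=[\dd{c[j]}p_i]$ is upper triangular with diagonal entries $\dd{c[j]}p_j\in\IQ^{\times}$, so $\det\sfP\in\IQ^{\times}$ and $\sfP\inv\in\Mat(n,\IQ[\bfx])$; from $\bfdf=\sfP\cdot\sfJ_{\bfp}^{\bff}(\bfw)$ we then get $\sfJ_{\bfp}^{\bff}(\bfw)=\sfP\inv\bfdf$, whose coordinates are $\IQ[\bfx]$-linear combinations of the $df_j$, and $\IQ[\bfx]$-linearity of $\sfJ_{\bfp}^{\bff}$ propagates this to all of $\IQ[\bfx,\bfw]$. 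Note $\IQ[\bfx,\bfdf]\subset\IQ[\bfx,\bfdx]$ since each $df_i=\sum_j(\partial f_i/\partial x_j)dx_j$ has polynomial coefficients.

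Second, I would verify that the left square of the diagram commutes. By definition $\beta_{\sfP}$ is the $\IQ[\bfx]$-algebra map sending the generator $(\bfw_{\bfp})_i$ to $\sum_j[\sfP]_{ij}\wv{j}$; applying $\sfJ_{\bfp}^{\bff}$, using $\IQ[\bfx]$-linearity and the defining relation $\bfdf=\sfP\cdot\sfJ_{\bfp}^{\bff}(\bfw)$, gives $\sfJ_{\bfp}^{\bff}(\beta_{\sfP}((\bfw_{\bfp})_i))=(\sfP\cdot\sfJ_{\bfp}^{\bff}(\bfw))_i=df_i$. Hence $\sfJ_{\bfp}^{\bff}\circ\beta_{\sfP}$ is exactly the inclusion of $\IQ[\bfx]$-algebras $\IQ[\bfx,\bfw_{\bfp}]\hookrightarrow\IQ[\bfx,\bfdf]$ carrying $\bfw_{\bfp}$ to $\bfdf$, which is the middle downward arrow of the diagram; so the left square commutes.

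Third, I would identify the two rows with their subalgebras of $\Sgp_n$-invariants. After rescaling each $p_i$ by the scalar $\dd{c[i]}p_i$ (this alters neither the $\IQ[\bfx]$-span of the components of $\bfw_{\bfp}=\sfP\bfw$ nor the $\Sgp_n$-structure), admissibility makes $\{p_i\}$ a solution of \eqref{eq:basis-sys}, so the components $\swv{i}=\sum_{k\geq i}\dd{c[k]}(p_i)\wv{k}$ of $\bfw_{\bfp}$ form an exterior basis of $\extL{n}$; by Theorem~\ref{thm:ext-sym}(iii) this gives $\IQ[\bfx,\bfw_{\bfp}]^{\Sgp_n}=\symL{n}\ten\Wedge{\swv{1},\dots,\swv{n}}=\IQ[\bff,\bfw_{\bfp}]$. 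For the bottom row, Solomon's Theorem~\ref{thm:solomon} gives $\IQ[\bfx,\bfdx]^{\Sgp_n}=\IQ[\bff,\bfdf]$, and since $\IQ[\bfx,\bfdf]\subset\IQ[\bfx,\bfdx]$ is an $\Sgp_n$-stable subalgebra containing $\IQ[\bff,\bfdf]$ we obtain $\IQ[\bfx,\bfdf]^{\Sgp_n}=\IQ[\bff,\bfdf]$ as well. The remaining horizontal arrows in the diagram are the evident inclusions and equalities, so the two right-hand squares commute tautologically.

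Finally, I would conclude: $\sfJ_{\bfp}^{\bff}\circ\beta_{\sfP}$ fixes each $f_i$ (it is $\IQ[\bfx]$-linear and $f_i\in\IQ[\bfx]$) and sends $(\bfw_{\bfp})_i$ to $df_i$, so it restricts to a ring isomorphism $\IQ[\bff,\bfw_{\bfp}]\simeq\IQ[\bff,\bfdf]$; combined with the two row identifications of the previous paragraph, this exhibits $\sfJ_{\bfp}^{\bff}$ as carrying $\IQ[\bfx,\bfw]^{\Sgp_n}=\extL{n}$ isomorphically onto $\IQ[\bfx,\bfdf]^{\Sgp_n}$, which is the assertion. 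I do not anticipate a genuine obstacle here: the only point requiring care is confirming that an admissible tuple really does produce an exterior basis in the sense of \ref{ss:basis}, but this is immediate since admissibility was defined precisely to reproduce \eqref{eq:basis-sys} up to a unit. All of the substantive content has already been established in the preceding Proposition, in Theorem~\ref{thm:ext-sym}, and in Solomon's Theorem; the corollary is the compatible fitting-together of these three facts.
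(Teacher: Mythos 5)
Your proposal is correct and follows essentially the route the paper intends: the paper states this corollary without a separate proof, treating it as the immediate assembly of the preceding Proposition (equivariance and the relation $\bfdf=\sfP\cdot\sfJ_{\bfp}^{\bff}(\bfw)$), Theorem~\ref{thm:ext-sym} for the top row, and Solomon's Theorem~\ref{thm:solomon} for the bottom row, which is exactly what you carry out. Your added details (invertibility of $\sfP$ over $\IQ[\bfx]$ so the image lands in $\IQ[\bfx,\bfdf]$, the rescaling reducing admissibility to \eqref{eq:basis-sys}, and $\Sgp_n$-stability of $\IQ[\bfx,\bfdf]$) are accurate fillings-in of what the paper leaves implicit.
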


\subsection{Example}
\Omit{
Set $\bfe=\{\ef_n,\dots, \ef_1\}$ and $\bfde=\{d\ef_n,\dots, d\ef_1\}$, where
\[
d\ef_i=\sum_{k=1}^n \ef_i(x_1,\dots,\hat{x}_k,\dots,x_n)dx_k
\]
Then, from Theorem \ref{thm:solomon}, $\IQ[\bfx,\bfdx]^{\Sgp_n}=\IQ[\bfe,\bfde]$.
}
Let $\mathbf{h}=\{p_1,\dots, p_n\}$ be the admissible tuple with $p_j=(-1)^{n-j}\hf_{n-j}(x_n)$, and
let $\mathsf{H}$ be the corresponding matrix. In particular,
\[
\mathsf{H}_{ij}=\dd{c[j]}p_i=(-1)^{j-i}\hf_{j-i}(x_j,\dots, x_n)
\]
It is easy to see that the homomorphism $\sfJ_{\mathbf{h}}^{\bff}$ is defined by
\[
\sfJ_{\mathbf{h}}^{\bff}(\bfw)=\sfQ\bfdf\qquad\mbox{where}\qquad \sfQ_{ij}=\ef_{j-i}(x_{i+1},\dots, x_n)
\]
Similarly, let $\mathbf{e}=\{p_1,\dots, p_n\}$ be the admissible tuple with $p_j=\ef_{n-j}(x_1,\dots, x_{n-1})$, and
let $\mathsf{E}$ be the corresponding matrix. In particular,
\[
\mathsf{E}_{ij}=\dd{c[j]}p_i=\ef_{j-i}(x_1,\dots, x_{j-1})
\]
and the homomorphism $\sfJ_{\mathbf{e}}^{\bff}$ is defined by
\footnote{
Both computations follow easily from the 
relation between the generating series
of elementary and complete functions. More specifically, for $j>i$, one has
\[
\left(\sum_{k\geq0}(-1)^kt^k\hf_k(x_j,\dots, x_n)\right)\left(\sum_{k\geq0}t^k\ef_k(x_{i+1},\dots, x_j,\dots, x_n)\right)=\prod_{l=i+1}^{j-1}(1+tx_l)
\]
In particular, comparing the coefficients of $t^{j-i}$, we get
\[
\sum_{k=i}^j(-1)^{j-k}\hf_{j-k}(x_j,\dots, x_n)\ef_{k-i}(x_{i+1},\dots,x_n)=0
\]
which implies that the entries of $\mathsf{H}^{-1}$ are the polynomials $\ef_{j-i}(x_{i+1},\dots, x_n)$.
Similarly for $\mathsf{E}$.
}
\[
\sfJ_{\mathbf{e}}^{\bff}(\bfw)=\tilde{\sfQ}\bfde\qquad\mbox{where}\qquad \tilde{\sfQ}_{ij}=(-1)^{j-i}\hf_{j-i}(x_{1},\dots, x_i)
\]


\section{Differentials}
In this section we show that the differential $d_N$ on $\extNH{n}$ defined in section~\ref{subsec:diff} restricts to the ring of extended symmetric functions $\extL{n}$.  We identify the resulting DG-algebra as the Koszul complex associated to a certain regular sequence of symmetric polynomials in $\symL{n}$, whose cohomology is isomorphic to the cohomology ring of a Grassmannian.  We also define new deformed differentials $d_N^{\Sigma}$ on $\extNH{n}$ in section~\ref{sec:deformed-diff}.  The deformed differentials also restrict to $\extL{n}$ and the resulting cohomology of $(\extL{n},d_N^{\Sigma})$ is related to $GL(N)$-equivariant cohomology of a Grassmannian.

\subsection{The standard differential}
Recall that $\extNH{n}$ admits a differential $d_N$ for each $N\geq n-1$, defined by
\[
d_N(\omega_i)=(-1)^i h_{N-i+1}(x_1,\ldots,x_i) \hskip.3in d_N(x_i)=0 \hskip.3in d_N(\partial_i)=0
\]
for all $i$, together with the Leibniz rule.  Consequently, $d_N$ is linear with respect to the subalgebra $\NH{n}\subset \ext{N}_n$.  The following states that $\extL{n}$ is a DG-subalgebra of $\extNH{n}$ in a natural way.

\begin{proposition}
The differential $d_N$ restricts to a differential on $\extL{n}\subset \extNH{n}$.
\end{proposition}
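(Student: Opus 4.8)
The plan is to read the statement through the identification of $\extL{n}$ with the graded center $gZ(\extNH{n})$ furnished by Corollary~\ref{cor:ZNH}. Under this identification the Proposition becomes a purely formal assertion: \emph{a degree $-1$ derivation of a $\Z$-graded algebra maps its graded center into itself}. Applying this to the derivation $d_N$ of $\extNH{n}$ and transporting back along $\extL{n}\cong gZ(\extNH{n})$ yields $d_N(\extL{n})\subseteq\extL{n}$, and $d_N^2=0$, $d_N(1)=0$ on $\extL{n}$ come for free. So everything reduces to the formal lemma, which is just a sign computation.

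To prove the lemma, take $z\in gZ(A)$ homogeneous of degree $|z|$ and $a\in A$ homogeneous of degree $|a|$, so $za=(-1)^{|z||a|}az$, the grading being the one in which $d:=d_N$ has degree $-1$. Apply $d$ and the Leibniz rule to both sides of this equality; using that $z$ is central and that $d(a)$ has degree $|a|-1$, rewrite $z\,d(a)$ as $(-1)^{|z|(|a|-1)}d(a)\,z$. The two resulting occurrences of $(-1)^{|z||a|}d(a)\,z$ cancel, leaving $d(z)\,a=(-1)^{(|z|+1)|a|}\,a\,d(z)$; since $(-1)^{(|z|+1)|a|}=(-1)^{(|z|-1)|a|}$ this is exactly the centrality of $d(z)$ in degree $|z|-1$. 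A general central element is a sum of homogeneous ones, so the lemma follows. The only care needed is to keep straight which grading is in play — the cohomological/super grading, in which the $\wv{i}$ are odd, $x_i$ and $\dd{i}$ are even, and $d_N$ has degree $-1$, not the internal $q$-grading.

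A more elementary variant avoids Corollary~\ref{cor:ZNH}. Use instead the description $\extL{n}=(\extP{n})^{\Sgp_n}$ established after \eqref{eq:dd}. First, $d_N$ restricts to a derivation of the subalgebra $\extP{n}\subset\extNH{n}$, because $d_N(x_i)=0$ and $d_N(\wv{i})=(-1)^i\hf_{N-i+1}(x_1,\dots,x_i)$ already lie in $\extP{n}$. It then suffices to show that $d_N$ commutes with every $s_i$ on $\extP{n}$: given that, $v=s_i(v)$ for all $i$ forces $d_N(v)=s_i(d_N(v))$ for all $i$, so $d_N$ preserves $(\extP{n})^{\Sgp_n}=\extL{n}$. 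For the commutation, note that $d_N\circ s_i$ and $s_i\circ d_N$ satisfy the \emph{same} twisted Leibniz identity $D(fg)=D(f)\,s_i(g)+(-1)^{\deg f}s_i(f)\,D(g)$, hence each is determined by its values on the algebra generators $x_j,\wv{j}$; so one need only check $d_Ns_i=s_id_N$ on those. On $x_j$ both sides vanish, and on $\wv{j}$ — expanding with $s_i(\wv{j})=\wv{j}+\delta_{ij}(x_i-x_{i+1})\wv{i+1}$ — the identity collapses to $\hf_m(x_1,\dots,x_i)-\hf_m(x_1,\dots,x_{i-1},x_{i+1})=(x_i-x_{i+1})\hf_{m-1}(x_1,\dots,x_{i+1})$, i.e.\ $\dd{i}\hf_m(x_1,\dots,x_i)=\hf_{m-1}(x_1,\dots,x_{i+1})$, which is immediate from \eqref{eq:ind-dotslide}.

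The only step with any friction is this commutation: in the first approach the sign bookkeeping of the graded-center lemma, in the second the (routine) fact that a twisted derivation is pinned down by its values on algebra generators together with the elementary $\hf$-identity. Both incarnations say the same thing — $d_N$ is $\Sgp_n$-equivariant — and this is already visible on the generators $x_j,\wv{j}$, so I do not expect a genuine obstacle.
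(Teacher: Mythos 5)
Both of your arguments are correct. Your first route is essentially the paper's own proof: the paper characterizes $\extL{n}$ inside $\extNH{n}$ by (super)commutation with the divided difference operators and uses $\NH{n}$-linearity of $d_N$ to get $[\dd{i},d_N(z)]=d_N([\dd{i},z])=0$; your version replaces this by the cleaner formal statement that an odd derivation preserves the graded center, applied via Corollary~\ref{cor:ZNH}. This is, if anything, more robust --- it needs nothing about $d_N$ beyond the Leibniz rule, only that the grading governing the center signs and the Koszul sign is the parity grading in which the $\wv{i}$ are odd (a point you rightly flag), and that Corollary~\ref{cor:ZNH} is realized by the inclusion $\extL{n}\subset\extNH{n}$, which is how it arises from Proposition~\ref{prop:structureOfNH}. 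Your second route is genuinely different from what the paper does at this point: you show $d_N$ preserves the subalgebra $\extP{n}$ and is $\Sgp_n$-equivariant there, so it preserves $\lp\extP{n}\rp^{\Sgp_n}=\extL{n}$; the reduction to generators via the common $s_i$-twisted Leibniz identity is sound, and the generator check boils down to $\dd{i}\hf_m(x_1,\dots,x_i)=\hf_{m-1}(x_1,\dots,x_{i+1})$, which follows from \eqref{eq:ind-dotslide} (or the standard generating-function argument). What this buys is independence from the matrix-algebra/center machinery, at the cost of an explicit symmetric-function identity; it also foreshadows the paper's later explicit computation of $d_N(\dschwv{i})$ in Proposition~\ref{prop:dN-restricts}, which in effect re-proves the restriction statement by exhibiting the Koszul differential on a concrete exterior basis. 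No gaps in either version.
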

\begin{proof}
The subset $\extL{n}=Z(\extNH{n})\subset \extNH{n}$ can be characterized as the set consisting of those elements $z\in \extNH{n}$ such that $[\partial_i,z]=0$ for all divided difference operators $\partial_i\in \extNH{n}$.  On the other hand $d_N$ is $\NH{n}$-linear, so
\[
[\partial_i,d_N(z)]=d_N([\partial_i,z])=0
\]
if $[\partial_i,z]=0$.
\end{proof}

\Omit{
We will show that $(\extL{n},d_N)$ is the center of the DG-algebra $(\extNH{n},d_N)$ for all $N$.
 In particular, we show that $(\extL{n},d_N)$ is a DG-subalgebra of $(\extNH{n},d_N)$.  This amounts to proving that the differential $d_N$ preserves the ring of symmetric functions.  We will show that
\[
d_N \maps \extL{n} \to \Lambda_n,
\]
while sending traditional symmetric functions to zero.}

\begin{example}
Let us consider the differential $d_N$ of $\hwv{j}$.  We will see that $d_N(\hwv{j})$ lands in $\extL{n}$, by direct computation.
Recall from Remark~\eqref{rem:reindex-dual-sch} that for $n=3$
\begin{align*}
\hwv{1} &= \wv{3},\\
\hwv{2} &= \wv{2} -x_3\wv{3},\\
\hwv{3} &= \wv{1} - (x_2+x_3)\wv{2} = {x_3}^2\wv{3}.
\end{align*}
Then the differentials are computed as follows.
\begin{align*}
d_N(\hwv{1}) &= d_N(\wv{3}) = (-1)^3\hf_{N-2}(x_1,x_2,x_3)
 \\
d_N(\hwv{2}) &= d_N(\wv{2} -x_3\wv{3})
              = \hf_{N-1}(x_1,x_2) + x_3 \hf_{N-2}(x_1,x_2,x_3)\\
              &= \hf_{N-1}(x_1,x_2,x_3).
\end{align*}
The last equality comes from the following observation:
\begin{equation*}
\{(a,b,c)| a+b+c= N-1\} = \{(a,b,0)| a+b= N-1\} \cup \{(a,b,c)| a+b+c= N-1, c\geq 1 \}
\end{equation*}
Similarly
\begin{align*}
d_N(\hwv{3}) &= d_N(\wv{1} - (x_2+x_3)\wv{2} + x_3^2\wv{3}) \\
              &= -x_1^N - (x_2+x_3)\hf_{N-1}(x_1,x_2) - x_3^2 \hf_{N-2}(x_1,x_2,x_3)\\
              &= - \hf_{N}(x_1,x_2,x_3).
\end{align*}
Similar to the above argument, the last equality follows from the  observation:
\begin{align*}
\left\{ (a,b,c) \mid a+b+c = N  \right\}
 &=
\left\{ (a,0,0)  \mid a = N  \right\}
\cup    \left\{ (a,b,0)  \mid a + b= N, \; b \geq 1  \right\}   \\
& \qquad
\cup    \left\{(a,b,1) \mid a + b= N-1 \right\}
\cup    \left\{(a,b,c) \mid  a + b + c = N, \;  \text{and } c\geq 2 \right\}.
\end{align*}
\end{example}

Before we compute $d_N(\hwv{j})$ in general, we need the following result on symmetric functions.

\begin{lemma} \label{lem:sym-ident}
Let $\hf_{i}(x_j,\cdots, x_n)$ denote the complete homogeneous symmetric polynomial of degree $i$ in variables $x_j,\cdots, x_n$, for $1\leq j \leq n$. Then for any $1\leq i \leq n$ and $N\in\mathbb{N}$
\begin{equation}
\hf_{N-i+1}(x_1,\cdots,x_n) = \sum_{j=0}^{n-i} \hf_{N-i-j+1}(x_1,\cdots, x_{i+j})\hf_{j}(x_{i+j},\cdots, x_n).
\end{equation}
\end{lemma}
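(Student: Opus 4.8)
The plan is to prove the following slightly more flexible identity, valid for every integer $m$ (with the convention $\hf_p=0$ for $p<0$) and every $1\le i\le n$:
\[
\hf_m(x_1,\dots,x_n)=\sum_{j=0}^{n-i}\hf_{m-j}(x_1,\dots,x_{i+j})\,\hf_j(x_{i+j},\dots,x_n),
\]
and then recover the lemma by setting $m=N-i+1$ (the cases with $N-i+1<0$ being vacuous). I would argue by induction on $r:=n-i$. The case $r=0$ (that is, $i=n$) is trivial, since the sum collapses to its single $j=0$ term $\hf_m(x_1,\dots,x_n)\hf_0(x_n)=\hf_m(x_1,\dots,x_n)$.

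For the inductive step ($r\ge1$) I would apply the inductive hypothesis to the pair $(n,i+1)$, which is legitimate because $n-(i+1)=r-1$; after shifting the summation index by one this reads
\[
\hf_m(x_1,\dots,x_n)=\sum_{j=1}^{r}\hf_{m-j+1}(x_1,\dots,x_{i+j})\,\hf_{j-1}(x_{i+j},\dots,x_n).
\]
It therefore suffices to show that this sum agrees with $\sum_{j=0}^{r}\hf_{m-j}(x_1,\dots,x_{i+j})\hf_j(x_{i+j},\dots,x_n)$; isolating the $j=0$ term of the latter, this is equivalent to the identity $\sum_{j=1}^{r}B_j=\hf_m(x_1,\dots,x_i)$, where
\[
B_j:=\hf_{m-j+1}(x_1,\dots,x_{i+j})\,\hf_{j-1}(x_{i+j},\dots,x_n)-\hf_{m-j}(x_1,\dots,x_{i+j})\,\hf_j(x_{i+j},\dots,x_n).
\]

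The heart of the argument is a telescoping evaluation of $B_j$. I would use the two one-variable recursions for complete symmetric functions — adjoining $x_{i+j}$ at the end of the first alphabet, $\hf_{m-j+1}(x_1,\dots,x_{i+j})=\hf_{m-j+1}(x_1,\dots,x_{i+j-1})+x_{i+j}\,\hf_{m-j}(x_1,\dots,x_{i+j})$, and at the front of the second, $\hf_j(x_{i+j},\dots,x_n)=\hf_j(x_{i+j+1},\dots,x_n)+x_{i+j}\,\hf_{j-1}(x_{i+j},\dots,x_n)$. Substituting both into $B_j$ and expanding, the two contributions carrying the factor $x_{i+j}$ cancel, leaving $B_j=u_j-u_{j+1}$ with $u_j:=\hf_{m-j+1}(x_1,\dots,x_{i+j-1})\,\hf_{j-1}(x_{i+j},\dots,x_n)$. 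Hence $\sum_{j=1}^{r}B_j=u_1-u_{r+1}$, where $u_1=\hf_m(x_1,\dots,x_i)\hf_0(x_{i+1},\dots,x_n)=\hf_m(x_1,\dots,x_i)$, while $u_{r+1}=\hf_{m-r}(x_1,\dots,x_n)\,\hf_r(x_{n+1},\dots,x_n)=0$ because $i+r=n$ forces the last factor to be a positive-degree complete symmetric function in the empty alphabet. This gives $\sum_{j=1}^{r}B_j=\hf_m(x_1,\dots,x_i)$, completing the induction, and then $m=N-i+1$ yields the lemma.

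I expect the only real obstacle to be the clerical one of tracking the precise variable ranges in $B_j$, $u_j$ and $u_{j+1}$ so that the relation $B_j=u_j-u_{j+1}$ comes out cleanly; there is no conceptual difficulty. A generating-function variant is also available — multiplying the identity by $\prod_{l=1}^n(1-x_lt)$ reduces it to $\sum_{j=0}^{n-i}\hf_j(x_{i+j},\dots,x_n)\,t^j\prod_{l=i+j+1}^n(1-x_lt)=1$ — but establishing that still amounts to the same telescoping, so I would favor the direct induction above.
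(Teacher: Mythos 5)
Your proof is correct, but it takes a genuinely different route from the paper's. The paper argues directly on monomials: it expands each product $\hf_{N-i-j+1}(x_1,\dots,x_{i+j})\,\hf_j(x_{i+j},\dots,x_n)$ as a sum of monomials, sorts the resulting exponent vectors by the exponent $k$ contributed to the shared variable $x_{i+j}$, and observes that as $j$ runs over $0\le j\le n-i$ these exponent vectors exhaust, each exactly once, the monomials of $\hf_{N-i+1}(x_1,\dots,x_n)$ --- in effect a bijective bookkeeping argument whose uniqueness claim is asserted rather than spelled out in full. You instead prove the more flexible identity for arbitrary degree $m$ (with $\hf_p=0$ for $p<0$) by induction on $r=n-i$, reducing the inductive step to $\sum_{j=1}^r B_j=\hf_m(x_1,\dots,x_i)$ and evaluating it by telescoping: the two one-variable recursions (adjoining $x_{i+j}$ at the end of the first alphabet and at the front of the second) do cancel the $x_{i+j}$-terms and give $B_j=u_j-u_{j+1}$ with your $u_j$, and the boundary values $u_1=\hf_m(x_1,\dots,x_i)$, $u_{r+1}=0$ (positive-degree complete function in the empty alphabet) are right; the base case and the index shift in applying the hypothesis to $(n,i+1)$ also check out. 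Your approach buys a fully explicit verification with no hidden uniqueness claim, and a slightly more general statement that dispenses with any case analysis on the sign of $N-i+1$; the paper's approach buys brevity and a transparent combinatorial explanation of why the right-hand side counts each monomial of $\hf_{N-i+1}(x_1,\dots,x_n)$ exactly once.
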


\begin{proof}
For any $0\leq j \leq n-i$, $1\leq i \leq n$, and $N\in\mathbb{N}$
\begin{align*}
& \hf_{N-i-j+1}(x_1,\cdots, x_{i+j})\hf_{j}(x_{i+j},\cdots, x_n) \hspace{1.5in}\\
& \quad =  \left(\sum_{b_1+\cdots +b_{i+j} = N-i-j+1} x_1^{b_1} \cdots x_{i+j}^{b_{i+j}}\right)\left(\sum_{a_{i+j}+\cdots +a_n = j} x_{i+j}^{a_{i+j}} \cdots x_{n}^{a_n}\right)\\
& \quad =\sum_{b_1+\cdots +b_{i+j} = N-i-j+1}  \sum_{a_{i+j}+\cdots +a_n = j} x_1^{b_1} \cdots x_{i+j}^{b_{i+j}+a_{i+j}} x_{i+j+1}^{a_{i+j+1}}\cdots x_n^{a_n} \\
& \quad =\sum_{k=0}^{j}\left(\sum_{b_1+\cdots +b_{i+j} = N-i-j+1}\left(\sum_{a_{i+j}+\cdots +a_n = j-k} x_1^{b_1} \cdots x_{i+j}^{b_{i+j}+k} x_{i+j+1}^{a_{i+j+1}}\cdots x_n^{a_n}\right)\right)
\end{align*}
The exponent of each monomial in above sum is an $n-$tuple $(b_1,\cdots,b_{i+j}+k,a_{i+j+1},\cdots,a_n)$
where
\begin{align*}
   b_1 +\cdots + b_{i+j} &= N-i-j+1,\\
a_{i+j+1} + \cdots + a_n &= j-k, \text{ and} \\
                 a_{i+j} &=k \text{ for any } 0\leq k \leq j.
\end{align*}
As $j$ varies in the range $0 \leq j \leq n-i$ these exponents exhaust uniquely all monomials appearing in $\hf_{N-i+1}(x_1,\cdots,x_n)$.

\end{proof}

\subsection{Koszul complex}
Let $R$ be a commutative ring, and let $a_1,\ldots,a_r\in R$ be given elements. The \emph{Koszul complex} associated to $(a_1,\ldots,a_r)$ is the DG algebra
\[
R\otimes \Wedge{\theta_1,\ldots,\theta_r}
\]
with $R$-linear differential uniquely characterized by $d(\theta_i)=a_i$ together with the graded Leibniz rule.  For the purposes of the Leibniz rule, the grading places $R$ in homological degree zero, and each $\theta_i$ in homological degree $-1$.

\begin{proposition} \label{prop:dN-restricts}
As a DG-algebra, $\extL{n}$ is isomorphic to the Koszul complex associated to $(-1)^i h_{N-i+1}\in \symL{n}$ ($1\leq i\leq n$).
\end{proposition}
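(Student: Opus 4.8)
The plan is to reduce the statement to an explicit evaluation of $d_N$ on a well-chosen set of exterior generators of $\extL{n}$, and then recognize the answer as a Koszul differential. I would work with the dual Schubert exterior basis of Remark~\ref{rem:reindex-dual-sch}, which via Theorem~\ref{thm:ext-sym}(iii) provides a ring isomorphism
\[
\extL{n}\;\cong\;\symL{n}\ten_{\IQ}\Wedge{\hwv{1},\dots,\hwv{n}},
\qquad
\hwv{j}=\sum_{k=0}^{j-1}(-1)^k\hf_{k}(x_{n+1-j+k},\dots,x_n)\,\wv{n+1-j+k}.
\]
By the preceding proposition $d_N$ restricts to $\extL{n}$, and since $d_N$ kills every $x_i$ and $\partial_i$ it vanishes on the whole polynomial subalgebra of $\extNH{n}$, hence on $\symL{n}$. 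Thus, as a DG-algebra, $\extL{n}$ is the algebra $\symL{n}\ten\Wedge{\hwv{1},\dots,\hwv{n}}$ with $\symL{n}$ in homological degree $0$ and in the kernel of $d_N$, each $\hwv{j}$ in homological degree $1$, and $d_N$ determined $\symL{n}$-linearly by the elements $d_N(\hwv{j})\in\symL{n}$. By the definition of the Koszul complex, this is exactly the Koszul DG-algebra on the sequence $\bigl(d_N(\hwv{1}),\dots,d_N(\hwv{n})\bigr)$ in $\symL{n}$, so everything comes down to computing these $n$ elements.

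For that computation I would apply $d_N$ termwise. Using $\symL{n}$-linearity the polynomial coefficients drop out, so
\[
d_N(\hwv{j})=\sum_{k=0}^{j-1}(-1)^k\hf_{k}(x_{n+1-j+k},\dots,x_n)\,d_N(\wv{n+1-j+k});
\]
substituting $d_N(\wv{m})=(-1)^m\hf_{N-m+1}(x_1,\dots,x_m)$ with $m=n+1-j+k$ and collecting the overall sign $(-1)^{k}(-1)^{n+1-j+k}=(-1)^{n+1-j}$ gives
\[
d_N(\hwv{j})=(-1)^{\,n+1-j}\sum_{k=0}^{j-1}\hf_{N-n+j-k}(x_1,\dots,x_{n+1-j+k})\,\hf_{k}(x_{n+1-j+k},\dots,x_n).
\]
The sum on the right is precisely the right-hand side of Lemma~\ref{lem:sym-ident} with $i:=n+1-j$ (so that $n-i=j-1$ and $N-i+1=N-n+j$), which identifies it with $\hf_{N-n+j}(x_1,\dots,x_n)$. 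Re-indexing the exterior generators in their original order via $\dschwv{i}=\hwv{n+1-i}$, this reads $d_N(\dschwv{i})=(-1)^{i}\hf_{N-i+1}(x_1,\dots,x_n)$ for $1\le i\le n$, which is exactly the defining formula for the Koszul differential of the sequence $\bigl((-1)^i\hf_{N-i+1}\bigr)_{1\le i\le n}$ in $\symL{n}$.

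I expect the routine-but-delicate point to be the index bookkeeping in the last display and its precise matching with Lemma~\ref{lem:sym-ident}: keeping straight the shift $i=n+1-j$, the variable ranges $x_1,\dots,x_{n+1-j+k}$ versus $x_{n+1-j+k},\dots,x_n$, and the accumulated signs. Once Theorem~\ref{thm:ext-sym}(iii) and the fact that $d_N$ restricts to $\extL{n}$ are granted, the rest is formal, modulo a harmless reversal of the homological grading to match the convention in which the exterior generators $\theta_i$ of the Koszul complex are placed in degree $-1$.
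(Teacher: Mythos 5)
Your proposal is correct and follows essentially the same route as the paper: it invokes Theorem~\ref{thm:ext-sym}(iii) with the dual Schubert exterior generators, uses $\symL{n}$-linearity of $d_N$, and evaluates $d_N$ on the generators via Lemma~\ref{lem:sym-ident} to obtain $d_N(\dschwv{i})=(-1)^{i}\hf_{N-i+1}(x_1,\dots,x_n)$, identifying the result as the Koszul differential. The only difference is the cosmetic re-indexing through $\hwv{j}=\dschwv{n+1-j}$, and your sign and index bookkeeping checks out.
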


\begin{proof}
\Omit{
By Theorem \ref{thm:ext-sym} we know that $\extL{n} \simeq\symL{n}\ten\Wedge{\wv{1}^s, \dots, \wv{n}^s}$ and $v \in \extL{n}$ if and only if $v = \sum f_{(i_1,\cdots, i_k)}\wv{i_1}^s\cdots \wv{i_k}^s$ then

\begin{equation*}
d_N(v) = \sum d_N(f_{(i_1,\cdots, i_k)}\wv{i_1}^s\cdots \wv{i_k}^s)
\end{equation*}
\hfill\\
Since $d_N$ is linear, it is enough to consider differential of $f \wv{i_1}^s\cdots \wv{i_k}^s$ for any $f\in\symL{n}$ and for any $k-$tuple $(i_1,\cdots, i_k)$ where $k\leq n$ and $1\leq i_1 < \cdots < i_k \leq n$.

\begin{equation*}
\begin{aligned}
d_N(f \wv{i_1}^s\cdots \wv{i_k}^s) &= d_N(f)\wv{i_1}^s\cdots \wv{i_k}^s + (-1)^{deg(f)} f d_N(\wv{i_1}^s\cdots \wv{i_k}^s)\\
&= f d_N(\wv{i_1}^s\cdots \wv{i_k}^s)\\
&= f\left(d_N(\wv{i_1}^s)\wv{i_2}^s\cdots \wv{i_k}^s + (-1)^{deg(\wv{i_1}^s)}\wv{i_1}^s d_N(\wv{i_2}^s\cdots \wv{i_k}^s) \right).
\end{aligned}
\end{equation*}

Therefore it is enough to check $d_N(\dschwv{i})$ where}

By Theorem \ref{thm:ext-sym} we know that $\extL{n} \simeq\symL{n}\ten\Wedge{\swv{1}, \dots, \swv{n}}$,
where $\swv{j}=\swv{j}(p_j)$ are determined by any choice of $p_j\in \Q[x_1,\ldots,x_n]^{S_{n-1}\times S_1}$
such that $\dd{j}\dd{j+1}\cdots\dd{n-1}(p_j)=1$.
For the purposes of computing the differential, it is especially convenient to work with the choice of $p_j$ as
constructed in \ref{sss:dual-schubert}.  In this case the resulting elements $\swv{i}$ are given by
\begin{equation*}
\dschwv{i}:= \sum _{j=0}^{n-i} (-1)^j\hf_j{(x_{i+j}, \cdots, x_n)}\wv{i+j}.
\end{equation*}
We know that the differential $d_N$ is linear with respect to the subalgebra $\symL{n}$ (this follows from $d_N(x_i)=0$ and the Leibniz rule), hence to prove the Proposition we need only show that $d_N(\dschwv{i})=(-1)^{i} \hf_{N-i+1}(x_1,\cdots, x_n)$.  Compute:
\begin{equation*}
\begin{aligned}
d_{N}(\dschwv{i})
                 =& \sum_{j=0}^{n-i} (-1)^j \hf_{j}(x_{i+j},\cdots, x_n)d_{N}(\wv{i+j}) \\
                 =& \sum_{j=0}^{n-i} (-1)^j \hf_{j}(x_{i+j},\cdots, x_n) (-1)^{i+j} \hf_{N-i-j+1}(x_1,\cdots, x_{i+j})\\
                 =& (-1)^{i} \hf_{N-i+1}(x_1,\cdots, x_n)
\end{aligned}
\end{equation*}
where the last equality follows from lemma~\ref{lem:sym-ident}. \Omit{Therefore, $d_N(\dschwv{i}) \in \extL{n}$ and $\left(\extL{n}, d_N\right)$ is a DG-subalgebra.}
\end{proof}

A sequence of elements $\mathbf{a}=(a_1,\ldots,a_r)\in R$ is called a \emph{regular sequence} if
\begin{itemize}
\item $a_1$ is not a zero divisor.
\item $a_i$ is not a zero divisor in $R/\la a_1,\ldots,a_{i-1}\ra$ for all $2\leq i\leq n$.
\end{itemize}
If $\mathbf{a}$ is regular, then the associated Koszul complex $K(\mathbf{a})$ has cohomology only in degree zero, where it is isomorphic to $R/\la a_1,\ldots,a_r\ra$.   Said differently, if $\mathbf{a}$ is a regular sequence then the canonical projection $K(\mathbf{a})\rightarrow R/\la a_1,\ldots,a_r\ra$ is a quasi-isomorphism.
\begin{corollary}
The DG-algebra $(\extL{n}, d_N)$ is quasi-isomorphic to the cohomology ring $H^{\ast}(Gr(n,N))$.
\end{corollary}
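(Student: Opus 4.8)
The plan is to combine Proposition~\ref{prop:dN-restricts} with the classical Borel presentation of $H^{\ast}(Gr(n,N))$. By Proposition~\ref{prop:dN-restricts}, the DG-algebra $(\extL{n},d_N)$ is isomorphic to the Koszul complex $K(\mathbf{a})$ over $R=\symL{n}$ associated to the sequence $\mathbf{a}=\bigl((-1)^{i}h_{N-i+1}\bigr)_{i=1}^{n}$. In view of the fact recalled just above the statement, the Corollary will follow once I establish two things: that $\mathbf{a}$ is a regular sequence in $\symL{n}$, and that $\symL{n}/\langle\mathbf{a}\rangle\cong H^{\ast}(Gr(n,N))$. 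The signs in $\mathbf{a}$ play no role, since they do not affect the ideal $\langle h_{N-n+1},\dots,h_{N}\rangle$ generated by the entries.

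For the second point I would invoke the Borel presentation directly: identifying $\symL{n}$ with the polynomial ring $\IQ[e_1,\dots,e_n]$ in the elementary symmetric polynomials, one has a ring isomorphism $\IQ[e_1,\dots,e_n]/\langle h_{N-n+1},\dots,h_{N}\rangle\cong H^{\ast}(Gr(n,N);\IQ)$, under which $e_i$ corresponds to the $i$-th Chern class of the tautological rank-$n$ subbundle and the relations $h_j=0$ for $j>N-n$ express the vanishing of the Chern classes of the rank-$(N-n)$ quotient bundle (see, e.g., \cite{Man}); this is the same quotient already appearing implicitly in Section~1 through the isomorphism $\NH{n}^{N}\cong\operatorname{Mat}((n)^!_{q^2},H^{\ast}(Gr(n,N)))$.

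For the first point, note that $\symL{n}\cong\IQ[e_1,\dots,e_n]$ is a graded polynomial ring, hence Cohen--Macaulay of Krull dimension $n$; a sequence of $n$ homogeneous elements of positive degree is then regular if and only if it is a homogeneous system of parameters, i.e.\ if and only if the quotient is finite-dimensional over $\IQ$. By the second point this quotient is $H^{\ast}(Gr(n,N);\IQ)$, which has finite dimension $\binom{N}{n}$ for $N\geq n$; the degenerate cases $N<n$, in which some entry of $\mathbf{a}$ equals $h_0=1$, give a contractible Koszul complex while $Gr(n,N)=\varnothing$, so the statement holds trivially. Hence $\mathbf{a}$ is regular, the projection $K(\mathbf{a})\to\symL{n}/\langle\mathbf{a}\rangle$ is a quasi-isomorphism, and composing with the isomorphism of the second point proves the Corollary. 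The only substantive point is the regularity of $h_{N-n+1},\dots,h_N$ in $\symL{n}$, and even that comes for free from the Borel presentation via the Cohen--Macaulay criterion; a self-contained alternative would be to check directly that the common zero locus of $h_{N-n+1},\dots,h_N$ in $\mathbb{A}^{n}$ reduces to the origin, forcing the quotient to be Artinian.
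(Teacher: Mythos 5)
Your proposal is correct and follows essentially the same route as the paper: identify $(\extL{n},d_N)$ with the Koszul complex on $(-1)^ih_{N-i+1}$ via Proposition~\ref{prop:dN-restricts}, observe that this sequence is regular in $\symL{n}$, and identify the resulting quotient $\symL{n}/\la h_{N-n+1},\dots,h_N\ra$ with $H^{\ast}(Gr(n,N))$ via the Borel presentation. The only difference is that the paper cites \cite[Proposition 7.2]{Wu} for regularity, whereas you derive it from the Cohen--Macaulay/homogeneous-system-of-parameters criterion (or the zero-locus argument), which is a perfectly valid self-contained substitute.
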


\begin{proof}
The sequence $h_N,h_{N-1},\ldots,h_{N-n+1}\in \symL{n}$ is a regular sequence, see for example~\cite[Proposition 7.2]{Wu}.  Thus, the cohomology of the associated Koszul complex is isomorphic to the quotient $\symL{n}/\la h_N,h_{N-1},\ldots,h_{N-n+1}\ra$, which is known to be isomorphic to $\Lambda_n/\la \hf_{N-n+1}\ra \cong H^{\ast}(Gr(n,N))$.
\end{proof}

\subsection{Deformed differentials} \label{sec:deformed-diff}

\subsubsection{Deformed cyclotomic quotients}
The cyclotomic quotients of the nilHecke algebra, and KLR algebras more generally, admit deformations called deformed cyclotomic quotients defined in~\cite{Web}. For us the most relevant reference is \cite[Section 3.2]{RoseW}. 

Let $\kappa_1,\ldots,\kappa_N\in \C$ be given, and let $\Sigma$ denote the root multiset consisting of pairwise distinct complex numbers $\lambda_1, \dots, \lambda_{\ell}$ corresponding to the roots of the polynomial
\begin{equation} \label{eq:root-multi}
P(x) = x^N + \sum_{j=1}^{N}\kappa_{j}x^{N-j},
\end{equation}
with multiplicities $N_1, \dots, N_{\ell}$.
For each $N>0$ define the {\em deformed cyclotomic ideal} $I^{\Sigma}_N$ associated to $\Sigma$ is the ideal of $\NH{n}$ defined by
\begin{equation}
I^{\Sigma}_N := \left\langle \sum_{j=0}^{N}\kappa_{j}x_1^{N-j} \right\rangle, \qquad \kappa_i \in \mathbb{C}
\end{equation}
where we take $\kappa_0=1$.  We define the {\em deformed cyclotomic quotient} $$\NH{n}^{\Sigma}:= \NH{n}/ I^{\Sigma}_N.$$

In \cite[Section 3.2]{RoseW} it is shown that the deformed cyclotomic quotient rings $\NH{n}^{\Sigma}$ are isomorphic to matrix rings of size $n!$ with coefficients in the $GL(N)$-equivariant cohomology ring $H^{\ast}_{GL(N)}(Gr(n,N))$ with equivariant parameters equal to $\und{\kappa}=(\kappa_1, \kappa_2, \dots, \kappa_N)$.  We denote this specialization by $H^{\Sigma}_{n}$.  If the parameters $\und{\kappa}$ are left generic, then the center of the deformed cyclotomic quotient is just the $GL(N)$-equivariant cohomology itself~\cite[Theorem 2.10]{Wu2}.

\begin{theorem}[Theorem 13 \cite{RoseW}] \label{thm:RW}
There is an algebra isomorphism
\[
 H^{\Sigma}_n \cong \bigoplus_{ \overset{\sum n_j =n}{0 \leq n_j \leq N}} \bigotimes_{j=1}^{\ell} 
 H^{\ast}(Gr(n_j,N_j)).
\]
\end{theorem}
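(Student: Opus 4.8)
The plan is to make $H^{\Sigma}_n$ completely explicit and then recognize the right-hand side as its canonical decomposition into local Artinian factors. First I would record the presentation of $GL(N)$-equivariant cohomology underlying the isomorphism $\NH{n}^{\Sigma}\cong\mathsf{Mat}(n!,H^{\Sigma}_n)$ of \cite[Section~3.2]{RoseW}: specializing the equivariant parameters to $\underline{\kappa}$ realizes $H^{\Sigma}_n$ as the coordinate ring of the affine scheme of monic factorizations $P(t)=f(t)g(t)$ with $\deg f=n$ and $\deg g=N-n$; concretely,
\[
H^{\Sigma}_n\;\cong\;\C[x_1,\dots,x_n]^{\Sgp_n}\otimes\C[y_1,\dots,y_{N-n}]^{\Sgp_{N-n}}\Big/\Big\langle\ \textstyle\prod_i(t-x_i)\prod_j(t-y_j)=P(t)\ \Big\rangle ,
\]
where the $x_i$ (resp.\ $y_j$) are Chern roots of the tautological sub- (resp.\ quotient) bundle and the relations come from equating coefficients of each power of $t$. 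Two facts follow at once: $H^{\Sigma}_n$ is a finite-dimensional $\C$-algebra of dimension $\binom{N}{n}$ (equivariant cohomology being free of that rank over the parameter ring), and, since $\C[t]$ is a UFD and $P(t)=\prod_{i=1}^{\ell}(t-\lambda_i)^{N_i}$, the closed points of $\operatorname{Spec}H^{\Sigma}_n$ are exactly the factorizations $f=\prod_i(t-\lambda_i)^{n_i}$, $g=\prod_i(t-\lambda_i)^{N_i-n_i}$, hence are in bijection with tuples $\vec{n}=(n_1,\dots,n_\ell)$ satisfying $0\le n_i\le N_i$ and $\sum_i n_i=n$.

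Being Artinian, $H^{\Sigma}_n$ splits canonically as $\bigoplus_{\vec n}(H^{\Sigma}_n)_{\mathfrak m_{\vec n}}$, one local ring per point above. The heart of the argument is to identify the factor at $\mathfrak m_{\vec n}$ with $\bigotimes_{i=1}^{\ell}H^{\ast}(Gr(n_i,N_i))$. Here I would use that the factors $P_i(t):=(t-\lambda_i)^{N_i}$ of $P$ are pairwise coprime: after localizing at $\mathfrak m_{\vec n}$, any monic factor $f$ of $P$ close to $\prod_i(t-\lambda_i)^{n_i}$ decomposes uniquely as $f=\prod_i f_i$ with $f_i$ monic of degree $n_i$ dividing $P_i$, and similarly for $g$ --- a Hensel/Weierstrass-preparation type statement, equivalently the propagation to factorization schemes of the isomorphism $\C[t]/(P)\cong\prod_i\C[t]/(P_i)$. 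Thus the map $(f_i,g_i)_i\mapsto(\prod_i f_i,\prod_i g_i)$ induces an isomorphism of $(H^{\Sigma}_n)_{\mathfrak m_{\vec n}}$ with the tensor product over $i$ of the rings of monic factorizations of $(t-\lambda_i)^{N_i}$ into factors of degrees $n_i$ and $N_i-n_i$; each such ring is already local, since $(t-\lambda_i)^{N_i}$ has a single root.

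It remains to identify these single-root factorization rings with Grassmannian cohomology. For $(t-\lambda_i)^{N_i}$, the substitution $t\mapsto t+\lambda_i$ turns the defining relations into $\prod_a(t-x^{(i)}_a)\prod_b(t-y^{(i)}_b)=t^{N_i}$, which is the classical Borel presentation of $H^{\ast}(Gr(n_i,N_i))$ in terms of Chern classes of the tautological sub- and quotient bundles. Assembling over all $\vec n$ then gives the theorem. As a bookkeeping check --- which can also be used to conclude, once one writes the natural comparison map sending the $x$- and $y$-symmetric functions to the symmetric functions in the shifted variables $x^{(i)}_a+\lambda_i$ and $y^{(i)}_b+\lambda_i$ and verifies it is onto --- Vandermonde's identity gives $\sum_{\vec n}\prod_i\binom{N_i}{n_i}=\binom{N}{n}=\dim_\C H^{\Sigma}_n$, so a surjective (or injective) algebra map between the two sides is automatically an isomorphism.

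The step I expect to be the main obstacle is precisely the coprime-factorization statement used to split $(H^{\Sigma}_n)_{\mathfrak m_{\vec n}}$ as a tensor product indexed by the roots $\lambda_i$: the global presentation entangles all of $x_1,\dots,x_n$, and separating them into blocks ``near $\lambda_i$'' is legitimate only after passing to the localization, where the appropriate Hensel-type lemma must be invoked. The remaining ingredients --- the explicit presentation quoted from \cite{RoseW}, the single-Grassmannian identification, and the dimension count --- are routine.
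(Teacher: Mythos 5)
This theorem is not proved in the paper at all: it is imported verbatim as Theorem 13 of \cite{RoseW}, so there is no internal argument to compare yours against, and your sketch should be judged as an independent derivation. On its own terms it is correct. The presentation of $H^{\Sigma}_n$ as the coordinate ring of monic factorizations $P=fg$ with $\deg f=n$ is exactly the specialization of the Borel-type presentation of $H^{\ast}_{GL(N)}(Gr(n,N))$ underlying the matrix-ring isomorphism quoted from \cite[Section 3.2]{RoseW}; freeness of equivariant cohomology over the base gives $\dim_{\C}H^{\Sigma}_n=\binom{N}{n}$; and the Artinian splitting into local factors indexed by tuples $(n_1,\dots,n_\ell)$ with $0\leq n_i\leq N_i$, $\sum_i n_i=n$ (which is how the condition ``$0\leq n_j\leq N$'' in the statement should be read, the other factors being empty Grassmannians) follows from the Nullstellensatz and unique factorization in $\C[t]$. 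The step you flag as the obstacle does go through: an Artinian local $\C$-algebra is complete, hence Henselian, so the universal factors $f,g$ lift uniquely to products $\prod_i f_i$, $\prod_i g_i$ with $f_i\equiv(t-\lambda_i)^{n_i}$ modulo the maximal ideal; moreover the image of your comparison map is itself Artinian local and contains the coefficients of $f$ and $g$, so by uniqueness of Hensel factorization the coefficients of the $f_i,g_i$ already lie in that image, which is exactly the surjectivity you need, and the Vandermonde count $\sum_{\vec n}\prod_i\binom{N_i}{n_i}=\binom{N}{n}$ then forces each local surjection to be an isomorphism. This is the same commutative-algebra mechanism --- splitting a deformed Grassmannian/cyclotomic ring according to the roots of $P$, in the spirit of Gornik and Wu --- that stands behind the cited result, which Rose--Wedrich obtain via explicit root idempotents in their deformed setting; your scheme-theoretic Spec/Hensel phrasing buys independence from the machinery of \cite{RoseW}, at the cost of leaving the Hensel bookkeeping, which you correctly identified as the crux, to be written out.
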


We will realize both the deformed cyclotomic quotient $\NH{n}^{\Sigma}$ and the rings $H^{\Sigma}_n$ within the context of the extended nilHecke algebra.   For these realization we make use of the following lemma.

\begin{lemma} \label{lem:deformed-ideal} The following identities hold in $\NH{n}^{\Sigma}$
\begin{enumerate}
  \item For any $y\in\mathbb{N}$, $$\sum_{j=0}^{N}\kappa_{j}\left(x_{i+1}^{y+(N-j)}\dd{i}\right) = \sum_{j=0}^{N}\kappa_{j}\left(\dd{i}x_{i+1}^{y+(N-j)}\right)$$
  \item For any $y\in\mathbb{N}$, $$\sum_{j=0}^{N}\kappa_{j}\left(x_{i+1}^{y+(N-j)}\dd{i}\right)=0. $$

  \item For any $1 \leq i \leq n$, $$\sum_{j=0}^{N}\kappa_{j}x_{i}^{N-j}=0.$$

 \item For any $m\leq N$,
  $$\sum_{j=0}^{N-m+1}\kappa_{j}\sum_{\sum a_i =(N-m+1-j)} x_1^{a_1} x_2^{a_2}\dots x_{m}^{a_m} =\sum_{j=0}^{N-m+1}\kappa_j \hf_{N-m+1-j}(x_1,\cdots,x_m)=0.$$

\end{enumerate}

\end{lemma}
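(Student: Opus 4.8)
The plan is to reduce all four identities to part (3), proving them in the order (3), then (1) and (2), then (4). Throughout I write $P(x)=\sum_{j=0}^{N}\kappa_{j}x^{N-j}$, so that $\kappa_{0}=1$ and $I_{N}^{\Sigma}=\langle P(x_{1})\rangle$; an identity ``$=0$ in $\NH{n}^{\Sigma}$'' means ``lies in $I_{N}^{\Sigma}$ as an element of $\NH{n}$''.

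For \textbf{(3)} the claim is $P(x_{i})\in I_{N}^{\Sigma}$ for all $i$, which I would prove by induction on $i$, the base case $i=1$ being the definition of $I_{N}^{\Sigma}$. For the inductive step, first record an identity that already holds in $\NH{n}$: applying the dot-slide relation \eqref{eq:ind-dotslide} termwise to $\dd{i}P(x_{i})=\sum_{j}\kappa_{j}\,\dd{i}x_{i}^{N-j}$ gives $\dd{i}P(x_{i})=P(x_{i+1})\dd{i}+Q$, where $Q=\sum_{j}\kappa_{j}\hf_{N-j-1}(x_{i},x_{i+1})$ is a polynomial satisfying $(x_{i}-x_{i+1})Q=P(x_{i})-P(x_{i+1})$. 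Left-multiplying by $x_{i}-x_{i+1}$ and rearranging yields
\[
P(x_{i+1})=\bigl(P(x_{i})-(x_{i}-x_{i+1})\dd{i}P(x_{i})\bigr)+(x_{i}-x_{i+1})P(x_{i+1})\dd{i}.
\]
Now assume $P(x_{i})\in I_{N}^{\Sigma}$. Then $z:=P(x_{i})-(x_{i}-x_{i+1})\dd{i}P(x_{i})\in I_{N}^{\Sigma}$ since $I_{N}^{\Sigma}$ is two-sided, and substituting the displayed equation for $P(x_{i+1})$ into its own last term and then invoking $\dd{i}^{2}=0$ collapses the recursion to $P(x_{i+1})=z+(x_{i}-x_{i+1})z\,\dd{i}\in I_{N}^{\Sigma}$. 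This closes the induction. Parts \textbf{(1)} and \textbf{(2)} are then immediate: $\sum_{j=0}^{N}\kappa_{j}x_{i+1}^{y+(N-j)}=x_{i+1}^{y}P(x_{i+1})$, which lies in $I_{N}^{\Sigma}$ by (3), so both sides of (1) and the left side of (2), being $\NH{n}$-multiples of this element, vanish in $\NH{n}^{\Sigma}$.

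For \textbf{(4)} I would induct on $m$ (with $1\le m\le N$, and $m\le n$ so that $x_{m}$ is defined), using the following identity in $\NH{n}$, valid for every $a\ge 0$ with the convention $\hf_{r}:=0$ for $r<0$:
\[
\dd{1}\dd{2}\cdots\dd{m-1}(x_{m}^{a})=\sum_{k=1}^{m}(-1)^{k-1}\hf_{a-k+1}(x_{1},\dots,x_{k})\,\dd{k}\dd{k+1}\cdots\dd{m-1}.
\]
This is proved by an easy induction on $m$ from \eqref{eq:ind-dotslide} together with the recursion $\hf_{k}(x_{1},\dots,x_{m})=\sum_{l\ge 0}x_{m}^{l}\hf_{k-l}(x_{1},\dots,x_{m-1})$. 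Replacing $x_{m}^{a}$ by $P(x_{m})$ (multiply by $\kappa_{j}$ and sum over $j$) turns the $k$-th summand into $(-1)^{k-1}\bigl(\sum_{j}\kappa_{j}\hf_{(N-k+1)-j}(x_{1},\dots,x_{k})\bigr)\dd{k}\cdots\dd{m-1}$. The left-hand side lies in $I_{N}^{\Sigma}$ by (3); the $k=1$ summand is the defining relation $P(x_{1})=0$; and every summand with $1<k<m$ lies in $I_{N}^{\Sigma}$ by the inductive hypothesis (it is precisely assertion (4) with $m$ replaced by $k$). Hence the one remaining summand, namely $(-1)^{m-1}\sum_{j}\kappa_{j}\hf_{N-m+1-j}(x_{1},\dots,x_{m})$, must also lie in $I_{N}^{\Sigma}$, which is exactly (4).

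The only non-formal point is part (3): naive manipulations of the cyclotomic relation merely reproduce it, and the trick is to isolate the self-referential identity $P(x_{i+1})=z+(x_{i}-x_{i+1})P(x_{i+1})\dd{i}$ and terminate the recursion using $\dd{i}^{2}=0$. This is the deformed counterpart of the classical fact that $x_{i}^{N}\in\langle x_{1}^{N}\rangle$ inside $\NH{n}$; once it is in hand, parts (1), (2) and (4) are essentially bookkeeping with the nilHecke relations and standard symmetric-function identities.
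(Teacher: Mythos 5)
Your arguments for parts (1)--(3) are correct, and they take a slightly different route from the paper's. The paper justifies (1) by \eqref{eq:ind-dotslide} (a justification that tacitly uses $\sum_j\kappa_j x_i^{N-j}\in I^{\Sigma}_N$ at the same index, so the paper's (1)--(3) are really an implicit induction on $i$ as well), deduces (2) from (1) via $\dd{i}x_{i+1}\dd{i}=-\dd{i}$ and $\dd{i}^2=0$, and only then gets (3) from (2) using $\dd{i}x_i-x_{i+1}\dd{i}=1$. You instead prove (3) directly by induction on $i$, closing the self-referential relation $P(x_{i+1})=z+(x_i-x_{i+1})P(x_{i+1})\dd{i}$ with the $\dd{i}^2=0$ substitution; I checked this computation and it is correct, and once (3) is known your observation that both sides of (1) and the left side of (2) are two-sided multiples of $x_{i+1}^{y}P(x_{i+1})$ finishes those parts immediately. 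This portion is fine and arguably cleaner than the paper's ordering.

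Part (4), however, has a genuine gap: the expansion identity you rely on is false. Take $m=3$, $a=1$: using $\dd{2}x_3=x_2\dd{2}-1$ and $\dd{1}x_2=x_1\dd{1}-1$ one finds $\dd{1}\dd{2}x_3=x_1\dd{1}\dd{2}-\dd{2}-\dd{1}$ in $\NH{3}$, whereas your right-hand side is $x_1\dd{1}\dd{2}-\dd{2}$; the discrepancy $-\dd{1}$ is also visible in the polynomial representation, since the two sides act differently on $x_1$. The correct expansion contains additional terms whose coefficients are complete symmetric functions in non-initial variable sets, e.g.\ $\dd{1}\dd{2}x_3^{a}=x_1^{a}\dd{1}\dd{2}-\hf_{a-1}(x_1,x_2)\dd{2}-\hf_{a-1}(x_1,x_3)\dd{1}+\hf_{a-2}(x_1,x_2,x_3)$, and a term such as $\sum_j\kappa_j\hf_{N-1-j}(x_1,x_3)\dd{1}$ is not controlled by your inductive hypothesis, which only covers the initial segments $(x_1,\dots,x_k)$. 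So the induction does not close as written. It can be repaired, for instance by showing that the deformed complete symmetric function of the appropriate degree in \emph{any} $k$ of the variables lies in $I^{\Sigma}_N$ (it is conjugate to the initial-segment one by the invertible involutions $1-(x_r-x_{r+1})\dd{r}$), or by following the paper, which establishes the two-variable case through the commutator formula $\sum_j\kappa_j\dd{i}x_i^{N-j}-\sum_j\kappa_j x_i^{N-j}\dd{i}=\sum_j\kappa_j\hf_{N-j-1}(x_i,x_{i+1})$ and then runs the induction step of \cite[Proposition 2.8]{Lau4}.
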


\begin{proof} The first claim follows from \eqref{eq:ind-dotslide}.  This implies
\begin{align*}
\sum_{j=0}^{N}\kappa_{j}\left(x_{i+1}^{y+(N-j)}\dd{i}\right) &= -\sum_{j=0}^{N}\kappa_{j}\left(x_{i+1}^{y+(N-j)}\dd{i}x_{i+1}\dd{i} \right) = -\sum_{j=0}^{N}\kappa_{j}\left( \dd{i}x_{i+1}^{y+(N-j)+1} \dd{i}\right)\\
&= -\sum_{j=0}^{N}\kappa_{j}\left( x_{i+1}^{y+(N-j)+1}\dd{i}^2 \right) = 0
\end{align*}
proving the second claim.   For the third identity use nilHecke relations \eqref{eq:nilHecke} and the second identity,
\begin{align*}
\sum_{j=0}^{N}\kappa_jx_{i+1}^{N-j} &= \sum_{j=0}^{N}\kappa_j\left( x_{i+1}^{N-j}\dd{i}x_{i} - x_{i+1}^{N-j+1}\dd{i}\right) = \sum_{j=0}^{N}\kappa_jx_{i+1}^{N-j}\dd{i}x_{i} - \sum_{j=0}^{N}\kappa_jx_{i+1}^{N-j+1}\dd{i} = 0.
\end{align*}

The last claim is proven by induction. Using nilHecke relations \eqref{eq:nilHecke} we have
\begin{equation*}
\sum_{j=0}^{N}\kappa_{j}\left(\dd{i}x_{i}^{N-j}\right) - \sum_{j=0}^{N}\kappa_{j}\left(x_{i}^{N-j}\dd{i}\right) = \sum_{j=0}^{N-1}\kappa_{j}\left(\sum_{a+b=y + (N-j)-1} x_{i}^a x_{i+1}^b\right).
\end{equation*}
The induction step is identical to Proposition 2.8 in \cite{Lau4}.
\end{proof}

\subsubsection{Deformed differentials}

Let $\Sigma$ denote the root multiset corresponding to the roots and multiplicities of the polynomial \eqref{eq:root-multi}.  To each   $\Sigma$ define a differential $d_N^{\Sigma}$ on $\NH{n}\ext$, which we call {\em deformed differential},  by
\begin{equation}
 \begin{tabular}{rrr}
$d_N^{\Sigma}(\dd{i})=0$,&   $d_N^{\Sigma}(x_i)=0$,&  and \quad $d_N^{\Sigma}(\wv{i}) = \sum_{j=0}^{N-i+1}(-1)^{i+1}\kappa_j \hf_{N-i+1-j}(x_1,\cdots,x_i)$.
\end{tabular}
\end{equation}

\begin{proposition} \label{prop:deformed differential satisfies relations} The map $d_N^{\Sigma}$ satisfies the relations
\begin{enumerate}
\item  $ \dd{i}d_N^{\Sigma}(\wv{i+1}) = d_N^{\Sigma}(\wv{i+1})\dd{i}$
\item $\dd{1}d_N^{\Sigma}(\wv{i}) + d_N^{\Sigma}(\wv{i+1})x_{i+1}\dd{i} = d_N^{\Sigma}(\wv{i})\dd{i} + \dd{i}x_{i+1}d_N^{\Sigma}(\wv{i+1})
$
\end{enumerate}
for all $1 \leq i \leq n$.
\end{proposition}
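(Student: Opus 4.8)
The plan is to verify $(1)$ and $(2)$ directly, as identities in the ordinary nilHecke algebra $\NH{n}$, since $d_N^{\Sigma}(\wv{i})$ and $d_N^{\Sigma}(\wv{i+1})$ are just polynomials. Set
\[
A:=d_N^{\Sigma}(\wv{i})=(-1)^{i+1}\sum_{j=0}^{N-i+1}\kappa_j\,\hf_{N-i+1-j}(x_1,\dots,x_i),
\qquad
B:=d_N^{\Sigma}(\wv{i+1})=(-1)^{i}\sum_{j=0}^{N-i}\kappa_j\,\hf_{N-i-j}(x_1,\dots,x_{i+1}).
\]
I will use three standard facts: the straightening rule $\dd{i}f=\dd{i}(f)+s_i(f)\dd{i}$ for a polynomial $f$, where $\dd{i}(f)$ denotes the divided difference of $f$ (a consequence of the relations \eqref{eq:nilHecke}, cf.\ \eqref{eq:ind-dotslide}); the resulting observation that an $s_i$-invariant polynomial commutes with $\dd{i}$; and the symmetric-function identity
\[
\dd{i}\bigl(\hf_m(x_1,\dots,x_i)\bigr)=\hf_{m-1}(x_1,\dots,x_{i+1}),
\]
which follows from expanding $\hf_m(x_1,\dots,x_i)=\sum_k\hf_k(x_1,\dots,x_{i-1})\,x_i^{m-k}$ and applying $\dd{i}(x_i^{p})=\hf_{p-1}(x_i,x_{i+1})$.

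Relation $(1)$ is immediate: $B$ is symmetric in $x_1,\dots,x_{i+1}$, hence invariant under the transposition $s_i$, hence $\dd{i}B=B\dd{i}$. (The same argument gives $\dd{i}\,d_N^{\Sigma}(\wv{j})=d_N^{\Sigma}(\wv{j})\dd{i}$ for every $j\neq i$, since $d_N^{\Sigma}(\wv{j})$ is symmetric in a variable set that either contains both $x_i,x_{i+1}$ or neither.)

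For relation $(2)$, the key step is to observe that summing the symmetric-function identity above against the coefficients $(-1)^{i+1}\kappa_j$ yields $\dd{i}(A)=-B$, equivalently $s_i(A)=A+(x_i-x_{i+1})B$ (using $1-s_i=(x_i-x_{i+1})\dd{i}$). Feeding this into the straightening rule, together with $\dd{i}(x_{i+1})=-1$, $s_i(x_{i+1})=x_i$, and $\dd{i}B=B\dd{i}$ from $(1)$, and then expanding both sides of $(2)$, one finds that the left side $\dd{i}A+Bx_{i+1}\dd{i}$ and the right side $A\dd{i}+\dd{i}x_{i+1}B$ both collapse to $A\dd{i}+x_iB\dd{i}-B$; in particular the $x_{i+1}$-terms cancel on their own, so no further input is needed. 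This is a short calculation that I will not carry out here.

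I do not anticipate a genuine obstacle: the entire content is the shift identity $\dd{i}(d_N^{\Sigma}(\wv{i}))=-d_N^{\Sigma}(\wv{i+1})$ — that the divided difference operator moves the complete symmetric functions defining $d_N^{\Sigma}$ precisely as the mixed relation of $\extNH{n}$ requires — the only care needed being the bookkeeping of signs in $A$, $B$ and in the straightening rule. Indeed, $(1)$ and $(2)$ are exactly the conditions the graded Leibniz rule forces on $d_N^{\Sigma}$ from the two defining relations $\dd{i}\wv{i+1}=\wv{i+1}\dd{i}$ and $\dd{i}\wv{i}=\wv{i}\dd{i}-\wv{i+1}(x_{i+1}\dd{i}-\dd{i}x_{i+1})$ of $\extNH{n}$ (with all Leibniz signs trivial, since each product involved has a factor annihilated by $d_N^{\Sigma}$), so once they hold it follows that $d_N^{\Sigma}$ extends uniquely to a degree $-1$ derivation of $\extNH{n}$, the remaining defining relations being compatible for obvious reasons.
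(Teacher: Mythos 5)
Your proof is correct, and while it verifies the same two identities directly inside $\NH{n}$, your treatment of part (2) is organized differently from the paper's. The paper expands both $d_N^{\Sigma}(\wv{i+1})x_{i+1}\dd{i}$ and $\dd{i}x_{i+1}d_N^{\Sigma}(\wv{i+1})$ term by term into sums of complete symmetric polynomials, subtracts, and only at the end invokes \eqref{eq:ind-dotslide} to identify the difference with $d_N^{\Sigma}(\wv{i})\dd{i}-\dd{i}d_N^{\Sigma}(\wv{i})$. You instead isolate the single symmetric-function identity $\dd{i}\bigl(d_N^{\Sigma}(\wv{i})\bigr)=-d_N^{\Sigma}(\wv{i+1})$ (correct: applying $\dd{i}\bigl(\hf_m(x_1,\dots,x_i)\bigr)=\hf_{m-1}(x_1,\dots,x_{i+1})$ termwise, the $j=N-i+1$ term vanishes and the sign flips) and then use the straightening rule $\dd{i}f=\dd{i}(f)+s_i(f)\dd{i}$, of which \eqref{eq:ind-dotslide} is the special case the paper uses. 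The calculation you deferred does go through exactly as you state: with $B$ commuting with $x_{i+1}$ and with $\dd{i}$, both sides of (2) reduce to $A\dd{i}+x_iB\dd{i}-B$ in your notation. What your route buys is a shorter, less bookkeeping-heavy verification that makes the real content (the shift identity relating $d_N^{\Sigma}(\wv{i})$ and $d_N^{\Sigma}(\wv{i+1})$ under $\dd{i}$) explicit, and your closing observation correctly explains why (1)--(2) are precisely what the Leibniz rule demands of the defining relations of $\extNH{n}$, a point the paper defers to the subsequent corollary. Part (1) is argued identically in both proofs ($d_N^{\Sigma}(\wv{i+1})$ is $s_i$-invariant), and you correctly read the $\dd{1}$ in the statement of (2) as a typo for $\dd{i}$.
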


\begin{proof}
The first identity holds since $d_N^{\Sigma}(\wv{i+1})$ is symmetric in $x_i$ and $x_{i+1}$.
For  the second identity we compute
\begin{align*}
d_N^{\Sigma}(\wv{i+1})x_{i+1}\dd{i}
&=
-\sum_{j=0}^{N-i}(-1)^{i+1}\kappa_j \sum_{a+b=N-i+1-j}\hf_{a}(x_1,\cdots,x_{i-1},x_{i+1})\dd{i}\\
& \quad+
\sum_{j=0}^{N-i}(-1)^{i+2}\kappa_j \left(\sum_{a+b=N-i-j}\hf_{a}(x_1,\cdots,x_i)
  \sum_{k' + \ell' = b-1}x_{i}^{k'+1}x_{i+1}^{\ell' +1}\dd{i}\right)
\end{align*}

Similarly,
\begin{align*}
\dd{i}x_{i+1}d_N^{\Sigma}(\wv{i+1})
&=
-\dd{i}\sum_{j=0}^{N-i}(-1)^{i+1}\kappa_j\hf_{a}(x_1,\cdots,x_{i-1},x_{i+1})\\
&\quad+
\sum_{j=0}^{N-i}(-1)^{i+2}\kappa_j \dd{i} \left( \sum_{a+b=N-i-j} \hf_{a}(x_1,\cdots,x_{i-1})\sum_{k'+\ell'=b-1}x_{i}^{k'+1}x_{i+1}^{\ell' +1}\right)
\end{align*}
Therefore,
\begin{align*}
d_N^{\Sigma}(\wv{i+1})x_{i+1}\dd{i} - \dd{i}x_{i+1}d_N^{\Sigma}(\wv{i+1})
&=
-\sum_{j=0}^{N-i}(-1)^{i+1}\kappa_j \sum_{a+b=N-i+1-j}\hf_{a}(x_1,\cdots,x_{i-1},x_{i+1})\dd{i} \\
&\qquad +\dd{i}\sum_{j=0}^{N-i}(-1)^{i+1}\kappa_j \hf_{N-i+1-j}(x_1,\cdots,x_{i-1},x_{i+1})
\end{align*}
and the result follows using \eqref{eq:ind-dotslide}.
\end{proof}

\begin{corollary}
The deformed differential $d_N^{\Sigma}$ defines a degree $-1$ differential on $\extNH{n}$.
\end{corollary}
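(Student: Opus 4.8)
The plan is to verify three things: that the prescribed values of $d_N^{\Sigma}$ on the generators $\xv{i}, \dd{i}, \wv{i}$ are compatible with the defining relations of $\extNH{n}$, so that $d_N^{\Sigma}$ extends uniquely to a graded derivation of $\extNH{n}$ in the sense of \S\ref{subsec:diff}; that this derivation is homogeneous of degree $-1$; and that it squares to zero. The last two points are quick once the first is in hand, so the bulk of the argument is the check on relations.

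First I would dispose of the easy relations. The nilHecke relations \eqref{eq:nilHecke} involve only $\xv{i}$ and $\dd{i}$, all annihilated by $d_N^{\Sigma}$, so the Leibniz rule applied to any of them gives $0=0$. For $\xv{i}\wv{j}=\wv{j}\xv{i}$ the Leibniz rule reduces the check to $\xv{i}\,d_N^{\Sigma}(\wv{j})=d_N^{\Sigma}(\wv{j})\,\xv{i}$, which holds since $d_N^{\Sigma}(\wv{j})$ is a polynomial in $\xv{1},\ldots,\xv{n}$; and for $\wv{i}\wv{j}=-\wv{j}\wv{i}$, since each $d_N^{\Sigma}(\wv{\ell})$ is an even polynomial commuting with every $\xv{m}$ and every $\wv{m}$, the two sides agree after routine sign bookkeeping. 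The substantive relation is the mixed one, $\dd{i}\wv{j}=\wv{j}\dd{i}-\delta_{ij}\wv{i+1}(\xv{i+1}\dd{i}-\dd{i}\xv{i+1})$. Applying the Leibniz rule together with $d_N^{\Sigma}(\xv{m})=d_N^{\Sigma}(\dd{m})=0$ turns this into the identity
\[
\dd{i}\,d_N^{\Sigma}(\wv{j})=d_N^{\Sigma}(\wv{j})\,\dd{i}-\delta_{ij}\,d_N^{\Sigma}(\wv{i+1})(\xv{i+1}\dd{i}-\dd{i}\xv{i+1})
\]
in $\NH{n}$. For $i\neq j$ the correction term vanishes, and the remaining equality holds because $d_N^{\Sigma}(\wv{j})$ is a symmetric polynomial in $\xv{1},\ldots,\xv{j}$, hence fixed by $s_i$ (as $i\neq j$) and therefore commuting with $\dd{i}$. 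For $i=j$, using part $(1)$ of Proposition~\ref{prop:deformed differential satisfies relations} to rewrite $d_N^{\Sigma}(\wv{i+1})\,\dd{i}\xv{i+1}=\dd{i}\xv{i+1}\,d_N^{\Sigma}(\wv{i+1})$, the displayed identity becomes precisely part $(2)$ of that Proposition. This case is the one genuine obstacle, and it is exactly what Proposition~\ref{prop:deformed differential satisfies relations} was designed to handle; everything else is formal.

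It then remains to record that $d_N^{\Sigma}$ has degree $-1$ and that $(d_N^{\Sigma})^2=0$. With respect to the homological grading placing $\NH{n}$ in degree $0$ and each $\wv{i}$ in degree $1$, the element $d_N^{\Sigma}(\wv{i})\in\NH{n}$ lies in degree $0$ while $d_N^{\Sigma}$ kills $\xv{i}$ and $\dd{i}$, so $d_N^{\Sigma}$ lowers degree by one (if one also wants homogeneity for the internal grading one gives the parameter $\kappa_j$ internal degree $2j$, just as for $d_N$ in the case $\kappa_j=\delta_{j0}$). Finally, $(d_N^{\Sigma})^2$ is again a derivation, so it suffices to check it on generators: it kills $\xv{i}$ and $\dd{i}$ trivially, and $(d_N^{\Sigma})^2(\wv{i})=d_N^{\Sigma}\big(d_N^{\Sigma}(\wv{i})\big)=0$ because $d_N^{\Sigma}(\wv{i})$ is a polynomial in $\xv{1},\ldots,\xv{n}$, all annihilated by $d_N^{\Sigma}$.
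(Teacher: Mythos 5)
Your argument is correct and follows the paper's own route: the paper's proof simply observes that the only nontrivial compatibilities are exactly parts (1) and (2) of Proposition~\ref{prop:deformed differential satisfies relations}, which is precisely where your check of the mixed relation $\dd{i}\wv{j}=\wv{j}\dd{i}-\delta_{ij}\wv{i+1}(\xv{i+1}\dd{i}-\dd{i}\xv{i+1})$ lands. The remaining verifications you spell out (the purely even relations, the $i\neq j$ case via $s_i$-invariance of $d_N^{\Sigma}(\wv{j})$, the degree count, and $(d_N^{\Sigma})^2=0$ on generators) are the routine details the paper leaves implicit, so there is nothing to add.
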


\begin{proof}
The only nontrivial relations to verify are proven in Proposition~\ref{prop:deformed differential satisfies relations}.
\end{proof}

\begin{theorem}
The $DG-$algebra $(\extNH{n},d_N^{\Sigma})$ is quasi-isomorphic to deformed cyclotomic quotient of the nilHecke algebra
$\NH{n}^{\Sigma} = \NH{n}/\la (\sum_{j=0}^{N}\kappa_{j}x_1^{N-j}) \ra$.
\end{theorem}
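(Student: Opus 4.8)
The plan is to follow the template of Proposition~\ref{prop:dN-restricts} and its corollary, realizing $(\extNH{n},d_N^{\Sigma})$ as the base change along $\symL{n}\hookrightarrow\NH{n}$ of a Koszul complex.  First I would apply Proposition~\ref{prop:structureOfNH} with the dual Schubert tuple $p_j=\dSp{w_0c[j]}=(-1)^{n-j}\hf_{n-j}(x_n)$ of \ref{sss:dual-schubert}, so that $\swv{j}=\dschwv{j}$; combined with Theorem~\ref{thm:ext-sym}(iii) this gives an isomorphism of graded algebras
\[
\extNH{n}\;\cong\;\NH{n}\otimes_{\symL{n}}\extL{n}\;\cong\;\NH{n}\otimes_{\IQ}\Wedge{\dschwv{1},\dots,\dschwv{n}}.
\]
Since $d_N^{\Sigma}$ annihilates $\NH{n}$ and is $\symL{n}$-linear, under this identification $d_N^{\Sigma}$ becomes $\id\otimes d_N^{\Sigma}$, so everything reduces to understanding the DG-algebra $(\extL{n},d_N^{\Sigma})$.

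Next I would compute $d_N^{\Sigma}$ on the dual Schubert generators $\dschwv{i}=\sum_{j=0}^{n-i}(-1)^j\hf_j(x_{i+j},\dots,x_n)\wv{i+j}$.  Using the Leibniz rule, $d_N^{\Sigma}(x_k)=0$, and the defining formula for $d_N^{\Sigma}(\wv{m})$, one should obtain
\[
d_N^{\Sigma}(\dschwv{i})=(-1)^{i+1}a_i,\qquad a_i:=\sum_{l=0}^{N-i+1}\kappa_l\,\hf_{N-i+1-l}(x_1,\dots,x_n)\in\symL{n},
\]
where the one nontrivial point is a $\kappa_l$-weighted linear combination (over $l$, with $N$ replaced by $N-l$) of the symmetric function identity of Lemma~\ref{lem:sym-ident}.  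This is the deformed analogue of Proposition~\ref{prop:dN-restricts}: it identifies $(\extL{n},d_N^{\Sigma})$, up to signs of the generators, with the Koszul complex $K(a_1,\dots,a_n)$ over $\symL{n}$.

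I would then establish two facts.  First, $(a_1,\dots,a_n)$ is a regular sequence in $\symL{n}$: since the $\kappa_l$ are scalars, the leading form of $a_i$ in the standard grading is $\hf_{N-i+1}(x_1,\dots,x_n)$, so the ideal of initial forms of $\langle a_1,\dots,a_n\rangle$ contains $\langle\hf_N,\dots,\hf_{N-n+1}\rangle$; as the latter is regular by \cite[Proposition~7.2]{Wu}, the quotient $\symL{n}/\langle a_1,\dots,a_n\rangle$ is finite dimensional, and $n$ elements cutting out a zero-dimensional quotient of the Cohen--Macaulay ring $\symL{n}$ (a polynomial ring in $n$ variables) form a regular sequence.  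Second, the two-sided ideal $\langle a_1,\dots,a_n\rangle_{\NH{n}}$ coincides with the deformed cyclotomic ideal $I^{\Sigma}_N$: for ``$\subseteq$'', each $a_i$ is a $\Pol{n}$-linear combination of the elements $d_N^{\Sigma}(\wv{m})=(-1)^{m+1}\sum_l\kappa_l\hf_{N-m+1-l}(x_1,\dots,x_m)$, each of which vanishes in $\NH{n}^{\Sigma}$ by Lemma~\ref{lem:deformed-ideal}(4); for ``$\supseteq$'', the element $d_N^{\Sigma}(\wv{1})=\sum_{j=0}^{N}\kappa_j x_1^{N-j}$ generates $I^{\Sigma}_N$, and inverting the unitriangular (over $\Pol{n}$) change of basis $\dschwv{j}=\wv{j}+\sum_{k>j}\dSp{w_0c[j,k]}\wv{k}$ writes each $d_N^{\Sigma}(\wv{i})$ as a $\Pol{n}$-linear combination of the $d_N^{\Sigma}(\dschwv{k})=(-1)^{k+1}a_k$.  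Finally, since $\NH{n}$ is free, hence faithfully flat, over $\symL{n}$ by Theorem~\ref{thm_Ea}, applying $\NH{n}\otimes_{\symL{n}}-$ to the quasi-isomorphism $K(a_1,\dots,a_n)\to\symL{n}/\langle a_1,\dots,a_n\rangle$ shows that the canonical projection $(\extNH{n},d_N^{\Sigma})\to\NH{n}/I^{\Sigma}_N=\NH{n}^{\Sigma}$ (the map killing every $\wv{i}$) is a quasi-isomorphism of DG-algebras.

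I expect the main obstacle to be the ideal identification $\langle a_1,\dots,a_n\rangle_{\NH{n}}=I^{\Sigma}_N$: it forces one to shuttle carefully between the ``raw'' odd generators $\wv{i}$, where the deformed cyclotomic relations of Lemma~\ref{lem:deformed-ideal} naturally live, and the ``symmetrized'' generators $\dschwv{i}\in\extL{n}$, where the Koszul and tensor-product structures live, and to check that the resulting two-sided ideals of $\NH{n}$ genuinely agree.  Everything else should be a routine adaptation of Proposition~\ref{prop:dN-restricts} and its corollary, together with a standard deformation argument for the regularity.
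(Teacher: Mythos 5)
Your argument is correct, and it takes a noticeably different (much more self-contained) route than the paper, whose entire proof is one sentence: the authors invoke Lemma~\ref{lem:deformed-ideal}, which shows that each $d_N^{\Sigma}(\wv{i})$ lies in the deformed cyclotomic ideal $I_N^{\Sigma}$, and leave the passage from this to the quasi-isomorphism implicit -- the reader is expected to rerun the undeformed argument of \cite[Proposition 8.3]{Vaz} with $x_1^N$ replaced by $\sum_j\kappa_j x_1^{N-j}$; strictly speaking the cited lemma only yields that the augmentation $\extNH{n}\to\NH{n}^{\Sigma}$ is a chain map, not the acyclicity in positive degrees. You instead make the whole mechanism explicit: via Proposition~\ref{prop:structureOfNH} with the dual Schubert tuple you identify $(\extNH{n},d_N^{\Sigma})$ with $\NH{n}\otimes_{\symL{n}}(\extL{n},d_N^{\Sigma})$, compute $d_N^{\Sigma}(\dschwv{i})=(-1)^{i+1}a_i$ by a $\kappa$-weighted version of Lemma~\ref{lem:sym-ident} (the deformed analogue of Proposition~\ref{prop:dN-restricts}), prove regularity of $(a_1,\dots,a_n)$ by the initial-form/Cohen--Macaulay argument, identify $\la a_1,\dots,a_n\ra\NH{n}=I_N^{\Sigma}$ using Lemma~\ref{lem:deformed-ideal}(4) in one direction and the unitriangularity of the $\dschwv{j}$ against the $\wv{j}$ in the other, and finish by flat base change along $\symL{n}\hookrightarrow\NH{n}$ (freeness from Theorem~\ref{thm_Ea}). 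What your route buys is exactly what the paper leaves implicit: the vanishing of higher cohomology is proved rather than assumed, $H^0$ is pinned down as $\NH{n}^{\Sigma}$ via the ideal identity, and the companion statement for $(\extL{n},d_N^{\Sigma})$ falls out without appealing to \cite[Lemma 11]{RoseW}; the cost is length. Two small points to tidy up when writing it out: the initial-form argument tacitly assumes $\symL{n}/\la a_1,\dots,a_n\ra\neq 0$ (if the $a_i$ generated the unit ideal, both sides of the asserted quasi-isomorphism would vanish, so nothing is lost, but say so), and since the $a_i$ are central you should note that left, right and two-sided ideals of $\NH{n}$ they generate coincide, so that $\NH{n}\otimes_{\symL{n}}\symL{n}/\la a_1,\dots,a_n\ra$ really is $\NH{n}/I_N^{\Sigma}$.
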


\begin{proof}
The statement follows immediately from Lemma~\ref{lem:deformed-ideal} which shows that $d_N(\wv{i})$ is in the ideal generated by $\sum_{j=0}^{N}\kappa_{j}x_1^{N-j}$.
\end{proof}

\begin{proposition}
For each $N>0$, the pair $(\extL{n},d_N^{\Sigma})$ is a DG-subalgebra of $(\extNH{n},d_N^{\Sigma})$.
\end{proposition}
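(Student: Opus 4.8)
The proof should proceed exactly as for the standard differential $d_N$: the only inputs needed are that $\extL{n}$ is the graded center of $\extNH{n}$ (Corollary~\ref{cor:ZNH}) and that $d_N^{\Sigma}$ annihilates the subalgebra $\NH{n}\subset\extNH{n}$. First I would recall the characterization, already invoked in the proof of the corresponding statement for $d_N$, that $\extL{n}=gZ(\extNH{n})$ consists precisely of those $z\in\extNH{n}$ with $[\partial_i,z]=0$ for all divided difference operators $\partial_i$, $1\le i\le n-1$. Next, observe that $d_N^{\Sigma}$ is linear over $\NH{n}$: by definition $d_N^{\Sigma}(x_i)=d_N^{\Sigma}(\partial_i)=0$, and since the $x_i$ and $\partial_i$ all lie in even homological degree, the graded Leibniz rule gives $d_N^{\Sigma}(ab)=a\,d_N^{\Sigma}(b)$ and $d_N^{\Sigma}(ba)=d_N^{\Sigma}(b)\,a$ for every $a\in\NH{n}$ and $b\in\extNH{n}$. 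In particular $d_N^{\Sigma}$ commutes with the operator $[\partial_i,-]$ on $\extNH{n}$.

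With these two facts the conclusion is immediate: if $z\in\extL{n}$ then $[\partial_i,z]=0$ for all $i$, hence $[\partial_i,d_N^{\Sigma}(z)]=d_N^{\Sigma}([\partial_i,z])=0$ for all $i$, so $d_N^{\Sigma}(z)\in\extL{n}$. Thus $d_N^{\Sigma}$ restricts to a degree $-1$ map on $\extL{n}$; combined with the fact that $\extL{n}$ is a subalgebra of $\extNH{n}$ (Corollary~\ref{cor:ZNH}) and that $d_N^{\Sigma}$ is already a square-zero derivation on $\extNH{n}$ (the preceding corollary), this exhibits $(\extL{n},d_N^{\Sigma})$ as a DG-subalgebra of $(\extNH{n},d_N^{\Sigma})$.

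As an alternative, more hands-on route — parallel to the proof of Proposition~\ref{prop:dN-restricts} — one can verify the claim on the dual-Schubert exterior basis $\dschwv{i}$ of~\ref{sss:dual-schubert}. A direct computation from the definition of $d_N^{\Sigma}$ yields, up to sign,
\[
d_N^{\Sigma}(\dschwv{i})=(-1)^{i+1}\sum_{j=0}^{N-i+1}\kappa_j\,\hf_{N-i+1-j}(x_1,\dots,x_n),
\]
where the passage from partial-variable complete symmetric functions to the full symmetric functions $\hf_m(x_1,\dots,x_n)$ is Lemma~\ref{lem:sym-ident} applied with $N$ replaced by $N-j$. Since the right-hand side is genuinely symmetric in all $n$ variables it lies in $\symL{n}\subset\extL{n}$, and $\symL{n}$-linearity of $d_N^{\Sigma}$ finishes the argument; this version additionally identifies $(\extL{n},d_N^{\Sigma})$ with a deformed Koszul-type complex, setting up the computation of its cohomology. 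I do not anticipate any real obstacle: all the substance was already packaged into the corollary stating that $d_N^{\Sigma}$ is a differential on $\extNH{n}$ and into Corollary~\ref{cor:ZNH}, and the only point requiring (the already-made) care is the description of the graded center as the common kernel of the operators $[\partial_i,-]$.
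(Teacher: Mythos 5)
Your argument is correct, but it is not the route the paper takes for this statement. The paper's own proof is a one-liner of a different flavor: since $d_N^{\Sigma}(\wv{i})=(-1)^{i+1}\sum_j\kappa_j \hf_{N-i+1-j}(x_1,\dots,x_i)$ is, up to sign, the $\kappa_j$-linear combination $-\sum_j\kappa_j d_{N-j}(\wv{i})$ of the values of the undeformed differentials, $d_N^{\Sigma}$ agrees on $\extNH{n}$ with a linear combination of the $d_{N-j}$, each of which has already been shown to preserve $\extL{n}$; so the restriction is immediate. What you do instead is re-run, verbatim for $d_N^{\Sigma}$, the centralizer argument the paper used for the undeformed $d_N$: the characterization of $\extL{n}$ as the set of $z$ with $[\dd{i},z]=0$, plus $\NH{n}$-linearity of the differential, gives $[\dd{i},d_N^{\Sigma}(z)]=d_N^{\Sigma}([\dd{i},z])=0$. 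This is perfectly valid and arguably more robust: it needs neither the earlier restriction result nor any basis of $\extL{n}$, and it proves in one stroke that \emph{any} $\NH{n}$-linear differential on $\extNH{n}$ restricts to the graded center, covering $d_N$ and $d_N^{\Sigma}$ uniformly. The paper's reduction, by contrast, is shorter given what precedes it and makes the deformation structure transparent. Your ``hands-on'' alternative, computing
\[
d_N^{\Sigma}(\dschwv{i})=(-1)^{i+1}\sum_{j\geq 0}\kappa_j\,\hf_{N-i+1-j}(x_1,\dots,x_n)
\]
via Lemma~\ref{lem:sym-ident} applied with $N$ replaced by $N-j$, is also correct and actually buys more than the proposition asks for: it identifies $(\extL{n},d_N^{\Sigma})$ as a Koszul-type complex on the deformed sequence $\sum_j\kappa_j\hf_{N-i+1-j}$, which is the input one wants for the subsequent quasi-isomorphism with $H^{\Sigma}_n$ (the paper instead outsources this step to \cite[Lemma 11]{RoseW} together with Proposition~\ref{prop:dN-restricts}).
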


\begin{proof}
This is immediate since the differential $d_N^{\Sigma}$ acting on $\hwv{i}$ can be expressed as a linear combination of undeformed differentials, each of which preserves the ring $\extL{n}$.
\end{proof}

\begin{theorem}
The DG-algebra $(\extL{n},d_N^{\Sigma})$ is quasi-isomorphic to the ring $H_n^{\Sigma}$ from Theorem~\ref{thm:RW}.
\end{theorem}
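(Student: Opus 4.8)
The plan is to rerun the proof of Proposition~\ref{prop:dN-restricts} and the corollary following it in the deformed setting, working over $\C$ throughout (this base change is forced by $\kappa_j\in\C$, and is harmless since a regular sequence stays regular under the flat extension $\Q\hookrightarrow\C$). First I would evaluate $d_N^\Sigma$ on the dual--Schubert exterior generators $\dschwv i=\sum_{k=0}^{n-i}(-1)^k\hf_k(x_{i+k},\ldots,x_n)\wv{i+k}\in\extL n$, using that Theorem~\ref{thm:ext-sym}(iii) gives $\extL n\ten_\Q\C\cong(\symL n\ten_\Q\C)\ten\Wedge{\dschwv 1,\ldots,\dschwv n}$. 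Inserting the defining formula for $d_N^\Sigma$ on each $\wv{i+k}$, the sign factors combine to $(-1)^{i+1}$ independently of $k$; exchanging the $k$--sum with the $\kappa_j$--sum and applying Lemma~\ref{lem:sym-ident} with $N$ replaced by $N-j$ for each fixed $j$ yields
\[
d_N^\Sigma(\dschwv i)=(-1)^{i+1}\,\tilde{\hf}_{N-i+1},
\qquad
\tilde{\hf}_m:=\sum_{j=0}^{N}\kappa_j\,\hf_{m-j}(x_1,\ldots,x_n)
\]
(with $\kappa_0=1$ and $\hf_m=0$ for $m<0$); the same follows from the observation that on generators $d_N^\Sigma=-\sum_{j=0}^N\kappa_j\,d_{N-j}$ together with Proposition~\ref{prop:dN-restricts}. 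Hence $(\extL n\ten_\Q\C,\,d_N^\Sigma)$ is isomorphic as a DG-algebra to the Koszul complex over $\symL n\ten_\Q\C$ associated to the sequence $\tilde{\hf}_{N-n+1},\ldots,\tilde{\hf}_N$, the unit signs and the ordering of generators being immaterial up to isomorphism.

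The one step requiring a genuine argument is that $\tilde{\hf}_{N-n+1},\ldots,\tilde{\hf}_N$ is a regular sequence in $\symL n\ten_\Q\C\cong\C[e_1,\ldots,e_n]$. Grade $\symL n$ by $\deg x_i=1$; since each $\kappa_j$ has degree $0$, the top-degree component of $\tilde{\hf}_m$ is $\hf_m$. Filtering $Q:=\symL n\ten_\Q\C/\langle\tilde{\hf}_{N-n+1},\ldots,\tilde{\hf}_N\rangle$ by degree, the associated graded ring receives a surjection from $\symL n\ten_\Q\C$ whose kernel contains the leading forms $\hf_{N-n+1},\ldots,\hf_N$, so $\mathrm{gr}\,Q$ is a quotient of $\C[e_1,\ldots,e_n]/\langle\hf_{N-n+1},\ldots,\hf_N\rangle$. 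The latter is finite-dimensional over $\C$ because $\hf_N,\hf_{N-1},\ldots,\hf_{N-n+1}$ is a regular sequence (cited in the undeformed case from \cite[Proposition~7.2]{Wu}); therefore $Q$ is finite-dimensional, and since $\symL n\ten_\Q\C$ is a Cohen--Macaulay ring of Krull dimension $n$, the $n$ elements $\tilde{\hf}_{N-n+1},\ldots,\tilde{\hf}_N$ (which then generate a height-$n$ ideal) form a regular sequence.

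Granting regularity, the Koszul complex is a resolution of $Q$, so $(\extL n\ten_\Q\C,\,d_N^\Sigma)$ is quasi-isomorphic to $Q=\symL n\ten_\Q\C/\langle\tilde{\hf}_{N-n+1},\ldots,\tilde{\hf}_N\rangle$. Finally I would identify $Q$ with $H_n^\Sigma$: this quotient is exactly the Borel presentation of the $GL(N)$--equivariant cohomology $H^\ast_{GL(N)}(Gr(n,N))$ with equivariant parameters specialized to $\und\kappa$, that is, the ring $H_n^\Sigma$ of \cite[Section~3.2]{RoseW}; equivalently one recognizes $Q\cong Z(\NH n^\Sigma)\cong H_n^\Sigma$ via the relations of Lemma~\ref{lem:deformed-ideal}, Corollary~\ref{cor:ZNH}, and the matrix description in Proposition~\ref{prop:structureOfNH}. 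Feeding the isomorphism $Q\cong H_n^\Sigma$ through Theorem~\ref{thm:RW} then produces the explicit tensor-product decomposition on the right-hand side. The crux is the regularity of the deformed sequence; beyond that, the argument is just the Koszul-complex formalism together with the standard presentation of equivariant cohomology.
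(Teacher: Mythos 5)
Your proposal is correct and follows essentially the same route as the paper, whose proof simply combines Proposition~\ref{prop:dN-restricts} (identifying the complex as a Koszul complex, via the observation that $d_N^{\Sigma}$ is a $\kappa$-linear combination of the undeformed differentials $d_{N-j}$) with the citation of \cite[Lemma 11]{RoseW} for the deformed sequence and its quotient. The only difference is that you make the cited ingredient self-contained: your leading-term argument (the top-degree part of $\tilde{\hf}_m$ is $\hf_m$, so the quotient is finite-dimensional, hence the ideal has height $n$ in the Cohen--Macaulay ring $\symL{n}\ten\C$ and the Koszul complex is acyclic in positive degrees) correctly replaces the regularity statement, and the identification of the quotient with $H_n^{\Sigma}$ via the Borel presentation is exactly what \cite[Section 3.2]{RoseW} supplies.
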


\begin{proof}
This follows from \cite[Lemma 11]{RoseW} and Proposition~\ref{prop:dN-restricts}.
\end{proof}

\subsection{Categorification}
\label{ss:cat}

Let $\mathbf{f}$ denote the positive part $\mathbf{U}^+(\mf{sl}_2)$ of the quantized universal enveloping algebra of $\mf{sl}_2$.  This $\mathbb{Q}(q)$-algebra is a polynomial ring in the generator $E$.  This algebra is $\N$-graded with $E$ in degree 2.  We equip the tensor product $\mathbf{f} \otimes \mathbf{f}$ with the twisted algebra structure
\[
(E^{a} \otimes E^b) (E^c \otimes E^d) = q^{-2cd} E^aE^c \otimes E^bE^d.
\]
The algebra $\mathbf{f}$ contains a subring $_{\cal{A}}\mathbf{f}$ which is the $\Z[q,q^{-1}]$-lattice generated by all products of quantum divided powers
\begin{equation} \label{eq:divided-power}
 E^{(n)} := \frac{E^n}{[n]!}.
\end{equation}
Hence, a categorification of $_{\cal{A}}\mathbf{f}$ amounts to identifying objects $\cal{E}^{(n)}$ and $\cal{E}^n$ in a graded category and lifting the divided power relation~\eqref{eq:divided-power} to an explicit isomorphism
\begin{equation}
  \cal{E}^n \cong \bigoplus_{[n]!} \cal{E}^{(n)} = \cal{E}^{(n)}\la n-1\ra \oplus \cal{E}^{(n)}\la n-3\ra \oplus \cdots \oplus \cal{E}^{(n)}\la 1-n\ra.
\end{equation}

The results from the previous section allow us to define a categorification of $_{\cal{A}}\mathbf{f}$.
Consider the graded ring
\[
\extNH{}:= \bigoplus_{n \geq 0} \extNH{n},
\]
and denote by $\extNH{}\smod$ the category of projective graded $\extNH{}$-supermodules.  Recall from Proposition \ref{prop:structureOfNH} the isomorphism $\extNH{n} \cong {\rm Mat}((n)^!_{q^2}, \extL{n})$.  One can easily show that $e_n = \und{x}^{\delta}\partial_{w_0}$ is the minimal idempotent projecting onto the lowest degree column of $\extNH{n}$.
The graded module $\extNH{n}e_n$ is the unique indecomposable projective $\extNH{n}$ up to isomorphism and grading shift.  The regular representation then decomposes into $n!$ isomorphic copies of $\extNH{n}e_n$.  Taking gradings into account, if we define
\[
\cal{E}^{(n)} := \extNH{n}e_n\la -n(n-1)/2\ra, \qquad \cal{E}^{n} := \extNH{n},
\]
then we have an isomorphism of projective left supermodules
\[
\cal{E}^n := \extNH{n} \cong \bigoplus_{[n]!} \extNH{n}e_n =: \bigoplus_{[n]!}\cal{E}^{(n)} .
\]
Hence, we have proven the following.
\begin{proposition}
There is an isomorphism of $\cal{A}$-modules
\begin{equation}
 \gamma \maps  _{\cal{A}}\mathbf{f} \to K_0(\extNH{})
\end{equation}
sending $E^{(n)}$ to the class of the indecomposable projective module $\cal{E}^{(n)}$.
\end{proposition}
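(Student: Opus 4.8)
The plan is to deduce the proposition from the structural facts already established, the crucial inputs being Proposition~\ref{prop:structureOfNH} and the decomposition $\cal{E}^n\cong\bigoplus_{[n]!}\cal{E}^{(n)}$ recorded just above. First I would compute $K_0(\extNH{})$ summand by summand. Since $\extNH{}=\bigoplus_{n\ge0}\extNH{n}$, the category of (locally unital) projective graded $\extNH{}$-supermodules splits as $\bigoplus_{n\ge0}\extNH{n}\smod$, so $K_0(\extNH{})\cong\bigoplus_{n\ge0}K_0(\extNH{n}\smod)$ as modules over $\cal{A}=\Z[q,q^{-1}]$, with $q$ recording the internal grading shift. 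Fix $n$. By Proposition~\ref{prop:structureOfNH} one has $\extNH{n}\cong{\rm Mat}((n)^!_{q^2},\extL{n})$, which is graded Morita equivalent to $\extL{n}$; hence finitely generated projective graded $\extNH{n}$-supermodules correspond to finitely generated projective graded $\extL{n}$-supermodules. By Theorem~\ref{thm:ext-sym}(iii), $\extL{n}\cong\symL{n}\ten\Wedge{\swv{1},\dots,\swv{n}}$ is connected $\N$-graded with degree-zero part $\IQ$, hence graded local, so by graded Nakayama every such projective $\extL{n}$-supermodule is graded free. Therefore $\extNH{n}\smod$ is Krull--Schmidt with exactly one indecomposable object up to isomorphism and grading shift; by the preceding discussion this object is $\cal{E}^{(n)}=\extNH{n}e_n\la-n(n-1)/2\ra$, with $e_n=\und{x}^{\delta}\partial_{w_0}$ a primitive idempotent (it is idempotent since $\partial_{w_0}\und{x}^{\delta}\partial_{w_0}=\partial_{w_0}$ by $\symL{n}$-linearity of $\partial_{w_0}$, and primitive since $\extNH{n}e_n$ corresponds under the Morita equivalence to the regular $\extL{n}$-module, which is indecomposable). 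Consequently $K_0(\extNH{n}\smod)$ is free of rank one over $\cal{A}$ on $[\cal{E}^{(n)}]$, and $K_0(\extNH{})$ is free over $\cal{A}$ with basis $\{[\cal{E}^{(n)}]\mid n\ge0\}$.

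Next I would match this with ${}_{\cal{A}}\mathbf{f}$. Since $\mathbf{f}=\IQ(q)[E]$ and a product of divided powers $E^{(a)}E^{(b)}$ is a Gaussian-binomial-coefficient multiple of $E^{(a+b)}$, with coefficient in $\cal{A}$, the integral form ${}_{\cal{A}}\mathbf{f}$ is free over $\cal{A}$ with basis $\{E^{(n)}\mid n\ge0\}$. Thus $\gamma(E^{(n)}):=[\cal{E}^{(n)}]$ is a bijection between $\cal{A}$-bases and extends uniquely to an isomorphism of $\cal{A}$-modules ${}_{\cal{A}}\mathbf{f}\to K_0(\extNH{})$, which is the asserted map. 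I would also note, for context, that $\cal{E}^n\cong\bigoplus_{[n]!}\cal{E}^{(n)}$ gives $[\cal{E}^n]=[n]!\,[\cal{E}^{(n)}]$, so if $K_0(\extNH{})$ is equipped with the product induced by the induction bifunctors $\extNH{m}\smod\times\extNH{n}\smod\to\extNH{m+n}\smod$ then $\gamma$ upgrades to an isomorphism of twisted bialgebras onto $\mathbf{f}$ with its twisted multiplication; this is not needed for the statement as phrased, but it is the content motivating the surrounding discussion.

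The only points requiring care, rather than new ideas, are the following. First, the primitivity of $e_n$ and the uniqueness of the indecomposable projective both reduce to $\extL{n}$ being connected $\N$-graded, which is exactly Theorem~\ref{thm:ext-sym}(iii): in the grading relevant for categorification the generators $\swv{j}$ have positive degree, so $\extL{n}$ has no nontrivial homogeneous idempotents. Second, one must keep track of grading shifts so that the multiplicity in $\cal{E}^n\cong\bigoplus_{[n]!}\cal{E}^{(n)}$ is precisely $[n]!$, with the symmetric shifts $\la n-1\ra,\dots,\la1-n\ra$; this bookkeeping is already encoded in the identity $(n)^!_{q^2}=q^{n(n-1)/2}[n]!$. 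Neither of these is a genuine obstacle: once Proposition~\ref{prop:structureOfNH} and the decomposition of $\cal{E}^n$ are in place, the present proposition is essentially a repackaging.
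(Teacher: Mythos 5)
Your proposal is correct and is essentially the paper's own argument, which is contained in the discussion immediately preceding the proposition: Proposition~\ref{prop:structureOfNH} gives $\extNH{n}\cong{\rm Mat}((n)^!_{q^2},\extL{n})$, whence $\extNH{n}e_n$ is the unique indecomposable graded projective up to shift and $\cal{E}^n\cong\bigoplus_{[n]!}\cal{E}^{(n)}$, so that $K_0(\extNH{})$ is free over $\cal{A}$ on the classes $[\cal{E}^{(n)}]$, matching the basis $\{E^{(n)}\}$ of ${}_{\cal{A}}\mathbf{f}$. One small correction: with the paper's conventions $\deg(\swv{j})=-2j-2<0$, so $\extL{n}$ is not literally connected $\N$-graded as you assert, but your conclusion (graded locality, hence no nontrivial homogeneous idempotents and graded-freeness of projectives) still holds because $\extL{n}\cong\symL{n}\ten\Wedge{\swv{1},\dots,\swv{n}}$ with the exterior generators nilpotent and $\symL{n}$ connected and non-negatively graded.
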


There are inclusions of graded super-rings
\begin{equation}
  \iota_{n,m} \maps \extNH{n} \otimes \extNH{m} \to \extNH{n+m}
\end{equation}
given diagrammatically by placing diagrams side-by-side with those in $\extNH{n}$ appearing above $\extNH{m}$.   These inclusions give rise to induction and restriction functors
\begin{align}
 \Ind_{n,m} &\maps \left(\extNH{n} \otimes \extNH{m}\right)\smod \to \extNH{n+m}\smod, \nn\\
  \Res_{n,m} &\maps \extNH{n+m}\smod\to \left(\extNH{n} \otimes \extNH{m}\right)\smod, \nn
\end{align}
By the basis theorem ~\ref{thm:ext-sym} for $\extNH{n+m}$ it follows that the super module $\extNH{n+m}$ is a free graded left super $\extNH{n} \otimes \extNH{m}$-module.  A basis is given by the crossing diagrams in $\extNH{n+m}$ corresponding to the minimal representative of a left $\Sgp_n \times S_m$-coset in $S_{n+m}$, see for example \cite[Proposition 2.16]{KL1}.  It follows that $\Res_{n,m}$ takes projectives to projectives, and therefore descends to a map in the Grothendieck group.  Similarly, by a version of the Mackey induction-restriction theorem it follows that $\Ind_{n,m}$ also sends projectives to projectives.

Summing over all $n,m \in \Z_{\geq 0}$ these functors induce maps
\begin{align}
 [\Ind] &\maps K_0\left(\extNH{}\right) \otimes K_0\left(\extNH{}\right) \to K_0\left(\extNH{}\right), \nn\\
  [\Res] &\maps K_0\left(\extNH{}\right)\to K_0\left(\extNH{}\right) \otimes K_0\left(\extNH{}\right), \nn
\end{align}
Just as in the case of the nilHecke algebra, see \cite{KL1}, induction and restriction equip $_{\cal{A}}\mathbf{f}$ with the structure of a twisted  bialgebra and we have the following result.

\begin{theorem}
The isomorphism
\begin{equation}
 \gamma \maps  _{\cal{A}}\mathbf{f} \to K_0(\extNH{})
\end{equation}
 is an isomorphism of twisted bialgebras.
\end{theorem}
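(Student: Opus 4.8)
The plan is to promote the isomorphism of $\cal{A}$--modules $\gamma\maps {}_{\cal{A}}\mathbf{f}\to K_0(\extNH{})$ constructed above to an isomorphism of twisted bialgebras, following the template of the nilHecke case in \cite{KL1}. Since $\gamma$ is already bijective and carries the $\cal{A}$--basis $\{E^{(n)}\}_{n\geq 0}$ onto $\{[\cal{E}^{(n)}]\}_{n\geq 0}$, it suffices to verify three things: that $\gamma$ intertwines the product of ${}_{\cal{A}}\mathbf{f}$ with $[\Ind]$, that it intertwines the twisted coproduct $r$ with $[\Res]$, and that it respects units and counits. Recall that in the basis $\{E^{(n)}\}$ the product is $E^{(n)}E^{(m)}=\tfrac{[n+m]!}{[n]!\,[m]!}\,E^{(n+m)}$ and the twisted coproduct has the shape $r(E^{(k)})=\sum_{n+m=k}q^{-nm}\,E^{(n)}\otimes E^{(m)}$, valued in the twisted product ring ${}_{\cal{A}}\mathbf{f}\otimes{}_{\cal{A}}\mathbf{f}$ described above (the precise power of $q$ being exactly what that twist dictates).

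The single ingredient genuinely beyond \cite{KL1} is Proposition \ref{prop:structureOfNH}, which presents $\extNH{n}$ as the matrix algebra ${\operatorname{Mat}}\big((n)^!_{q^2},\extL{n}\big)$ over $\extL{n}\cong\symL{n}\ten\Wedge{\swv{1},\dots,\swv{n}}$. The ring $\extL{n}$ is graded-local, being the tensor product of the graded-local ring $\symL{n}$ with an exterior algebra; hence the category of graded projective $\extNH{n}$--supermodules is Krull--Schmidt with a unique indecomposable object up to grading shift, namely $\cal{E}^{(n)}$. In particular $K_0$ of that category is a free $\cal{A}$--module of rank one on $[\cal{E}^{(n)}]$, the graded multiplicity of $\cal{E}^{(n)}$ in any graded projective lies in $\cal{A}$, and $\extNH{n}\cong\bigoplus_{[n]!}\cal{E}^{(n)}$, as recalled above. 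With this in hand, every decomposition claim below is forced by a comparison of graded ranks over $\cal{A}$, just as in \cite{KL1}; recall also that $[\Ind]$ and $[\Res]$ are well defined because $\Ind$ and $\Res$ preserve projectives, as noted above.

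For the product: $\cal{E}^{(n)}\boxtimes\cal{E}^{(m)}$ is the unique indecomposable projective over $\extNH{n}\ten\extNH{m}$, and $\extNH{n}\ten\extNH{m}\cong\bigoplus_{[n]!\,[m]!}\cal{E}^{(n)}\boxtimes\cal{E}^{(m)}$. Applying $\Ind_{n,m}$ to this regular representation returns $\extNH{n+m}$, so in $K_0$ one has $[n]!\,[m]!\cdot[\Ind_{n,m}(\cal{E}^{(n)}\boxtimes\cal{E}^{(m)})]=[\extNH{n+m}]=[n+m]!\cdot[\cal{E}^{(n+m)}]$; since $\cal{A}$ is a domain in which $[n]!\,[m]!$ divides $[n+m]!$, this forces $[\Ind_{n,m}(\cal{E}^{(n)}\boxtimes\cal{E}^{(m)})]=\tfrac{[n+m]!}{[n]!\,[m]!}\,[\cal{E}^{(n+m)}]$, so $[\Ind]$ realizes the product of ${}_{\cal{A}}\mathbf{f}$ (and Krull--Schmidt upgrades this to an honest module isomorphism $\Ind_{n,m}(\cal{E}^{(n)}\boxtimes\cal{E}^{(m)})\cong\bigoplus_{\tfrac{[n+m]!}{[n]!\,[m]!}}\cal{E}^{(n+m)}$). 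For the coproduct I would use that $\extNH{n+m}$ is free as a \emph{left} $\extNH{n}\ten\extNH{m}$--module on the crossing diagrams indexed by the minimal length representatives of the cosets in $\Sgp_{n+m}/(\Sgp_n\times\Sgp_m)$, so its graded rank over $\extNH{n}\ten\extNH{m}$ is the Poincar\'e polynomial of those representatives, namely $q^{-nm}\tfrac{[n+m]!}{[n]!\,[m]!}$. The same comparison of graded ranks then yields $[\Res_{n,m}(\cal{E}^{(n+m)})]=q^{-nm}\,[\cal{E}^{(n)}\boxtimes\cal{E}^{(m)}]$, hence $[\Res]([\cal{E}^{(k)}])=\sum_{n+m=k}q^{-nm}\,[\cal{E}^{(n)}]\otimes[\cal{E}^{(m)}]=r(E^{(k)})$, the factor $q^{-nm}$ arising precisely as the grading shift carried by that free basis.

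It remains to match units, counits and the higher coherences, and this is formal. The unit $1=E^{(0)}$ corresponds to $[\cal{E}^{(0)}]=[\extNH{0}]=[\Q]$, the unit of $K_0(\extNH{})$; the counit is the projection $K_0(\extNH{})\to K_0(\extNH{0})=\cal{A}$ onto the $n=0$ summand; and associativity, coassociativity and the product/coproduct compatibility hold on the $K_0$ side by transitivity of induction, transitivity of restriction, and the Mackey induction-restriction theorem, verbatim as in \cite{KL1}. Combined with the bijectivity of $\gamma$, this gives the claimed isomorphism of twisted bialgebras. I expect the main obstacle to be purely a matter of grading bookkeeping, namely pinning down the exponent and sign of the single twisting factor $q^{-nm}$ so that it agrees with the twisted product described above; conceptually nothing new is needed once Proposition \ref{prop:structureOfNH} supplies the Krull--Schmidt structure that makes all of these rank comparisons meaningful.
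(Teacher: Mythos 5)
Your proposal is correct and takes essentially the same route as the paper, which itself only sketches the argument by deferring to the nilHecke case of \cite{KL1}: the matrix-ring presentation of Proposition \ref{prop:structureOfNH} supplies the unique indecomposable projective $\cal{E}^{(n)}$ and the Krull--Schmidt/rank-one $K_0$ statements, freeness of $\extNH{n+m}$ over $\extNH{n}\otimes\extNH{m}$ on minimal coset representatives gives the coproduct, and Mackey plus transitivity of induction/restriction give the remaining compatibilities. The only point left implicit in both treatments is the grading bookkeeping pinning down the twisting factor $q^{-nm}$, which you correctly identify as a matter of conventions rather than substance.
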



%

%

%

\bibliographystyle{amsalpha}
\bibliography{bib_extend}
%
%

\end{document}